\newtheorem{theorem}{Theorem}[section]
\newtheorem{lemma}[theorem]{Lemma}
\theoremstyle{definition}
\newtheorem{remark}[theorem]{Remark}
\newtheorem{definition}[theorem]{Definition}
\newcommand{\Zomega}{\mathbb{Z}[\omega]}
\newcommand{\Zbeta}{\mathbb{Z}[\beta]}
\newcommand{\NN}{\mathbb{N}}
\newcommand{\ZZ}{\mathbb{Z}}
\newcommand{\QQ}{\mathbb{Q}}
\newcommand{\RR}{\mathbb{R}}
\newcommand{\CC}{\mathbb{C}}
\newcommand{\A}{\mathcal{A}}
\newcommand{\B}{\mathcal{B}}
\newcommand{\C}{\mathcal{C}}
\newcommand{\D}{\mathcal{D}}
\newcommand{\Q}{\mathcal{Q}}
\newcommand{\R}{\mathcal{R}}
\newcommand{\Qw}[3][w]{\Q_{[#1_{-#2}, \dots, #1_{-#3}]}}
\newcommand{\Qwo}[2][w]{\Q_{[#1_{0}, \dots, #1_{-#2}]}}
\newcommand{\tuple}[3][w]{(#1_{-#2}, \dots, #1_{-#3})}
\newcommand{\tupleo}[2][w]{(#1_{0}, \dots, #1_{-#2})}
\newcommand{\vect}[1]{({#1}_0,{#1}_1,\cdots,{#1}_{d-1})^T}
\newcommand{\decdot}{\raisebox{0.1ex}{\textbf{.}}}
\newcommand{\Qb}[1]{\mathcal{Q}_{[\scriptstyle b]}^{\scriptstyle #1}}
\newcommand{\fin}[1]{\text{Fin}_{#1}(\beta)}
\newcommand{\vertiii}[1]{{\left\vert\kern-0.25ex\left\vert\kern-0.25ex\left\vert #1\right\vert\kern-0.25ex\right\vert\kern-0.25ex\right\vert}}
\newcommand{\norm}[2]{\left\lVert#1\right\rVert_{#2}}
\newcommand{\Mnorm}[2]{\vertiii{#1}_{#2}}
\newcommand{\normBeta}[1]{\norm{#1}{\beta}}
\title{Construction of Algorithms for Parallel Addition}
\author{Jan Legersk\'y\affiliationmark{1,2} \and Milena Svobodov\'a\affiliationmark{2}}
\affiliation{Research Institute for Symbolic Computation, Johannes Kepler University Linz, Austria \\
	Faculty of Nuclear Sciences and Physical Engineering, Czech Technical University in Prague, Czech Republic}
\keywords{complex numeration system, parallel addition, local function, expanding base, minimal alphabet}
\begin{document}
\publicationdetails{VOL}{2018}{ISS}{NUM}{SUBM}
\maketitle
\begin{abstract}
An algebraic number $\beta \in \CC$ with no conjugate of modulus~$1$ can serve as the base of a numeration system $(\beta, \A)$ with parallel addition, i.e., the sum of two operands represented in base $\beta$ with digits from~$\A$ is calculated in constant time, irrespective of the length of the operands. In order to allow parallel addition, sufficient level of redundancy must be given to the alphabet~$\A$. The complexity of parallel addition algorithm depends heavily on the size $\#\A$ of the alphabet: the bigger alphabet is considered, the lower complexity of the parallel addition algorithm may be reached, and vice versa.\\
Here we aim to find parallel addition algorithms on alphabets of the minimal possible size, for a given base. As the complexity of these algorithms becomes quite huge in general, we introduce a so-called {\it Extending Window Method} (EWM) -- in fact an algorithm to construct parallel addition algorithms. This method can be applied on bases~$\beta$ which are expanding algebraic integers, i.e., $\beta$~whose all conjugates are greater than~$1$ in modulus. Convergence of the EWM is not guaranteed, nevertheless, we have developed tools for revealing non-convergence, and there is a~number of successful applications. Firstly, the EWM provides the same parallel addition algorithms as were previously introduced by A.~Avizienis, C.Y.~Chow \& J.E.~Robertson or B.~Parhami for integer bases and alphabets. Then, by applying the EWM on selected complex bases~$\beta$ with non-integer alphabets $\A \subset \Zbeta$, we obtain new results -- parallel addition algorithms on alphabets of minimal possible size, which could not be found so far (manually). The EWM is helpful also in the case of block parallel addition.
\end{abstract}

\section{Introduction}\label{sec:introduction}

Numeration systems $(\beta, \A)$ are defined by a base~$\beta$ (sometimes also called a radix) and an alphabet~$\A$ -- the set of digits. Usually, the base is greater than~$1$ in modulus: $|\beta| > 1$. A number~$x$ is said to have a representation in the numeration system $(\beta, \A)$, if there exist digits $x_j$ from the alphabet $\A$ such that $x = \sum_{j=r}^l x_j \beta^j$, with $l \in \ZZ$ and $r \in (\ZZ \cup \{-\infty\})$. The $(\beta, \A)$-representation of~$x$ is then denoted by the string $(x_l \cdots x_0 \decdot x_{-1} \cdots)_{\beta}$, and it may not be unique.\\

Our goal is to find those numeration systems $(\beta, \A)$ that allow parallel addition, i.e., where the operation of adding two numbers having $(\beta, \A)$-representations has constant time complexity, assuming that arbitrary number of operations can be run in parallel. This concept was introduced by A.~Avizienis in~\cite{Avizienis}, and then was intensively elaborated and brought into practice -- not just for the sake of the addition operation itself, but also as part of other calculations (e.g. fast algorithms for multiplication or division). In practice, we actually consider only summands with finite $(\beta, \A)$-representations as the operands of addition algorithms.\\

When approaching the problem of parallel addition, we start by identifying the bases~$\beta$ for which parallel addition is possible at all, in a broad scope -- so not only integer, but also real and complex bases. The requirement to allow parallel addition forces the base to be an algebraic number (when considering $\A \subset \ZZ$ or $\A \subset \Zbeta$). Existence of parallel addition algorithm for a given base depends on the fact whether any of its algebraic conjugates is of modulus equal to~$1$ -- if not, then parallel addition algorithms exist for that base (with a suitably chosen alphabet~$\A$). But if $\beta$~has any algebraic conjugate equal to~$1$ in modulus, then parallel addition in base~$\beta$ is impossible.\\

In the second step, given a base~$\beta$ suitable for parallel addition, we focus on the alphabet~$\A$. In principle, the more elements in the alphabet, the better chances for the numeration system $(\beta, \A)$ to allow parallel addition -- as the redundancy increases, and it is the redundancy that makes parallel addition possible. In~\cite{FrPeSv}, a constructive method is provided, describing how to choose the alphabet~$\A$ (of consecutive integers, containing~$0$, and symmetric), and then how to perform parallel addition in thus obtained numeration system $(\beta, \A)$. The approach used there consists in finding a~$(\beta, \A)$-representation of zero with one dominant digit, i.e., strictly greater than the sum of moduli of all the other digits. Such representation of zero then serves as the core rewriting rule within the parallel addition algorithm, and also determines the size of the alphabet $\A \subset \ZZ$.\\

The drawback of this approach is that the alphabets~$\A$ are very large, and so is often also the carry propagation (due to the length of the rewriting rule). Too much redundancy may be inconvenient for other operations -- e.g. division or comparison. Therefore, our goal is to find alphabets of minimal size allowing parallel addition. For~$\beta$ being an algebraic integer, lower bounds on the cardinality of integer alphabets~$\A$ for parallel addition were specified in~\cite{FrPeSv2}; and, for several classes of the bases~$\beta$, the parallel addition algorithms using (integer) alphabets of the minimal size were actually found (\cite{FrPeSv2, FrHePeSv}).\\

Although the alphabet~$\A$ is most often selected as a subset of integers, the problem is relevant also in a more general setting, namely considering $\A \subset \ZZ[\beta]$. Some of the results and conditions on the alphabet~$\A$ to enable parallel addition, obtained earlier for integer alphabets, were generalized in~\cite{JLe-article} also to $\A \subset \Zbeta$. Especially the lower bound on minimal cardinality of~$\A$ in the case of $\A[\beta] = \Zbeta$. It is proven that the alphabet must contain all congruence classes modulo~$\beta$ in~$\ZZ[\beta]$, and also all congruence classes modulo $\beta-1$ in~$\ZZ[\beta]$. This is already quite an important requirement to fulfil, but still leaving freedom when selecting the digits to compose the set~$\A$. For various reasons, such as decreasing the computing demands for operations in $(\beta, \A)$, or to enable (on-line) division, we may propose the alphabet~$\A$ to be symmetric, or to contain the smallest possible digits in modulus.\\

In the sequel, we first recall the basic terminology and known results about parallel addition, in Section~\ref{sec:preliminaries}. Then, in Section~\ref{sec:addition}, we stress the difference between standard and parallel addition, and fix the notation to be used within the algorithms later. Once having a hypothesis, namely a set~$\A \subset \ZZ[\beta]$ as the candidate for $(\beta, \A)$ to become a numeration system with parallel addition, we use a~so-called Extending Window Method (EWM), which tries, in an automated and systematic way, to derive an algorithm of parallel addition for this numeration system $(\beta, \A)$. This idea of the EWM (a new result), together with schemes of its main algorithms are presented in Section~\ref{sec:EWM-description}.\\

Usage of the Extending Window Method is limited to bases~$\beta \in \CC$ being algebraic integers, and at the same time also \emph{expanding}, i.e., their conjugates must be all greater than~$1$ in modulus. Convergence of the EWM is guaranteed only for its first phase (generation of the weight coefficients set); whereas for the second phase (assignment of the weight coefficients to all necessary combinations of input digits), we developed tools for revealing non-convergence. This is explained in Section~\ref{sec:EWM-convergence}. Nevertheless, despite these limitations, the EWM is a valuable tool in the search for parallel addition algorithms, because the attempts to derive the algorithms manually are very laborious -- in fact close to impossible -- already for quite small alphabets (e.g. with less than 10 digits).\\

Section~\ref{sec:EWM-implementation} discusses generalization of EWM to bases and alphabets in $\Zomega$ for an algebraic integer $\omega$. It also summarizes the basic information about the actual implementation of the EWM done in SageMath.\\

Although the EWM does not guarantee finding the parallel addition algorithms in all cases where they exist, it brings a significant number of successful results; especially when considering non-integer alphabets $\A \subset \ZZ[\beta]$, and also for block parallel addition. In Section~\ref{sec:EWM-results}, we first show that in many (but not all) cases, the EWM provides the same parallel addition algorithms as derived earlier manually. Then, we give examples of new results -- parallel addition algorithms obtained only via EWM.\\

\section{Preliminaries and Known Results}\label{sec:preliminaries}

\subsection{Concept of Parallel Addition}\label{sub-sec:parallel-addition}

The idea of parallel addition has been formalized using the notion of so-called \emph{$p$-local function}:

\begin{definition}\label{def:plocal}
Let $\A$ and $\B$ be alphabets.
A \emph{function} $\varphi:\B^\ZZ \rightarrow \A^\ZZ$ is said to be \emph{$p$-local}
if there exist $r,t\in\NN$ satisfying $p=r+t+1$ and a function $\phi: \B^p \rightarrow \A$ such that,
for any $w=(w_j)_{j\in\ZZ}\in\B^\ZZ$ and its image $z=\varphi(w)=(z_j)_{j\in\ZZ}\in\A^\ZZ$,
we have $z_j=\phi(w_{j+t},\dots,w_{j-r})$ for every $j\in\ZZ$.
The parameters~$t$ and $r$ are called \emph{anticipation} and \emph{memory}, respectively.
\end{definition}

This means that a window of length $p$ computes the digit $z_j$ of the image $\varphi(w)$ from $(w_{j+t},\dots,w_{j-r})$, the digit $z_{j-1}$ from digits $(w_{j+t-1},\dots,w_{j-r-1})$ etc.\\

Since two $(\beta,\A)$-representations may be easily summed up digit-wise in parallel, the crucial point of parallel addition is conversion of a $(\beta,\A+\A)$-representation of the sum to a $(\beta,\A)$-representation. The notion of $p$-local function is applied to this conversion.

\begin{definition}\label{def:digitSetConversion}
Let $\beta$ be a base and let $\A$, $\B$ be alphabets containing~$0$. A function $\varphi:\B^\ZZ\rightarrow \A^\ZZ$ such that
    \begin{enumerate}[i)]
        \item for any $w=(w_j)_{j\in\ZZ}\in\B^\ZZ$ with finitely many non-zero digits, $z=\varphi(w)=(z_j)_{j\in\ZZ}\in\A^\ZZ$ has only finite number of non-zero digits, and
        \item $\sum_{j\in\ZZ} w_j \beta^j= \sum_{j\in\ZZ} z_j \beta^j$
    \end{enumerate}
is called \emph{digit set conversion} in the base $\beta$ from $\B$ to $\A$. Such a conversion $\varphi$ is said to be \emph{computable in parallel} if $\varphi$ is a $p$-local function for some $p\in\NN$. \emph{Parallel addition} in a numeration system $(\beta, \A)$ is a digit set conversion in base~$\beta$ from~$\A+\A$ to~$\A$ which is computable in parallel.
\end{definition}

\subsection{Parallel Addition Algorithms in Minimal Alphabets - Known Results}\label{sub-sec:minimal-alphabets}

Naturally, the first class of bases where the concept of parallel addition was introduced and studied are integers. In~\cite{Avizienis}, A.~Avizienis gave an algorithm for parallel addition in numeration systems with positive integer bases $\beta \in \NN$, $\beta \geq 3$ and with (symmetric) integer alphabets $\A = \{-a, \ldots, 0, \ldots, a \}$, such that $\beta/2 < a \leq \beta-1$. The size of thus prescribed alphabets for parallel addition equals $\#\A = 2a+1$, which is at least $\beta+2$ for $\beta$ odd and $\beta+3$ for $\beta$ even. So the redundancy is a bit higher here (+2 and +3 digits for odd and even bases, respectively) than the minimum needed for parallel addition (just +1 digit on top of the canonical alphabet); but, in turn, the algorithm performing parallel addition is very simple, and the carry propagates by just one position.\\

The level of redundancy in $(\beta, \A)$ was decreased to the minimum for even positive integer bases $\beta = 2a$ (including also $\beta = 2$) in the parallel addition algorithm provided in~\cite{ChowRobertson}; using again symmetric alphabets, i.e., $\A = \{ -a, \ldots, 0, \ldots, a \}$ with $\# \A = 2a+1 = \beta+1$. And in~\cite{Parhami}, B.~Parhami gave the algorithms for all positive integer bases $\beta \geq 2$, both even and odd, on any (generally non-symmetric) alphabets of consecutive integers (containing zero) of the minimal size $\#\A = \beta+1$.\\

Parallel addition algorithms on (integer) alphabets of the minimal size are provided in~\cite{FrPeSv2} and~\cite{FrHePeSv} for several classes of the bases:
\begin{itemize}
    \item positive and negative integers $\beta = \pm b$, with $b \in \NN, b \geq 2$: on $\# \A = b+1$;
    \item positive and negative rational numbers $\beta = \pm a/b$, with $a, b \in \NN$ co-prime: on $\# \A = a+b$;
    \item quadratic Pisot numbers $\beta > 1$ fulfilling $\beta^2 = a\beta - b$, with $a, b \in \NN$, $a-2 \geq b \geq 1$: on $\# \A = a+b-1$;
    \item quadratic Pisot numbers $\beta > 1$ fulfilling $\beta^2 = a\beta + b$, with $a, b \in \NN$, and $a-1 \geq b \geq 2$ or $a \geq b = 1$: on $\# \A = a+b+1$;
    \item roots of $\beta = \sqrt[\ell]{b}$, with $\ell, b \in \NN$, $b \geq 2$: on $\# \A = b+1$ (where $\beta$ cannot be written as $\beta = \sqrt[\ell'']{c}$, with $\ell = \ell' \ell''$, $b = c^{\ell'}$, and $\ell', \ell'', c \in \NN$).
\end{itemize}

As for the bases being roots of $\beta = \sqrt[\ell]{-b}$, with $b \in \NN$, $b \geq 2$, no general formula of parallel addition algorithm on alphabets of minimal size has yet been provided for this class as a whole, but just for selected individual cases (studied in~\cite{Nielsen+Muller, FrPeSv2}), again on integer alphabets:
\begin{itemize}
    \item Penney base $\beta = \imath - 1$: on $\# \A = 5$;
    \item Knuth base $\beta = 2 \imath$: on $\# \A = 5$;
    \item base $\beta = \imath \sqrt{2}$: on $\# \A = 3$.
\end{itemize}

\subsection{Calculating in Blocks}\label{sub-sec:k-block}

In~\cite{Kornerup}, an alternative approach was introduced to perform calculations on $(\beta, \A)$-representations, namely calculating in so-called $k$-blocks. It means that, for a given positive integer $k \geq 1$, we divide the $(\beta, \A)$-representation into blocks of $k$ digits, and then treat it in fact as a $(\beta^k, \B)$-representation, where the alphabet $\B = \{ \sum_{j=0}^{k-1} a_j \beta^j \, \vert \, a_j \in \A \}$ contains new digits with respect to new base~$\beta^k$.\\

As to the problem of parallel addition, we explain in Section~\ref{sub-sec:necessary-conditions} that the $k$-block concept does not broaden the set of eligible bases. Nevertheless, it may help to decrease the size of alphabets (level of redundancy) needed for parallel addition. This is the case e.g. of the $d$-bonacci bases -- roots of the minimal polynomials $X^d = X^{d-1} + X^{d-2} + \cdots + X + 1$ for $d \geq 2$: here the block concept decreases the size of the alphabet for parallel addition to just $3$ digits, instead of the $d+1$ digits necessary for $1$-block parallel addition.\\

For the so-called Canonical Number Systems (CNS, introduced by B.~Kov\'acs in~\cite{Kovacs}, and then extensively studied by others), with a complex base $\beta$ and alphabet $\{ 0, 1, \ldots, |N(\beta)|-1 \}$, where $|N(\beta)|$ denotes the norm of $\beta$ over $\QQ$, it is proved in~\cite{FrHePeSv} that block parallel addition is possible on the alphabets $\{ 0, 1, \ldots, 2|N(\beta)|-2 \}$ or $\{ -|N(\beta)|+1, \ldots, -1, 0, 1, \ldots, |N(\beta)|-1 \}$. The size of these alphabets ($\#\A = 2|N(\beta)|-1$) may be substantially smaller than the minimal size of alphabet needed for $1$-block parallel addition in the same base $\beta$. For instance, see Section~\ref{sec:EWM-results}:
\begin{itemize}
    \item Penney base $\beta = \imath -1$ requires a $5$-digit alphabet for ($1$-block) parallel addition, but only $3$~digits for $k$-block parallel addition: an algorithm using $k=4$ is provided in~\cite{Herreros}, and we further diminish the block size to $k=2$ in this work;
    \item Eisenstein base $\beta = \exp{(2\pi\imath/3)}$ requires a $7$-digit alphabet for ($1$-block) parallel addition, but only $5$~digits for $k$-block parallel addition: we find an algorithm using $k=3$.
\end{itemize}

\subsection{Necessary Conditions on Bases and Alphabets for Parallel Addition}\label{sub-sec:necessary-conditions}

Let us recall that we still consider just finite $(\beta, \A)$-representations as the operands (summands to be added):
\begin{equation}
    \fin{\A} = \left\{ x = \sum_{j=R}^L x_j \beta^j \, \vert \, x_j \in \A, L, R \in \ZZ \right\} \, ,
\end{equation}
and let us denote
\begin{equation}
    \A[\beta] = \left\{ \sum_{i=0}^N a_i \beta^i \colon a_i \in \A, \, N \in \NN \right\}
    \qquad \mathrm{and} \qquad
    \ZZ[\beta] = \left\{ \sum_{i=0}^N a_i \beta^i \colon a_i \in \ZZ, \, N \in \NN \right\} \, .
\end{equation}

For the case of alphabets $\A \subset \ZZ$ of consecutive integers, the condition on base~$\beta$ to allow parallel addition was proved in~\cite{FrHePeSv}, and later in~\cite{JLe-article} it was generalized to alphabets $\A \subset \Zbeta$:\\

\begin{theorem}\label{thm:base_no-modulo-1}
Let $\beta$ be a complex number such that $|\beta|>1$. There exists an alphabet $\A \subset \ZZ[\beta]$ with $0 \in \A$ and $1 \in \fin{\A}$ which allows $k$-block parallel addition in $(\beta,\A)$ for some $k \in \NN$, if and only if $\beta$ is an algebraic number with no conjugate of modulus~$1$. If this is the case, then there also exists an alphabet of consecutive integers containing~$0$ which enables $1$-block parallel addition in base~$\beta$.
\end{theorem}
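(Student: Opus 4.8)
The plan is to prove Theorem~\ref{thm:base_no-modulo-1} in the two directions separately, treating the ``only if'' part as a necessary-condition argument and the ``if'' part as a constructive one, with the final sentence (existence of an alphabet of consecutive integers) obtained as a by-product of the construction.

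\emph{Necessity.} Suppose $(\beta,\A)$ admits $k$-block parallel addition for some alphabet $\A\subset\ZZ[\beta]$ with $0\in\A$, $1\in\fin\A$, and some $k\in\NN$. First I would argue that $\beta$ must be algebraic: if $\beta$ were transcendental, then $\ZZ[\beta]$ is a polynomial ring and the condition $\sum w_j\beta^j=\sum z_j\beta^j$ for the digit set conversion forces equality of the corresponding (Laurent) polynomials, so no genuine rewriting with bounded window can reconcile a long carry -- more concretely, one produces arbitrarily long operands whose sum in $\A+\A$ cannot be rewritten locally. Then, assuming $\beta$ algebraic, suppose for contradiction that $\beta$ has a conjugate $\gamma$ with $|\gamma|=1$. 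The key step is the standard ``conjugation'' trick: the $p$-local digit set conversion $\varphi$ must preserve the value at \emph{every} conjugate, i.e.\ if $\sum w_j\beta^j=\sum z_j\beta^j$ then also $\sum w_j\gamma^j=\sum z_j\gamma^j$ (since the $w_j,z_j$ lie in $\ZZ[\beta]$ and the relation is algebraic over $\QQ$, respecting the field automorphism sending $\beta\mapsto\gamma$). Feed in a specific input, e.g.\ the all-ones block of length $Nk$ added to itself, giving value $2\cdot\frac{\beta^{Nk}-1}{\beta-1}$; the output $z$ has digits bounded by $\max_{a\in\A}|a(\gamma)|=:M$ and support in a window of size $Nk+\text{const}$, so $|\sum z_j\gamma^j|\le M\cdot(Nk+c)$ grows only linearly in $N$, whereas under $\beta\mapsto\gamma$ the input value $2\cdot\frac{\gamma^{Nk}-1}{\gamma-1}$ stays bounded -- wait, that is itself bounded, so the contradiction must instead come from choosing an input whose $\gamma$-image is unbounded while its $\beta$-image admits a bounded-window rewriting; the classical choice is an input representing $0$ in base $\beta$ (using that $\fin\A$ already has redundancy, since $\A+\A$ has a representation of $0$ of the form $(1,\dots)$) whose $\gamma$-image grows. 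This is essentially the argument of~\cite{FrHePeSv}, now to be run for $\A\subset\Zbeta$ rather than $\A\subset\ZZ$; the adaptation is routine once one notes $\Zbeta$ is stable under all conjugations.

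\emph{Sufficiency (and the last sentence).} Now assume $\beta$ is an algebraic number with $|\beta|>1$ and no conjugate of modulus~$1$. I would reduce immediately to constructing an \emph{alphabet of consecutive integers} $\A=\{-a,\dots,b\}\subset\ZZ$ admitting $1$-block parallel addition -- this simultaneously proves the ``if'' direction (such an $\A$ is in particular a subset of $\ZZ[\beta]$ with $0\in\A$ and $1\in\fin\A$) and the final sentence. The construction is the one recalled in the introduction from~\cite{FrPeSv}: since no conjugate of $\beta$ has modulus~$1$, one splits the conjugates into those inside and those outside the unit disc and uses a compactness/pigeonhole argument (a small generalization of the classical proof that numbers with no conjugate on the unit circle have a finite-weight representation of~$0$ with a dominant digit) to produce a representation of $0$ of the shape $\sum_{j=0}^{m} c_j\beta^j=0$ with $c_0$ strictly larger in modulus than $\sum_{j\ge1}|c_j|$. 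Taking $\A$ to be a symmetric set of consecutive integers large enough to contain all the $c_j$ and to absorb one carry, this dominant-digit relation becomes the rewriting rule of a $1$-local digit set conversion from $\A+\A$ to $\A$: scanning from the most significant position, whenever a partial digit exceeds $\max\A$ one subtracts $c_0$ there and adds $c_1,\dots,c_m$ to the next $m$ positions, and the dominance condition guarantees the process terminates after bounded look-ahead and never re-overflows. Verifying conditions i) and ii) of Definition~\ref{def:digitSetConversion} and the $p$-locality of Definition~\ref{def:plocal} is then a bounded bookkeeping argument.

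\emph{Main obstacle.} The delicate point is not the rewriting bookkeeping but the existence of the dominant-digit representation of~$0$ for an \emph{arbitrary} algebraic number (not necessarily an algebraic integer, and not necessarily real) with no unit-modulus conjugate: one must control simultaneously the archimedean places where $|\beta_i|>1$ (where high powers $\beta_i^j$ blow up) and those where $|\beta_i|<1$ (where they shrink), as well as denominators if $\beta$ is not an algebraic integer. I expect the cleanest route is to invoke, essentially verbatim, the corresponding lemma from~\cite{FrHePeSv}/\cite{FrPeSv} -- which already handles exactly this separation of conjugates -- and then only to check that its output alphabet can be taken to be consecutive integers, which it can, since enlarging a symmetric integer alphabet only helps. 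Thus the real content to write out is (a) the conjugation argument for necessity in the $\Zbeta$ setting, and (b) the passage from ``dominant-digit representation of $0$'' to ``$1$-local conversion'', with the deep input quoted as known.
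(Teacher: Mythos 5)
The paper itself does not prove Theorem~\ref{thm:base_no-modulo-1}: it is stated in the preliminaries as a known result imported from \cite{FrHePeSv} (integer alphabets) and \cite{JLe-article} (alphabets in $\Zbeta$), so there is no in-paper proof to compare against. Your sufficiency sketch does match the cited construction of \cite{FrPeSv}: a representation of zero with integer coefficients and one dominant coefficient yields a local digit set conversion on a sufficiently large alphabet of consecutive integers, settling the ``if'' direction and the final sentence simultaneously with $k=1$. The one slip there is describing the rewriting as a scan ``from the most significant position'' that ``terminates''---that is the sequential picture; in the parallel version every position independently selects its multiple of the rewriting rule from a bounded window, exactly as in the weight-coefficient formalism of Section~\ref{sec:addition}.

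The genuine gap is in the necessity direction, where both witness inputs you propose fail. You already observe that the all-ones block has bounded image under the conjugation $\sigma:\beta\mapsto\gamma$ (as $\gamma\neq 1$). Your fallback---``an input representing $0$ in base $\beta$ whose $\gamma$-image grows''---cannot exist: if $\sum w_j\beta^j=0$ with $w_j\in\Zbeta$, then $\sum\sigma(w_j)\gamma^j=\sigma(0)=0$, so the $\gamma$-image is exactly zero. The argument that actually works (and is the one in \cite{FrHePeSv}) uses locality \emph{quantitatively}: since $1\in\fin{\A}$ and one application of a $p$-local conversion with $\phi(0,\dots,0)=0$ enlarges the support of a representation by at most $p-1$ positions, iterated parallel doubling $1,2,4,\dots,2^n$ produces a $(\beta,\A)$-representation of $2^n$ of length $O(n)$. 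Applying $\sigma$ and using $|\gamma|=1$ gives $2^n=|\sigma(2^n)|=\left|\sum\sigma(z_j)\gamma^j\right|\le M\cdot O(n)$ with $M=\max_{a\in\A}|\sigma(a)|$, a contradiction for large $n$. The same evaluation at an arbitrary point of the unit circle (legitimate because the identity becomes a polynomial identity in $\ZZ[X]$) disposes of the transcendental case, and the $k$-block case reduces to the $1$-block case for $\beta^k$, whose conjugates have modulus $1$ exactly when those of $\beta$ do. Without this logarithmic-length (equivalently, linear-in-the-number-of-doublings) lemma, your necessity argument does not close.
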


The problem of the minimal alphabet size for parallel addition was again studied first for the case of alphabets of consecutive integers, e.g. in~\cite{FrPeSv2}, and then extended in~\cite{JLe-article} to the more general case of $\A \subset \Zbeta$. Those results provide not just the lower bound on the alphabet size, but also the form of digits that have to be contained in the alphabet, based on congruence classes modulo $\beta$ and $\beta-1$.
We recall that $\gamma,\delta\in\Zbeta$ are congruent modulo $\alpha\in\Zbeta$ if and only if $\gamma - \delta \in \alpha\cdot\Zbeta$.

\begin{theorem}\label{thm:modulo-beta&beta-1}
If a numeration system $(\beta, \A)$ with $\A[\beta] = \ZZ[\beta]$ allows parallel addition, then the alphabet~$\A \subset \Zbeta$ contains at least one representative of each congruence class modulo~$\beta$ and modulo~$\beta-1$ in~$\Zbeta$.
\end{theorem}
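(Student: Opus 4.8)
The plan is to treat the two moduli separately, in each case showing that the natural projection of the alphabet onto the relevant quotient ring is surjective. Write $\pi_\beta\colon\Zbeta\to\Zbeta/\beta\Zbeta$ and $\pi\colon\Zbeta\to\Zbeta/(\beta-1)\Zbeta$ for the canonical ring homomorphisms; both quotient rings are finite (since $\beta$ is an algebraic integer, $\Zbeta$ is a finitely generated $\ZZ$-module and the norms of $\beta$ and $\beta-1$ are nonzero). The modulus $\beta$ is the easy case and uses only $\A[\beta]=\Zbeta$: because $\pi_\beta(\beta)=0$, every element $\sum_{i=0}^N a_i\beta^i$ of $\A[\beta]$ is sent by $\pi_\beta$ to $\pi_\beta(a_0)$, hence $\pi_\beta(\A[\beta])=\pi_\beta(\A)$; since $\A[\beta]=\Zbeta$ projects onto the whole quotient, $\pi_\beta(\A)=\Zbeta/\beta\Zbeta$, which is exactly the statement modulo~$\beta$.

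For the modulus $\beta-1$ I would first extract the weaker fact that $\pi(\A)$ \emph{generates} $R:=\Zbeta/(\beta-1)\Zbeta$ additively. Here $\pi(\beta)=\pi(1)=1_R$, so $\pi(\beta^i)=1_R$ for all $i\geq0$ and $\pi\bigl(\sum_{i=0}^N a_i\beta^i\bigr)=\sum_{i=0}^N\pi(a_i)$; thus $\pi(\A[\beta])$ is precisely the additive sub-semigroup of $R$ generated by $\pi(\A)$, which — as $R$ is finite and $0\in\A$ — coincides with the subgroup $\langle\pi(\A)\rangle$. From $\A[\beta]=\Zbeta$ we conclude $\langle\pi(\A)\rangle=R$.

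The main step is to upgrade this to $\pi(\A)=R$, and this is where parallel addition is used. Let $\varphi$ be a $p$-local digit set conversion from $\A+\A$ to $\A$ with anticipation $t$, memory $r$, $p=r+t+1$ and local rule $\phi\colon(\A+\A)^p\to\A$; note $\phi(0,\dots,0)=0$ because a finitely supported input must have finitely supported image. Fix $c\in\A+\A$ and, for large $m$, apply $\varphi$ to the input equal to $c$ on positions $0,\dots,m-1$ and $0$ elsewhere, which represents $c\,(1+\beta+\dots+\beta^{m-1})$. The output $z^{(m)}\in\A^\ZZ$ is supported on $[-t,\,m-1+r]$ and, by translation invariance of $\phi$, consists of a fixed left-boundary block of length $p-1$, then the constant digit $e:=\phi(c,\dots,c)\in\A$ repeated $m-p+1$ times, then a fixed right-boundary block of length $p-1$. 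Applying $\pi$ to the value-preservation identity (after clearing negative powers by multiplying through by $\beta^t$) and using $\pi(\beta^i)=1_R$ gives an identity of the form $m\,\pi(c)=(m-p+1)\,\pi(e)+L+\rho$ in $R$, where $L,\rho\in R$ are the $\pi$-images of the two boundary blocks and are independent of $m$. Comparing this identity for two consecutive values of $m$ forces $\pi(c)=\pi(e)\in\pi(\A)$. Hence $\pi(\A+\A)\subseteq\pi(\A)$, and together with $\pi(\A)\subseteq\pi(\A+\A)$ (valid since $0\in\A$) this shows $\pi(\A)$ is closed under addition and contains~$0$, hence is a subgroup of the finite group $R$; being a subgroup that generates $R$, it equals $R$, so $\A$ meets every congruence class modulo~$\beta-1$.

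The only genuinely delicate point is the index bookkeeping in the last step: one must verify that, once $m$ is large enough for the two boundary zones not to overlap, the left- and right-boundary output blocks — and therefore $L$ and $\rho$ — really are $m$-independent, and that the number of middle positions carrying the digit $e$ is exactly $m-p+1$; both follow from locality of $\phi$ together with a careful count of supports, and it is precisely this $m$-independence that makes the ``compare two values of $m$'' argument go through. Everything else is routine: the hypothesis $\A[\beta]=\Zbeta$ is used only to obtain the two generation statements, and parallel addition is used only through the inclusion $\pi(\A+\A)\subseteq\pi(\A)$.
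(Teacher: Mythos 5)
Your argument is correct. Note that the paper does not actually prove Theorem~\ref{thm:modulo-beta&beta-1}: it is recalled from~\cite{JLe-article} without proof, and your argument is essentially the standard one from that line of work (cf.~\cite{FrPeSv2} for integer alphabets) — the modulo-$\beta$ claim follows from $\A[\beta]=\Zbeta$ alone, and the modulo-$(\beta-1)$ claim is obtained by feeding the constant word $c^m$ into the local rule, reducing the value-preservation identity modulo $\beta-1$ (where every power of $\beta$ becomes $1$), and comparing consecutive values of $m$ to conclude $c\equiv\phi(c,\dots,c)\pmod{\beta-1}$. The bookkeeping you flag does close: for $m\ge p$ the boundary windows occupy $[-t,r-1]$ and $[m-t,m-1+r]$, each of length $p-1$ with $m$-independent window contents, and the middle range $[r,m-1-t]$ carries exactly $m-p+1$ copies of $e$. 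One minor remark: your appeal to finiteness of $\Zbeta/(\beta-1)\Zbeta$ presupposes that $\beta$ is an algebraic integer, which the theorem does not hypothesize (Theorem~\ref{thm:base_no-modulo-1} only forces $\beta$ to be an algebraic number). This is harmless, because finiteness is not actually needed: once $\pi(\A)$ contains $0$ and satisfies $\pi(\A)+\pi(\A)\subseteq\pi(\A)$, it already contains every finite sum of its elements, i.e. all of $\pi(\A[\beta])$, which is the whole quotient by hypothesis — so the subgroup/sub-semigroup detour can be skipped entirely.
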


In the case of bases~$\beta$ being algebraic integers, it is convenient that the lower bound on the size of alphabets for parallel addition can be expressed by means of the minimal polynomial of~$\beta$:

\begin{theorem}\label{thm:minimal-alphabet-size}
Let $(\beta,\A)$ be a numeration system such that $\beta \in \CC, |\beta| > 1$ is an algebraic integer with minimal polynomial $m_{\beta}$, and let $\A[\beta] = \ZZ[\beta]$. If $(\beta, \A)$ allows parallel addition, then $\beta$~is expanding and
$$ \#\A \geq \max \{|m_\beta(0)|, |m_\beta(1)|\}\,. $$
Moreover, if $\beta$ has a positive real conjugate, then
$$ \#\A \geq \max \{|m_\beta(0)|, |m_\beta(1)|+2\}\,. $$
\end{theorem}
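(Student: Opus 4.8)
The plan is to read off all three conclusions from Theorems~\ref{thm:base_no-modulo-1} and~\ref{thm:modulo-beta&beta-1}, combined with elementary commutative algebra in the ring $\ZZ[\beta]\cong\ZZ[x]/(m_\beta(x))$.

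\emph{Step 1 (expanding).} First I would note that $m_\beta(0)=\pm N(\beta)\neq 0$ and $m_\beta(1)=\pm N(\beta-1)\neq 0$, because $|\beta|>1$ rules out $\beta=0$ and $\beta=1$. Since $\A[\beta]=\ZZ[\beta]$, the alphabet $\A$ satisfies $0\in\A$ and $1\in\fin{\A}$, so Theorem~\ref{thm:base_no-modulo-1} applies to $(\beta,\A)$ and $\beta$ has no conjugate of modulus~$1$. It remains to exclude conjugates of modulus $<1$: if $\sigma\colon\QQ(\beta)\to\CC$ is a field embedding with $|\sigma(\beta)|<1$, then, $\A$ being finite, every $\sum_{i=0}^{N}a_i\beta^i\in\A[\beta]$ has $\sigma$-image of modulus at most $\bigl(\max_{a\in\A}|\sigma(a)|\bigr)(1-|\sigma(\beta)|)^{-1}$, so $\sigma(\A[\beta])$ is bounded, whereas $\sigma(\ZZ[\beta])=\ZZ[\sigma(\beta)]\supseteq\ZZ$ is not; this contradicts $\A[\beta]=\ZZ[\beta]$. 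Hence every conjugate of $\beta$ exceeds~$1$ in modulus, i.e.\ $\beta$ is expanding.

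\emph{Step 2 (the bound $\#\A\ge\max\{|m_\beta(0)|,|m_\beta(1)|\}$).} Since $\beta$ is an algebraic integer, $m_\beta$ is monic in $\ZZ[x]$, so by Gauss's lemma $\ZZ[\beta]\cong\ZZ[x]/(m_\beta(x))$, and therefore
\[
\ZZ[\beta]/\beta\ZZ[\beta]\cong\ZZ[x]/(m_\beta(x),x)\cong\ZZ/m_\beta(0)\ZZ,\qquad
\ZZ[\beta]/(\beta-1)\ZZ[\beta]\cong\ZZ[x]/(m_\beta(x),x-1)\cong\ZZ/m_\beta(1)\ZZ.
\]
Thus there are exactly $|m_\beta(0)|$ congruence classes modulo~$\beta$ and $|m_\beta(1)|$ modulo~$\beta-1$ in $\ZZ[\beta]$, and by Theorem~\ref{thm:modulo-beta&beta-1} the alphabet $\A$ contains a representative of each, so $\#\A\ge|m_\beta(0)|$ and $\#\A\ge|m_\beta(1)|$.

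\emph{Step 3 (the improvement under a positive real conjugate).} Let $\sigma$ be the embedding with $\gamma:=\sigma(\beta)\in\RR$, $\gamma>1$. Pushing the $p$-local conversion forward by $\sigma$ (which is injective on $\A+\A$ and carries each identity $\sum w_j\beta^j=\sum z_j\beta^j$ to $\sum\sigma(w_j)\gamma^j=\sum\sigma(z_j)\gamma^j$, leaving supports unchanged) shows that $(\gamma,\sigma(\A))$ is again a numeration system admitting parallel addition, with $\gamma>1$ real, $\sigma(\A)[\gamma]=\ZZ[\gamma]$, $\#\sigma(\A)=\#\A$, and $m_\gamma=m_\beta$. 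So it suffices to prove the sharper bound $\#\A\ge|m_\beta(1)|+2$ for a real base $\gamma$. Here the mechanism is that the linear order on $\RR$ forces the digit set to be redundant ``on both sides'': the fixed-width local rule must resolve carries of both signs inside one window, which — exactly as in the classical Avizienis/Parhami-type analysis — is impossible unless, after projecting the $\#\A$ digits onto the $|m_\beta(1)|$ residue classes modulo $\gamma-1$ (i.e.\ onto $\ZZ[\gamma]/(\gamma-1)\ZZ[\gamma]\cong\ZZ/m_\beta(1)\ZZ$), at least two classes receive more than one representative; this gives $\#\A\ge|m_\beta(1)|+2$.

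I expect Step~3 to be the main obstacle: Steps~1 and~2 are routine once Theorems~\ref{thm:base_no-modulo-1} and~\ref{thm:modulo-beta&beta-1} are available, but quantifying precisely how much redundancy a bounded-window carry resolution demands for a real base, and seeing that the cost is two extra digits rather than one, requires the delicate magnitude/window bookkeeping behind the known real-base lower bounds (cf.~\cite{FrPeSv2, JLe-article}). If that real-base estimate is taken as a black box, Step~3 reduces to the functoriality of parallel addition under $\sigma$ described above.
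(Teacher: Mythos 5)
The paper does not prove Theorem~\ref{thm:minimal-alphabet-size}; it is recalled from \cite{JLe-article} (with the consecutive-integer precursor, Theorem~\ref{thm:base_alg-int_positive}, from \cite{FrPeSv2}), so your argument has to stand on its own. Steps 1 and 2 do. In Step 1, the exclusion of conjugates of modulus $1$ correctly comes from Theorem~\ref{thm:base_no-modulo-1} (note $0\in\A$ is already required by Definition~\ref{def:digitSetConversion}, and $1\in\ZZ[\beta]=\A[\beta]\subseteq\fin{\A}$), and your geometric-series observation --- that a conjugate $\sigma$ with $|\sigma(\beta)|<1$ would make $\sigma(\A[\beta])$ bounded while $\sigma(\ZZ[\beta])\supseteq\ZZ$ is not --- is a clean way to finish; it uses only finiteness of $\A$ and $\A[\beta]=\ZZ[\beta]$. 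Step 2 is exactly why the bound is phrased via $m_\beta$: since $m_\beta$ is monic, $\ZZ[\beta]\cong\ZZ[x]/(m_\beta)$, the quotients modulo $\beta$ and $\beta-1$ are $\ZZ/m_\beta(0)\ZZ$ and $\ZZ/m_\beta(1)\ZZ$ (both finite and nonzero because $|\beta|>1$ excludes $\beta\in\{0,1\}$), and Theorem~\ref{thm:modulo-beta&beta-1} forces a representative of each class into $\A$.

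The genuine gap is Step 3. The transport of the $p$-local conversion along the embedding $\sigma$ with $\sigma(\beta)=\gamma$ is legitimate (and by Step 1 the positive real conjugate automatically satisfies $\gamma>1$), but after that reduction you invoke the ``Avizienis/Parhami-type analysis'' as a black box, and that black box \emph{is} the content of the $+2$ claim. The only real-base lower bound available in this paper, Theorem~\ref{thm:base_alg-int_positive}, applies to alphabets of consecutive integers, whereas $\sigma(\A)$ is an arbitrary finite set of real algebraic numbers with $\sigma(\A)[\gamma]=\ZZ[\gamma]$; extending the two-extra-digits estimate to such alphabets is precisely the nontrivial part of \cite{JLe-article}. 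Your one-sentence description of the mechanism (two residue classes modulo $\gamma-1$ must each contain at least two digits) asserts the conclusion rather than deriving it: one must actually produce, from the bounded window $p$ and the growth of $\gamma^n$, inputs (long runs of the digits extremal under $\sigma$) whose conversion fails unless the class of the $\sigma$-maximal digit contains a strictly larger element and the class of the $\sigma$-minimal digit a strictly smaller one. As written, Step 3 appeals to the statement being proved.
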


For completeness, let us state also the earlier result from~\cite{FrPeSv2}, which is a bit stronger than Theorem~\ref{thm:minimal-alphabet-size} for the alphabets of consecutive integers, as it does not require that $\A[\beta] = \ZZ[\beta]$:

\begin{theorem}\label{thm:base_alg-int_positive}
Consider a base~$\beta \in \CC, |\beta| > 1$, being an algebraic integer with minimal polynomial~$m_{\beta}$. Let $\A$~be an alphabet of consecutive integers containing~$0$ and~$1$. If addition in ${\rm Fin}_{\A}(\beta)$ is computable in parallel, then $\# \A \geq |m_{\beta}(1)|$. If, moreover, the base has a positive real conjugate, then $\# \A \geq |m_{\beta}(1)| + 2$.
\end{theorem}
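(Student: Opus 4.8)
The plan is to reduce the statement to a purely arithmetic fact: the existence of a $p$-local digit set conversion from $\A+\A$ to $\A$ in base~$\beta$ forces $\A$ to contain a representative of every residue class modulo $\beta-1$ in $\ZZ[\beta]$. Since $\beta$ is an algebraic integer, $\ZZ[\beta]\cong\ZZ[X]/(m_\beta)$ is a free $\ZZ$-module of rank $\deg m_\beta$, so multiplication by $\beta-1$ has cokernel of cardinality $|N(\beta-1)|=|m_\beta(1)|$; equivalently $\ZZ[\beta]/(\beta-1)\ZZ[\beta]\cong\ZZ/(m_\beta(1))\ZZ$, which has exactly $|m_\beta(1)|$ elements, each admitting an integer representative. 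Once the covering statement is established, the first inequality $\#\A\ge|m_\beta(1)|$ is immediate.

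To obtain the covering statement I would use a pumping argument together with a composition trick. First, parallel conversions compose: because $\A=\{m,\dots,M\}$ is an interval of integers, a memoryless rule splits any digit of the $k$-fold sumset $k\A=\{km,\dots,kM\}$ into a digit of $\A$ plus a digit of $(k-1)\A$ (sending $0$ to $(0,0)$), and composing this with $\psi_{k-1}$, digitwise addition, and the given conversion $\A+\A\to\A$ yields, by induction on $k$, a $p_k$-local digit set conversion $\psi_k$ from $k\A$ to $\A$ with local rule $\phi_k$. Now fix $c\in k\A$ and feed $\psi_k$ the input that equals $c$ on the positions $0,\dots,L-1$ and $0$ elsewhere, for $L$ large compared to $p_k$. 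By $p_k$-locality the output $z$ is supported in a bounded neighbourhood of $[0,L-1]$, it is a fixed transition word $U$ near position $0$, a fixed transition word $V$ near position $L$ (both of length $<p_k$ and independent of $L$), and in between it carries the single digit $a_c:=\phi_k(c,\dots,c)\in\A$ repeated. Substituting into $\sum_j z_j\beta^j=c\,(\beta^L-1)/(\beta-1)$, multiplying by $\beta-1$, and using that the resulting identity holds for all large $L$ while $1,\beta,\dots,\beta^{\deg m_\beta-1}$ is a $\QQ$-basis of $\QQ(\beta)$, one writes it as $A\beta^L+B=0$ with $A,B\in\QQ(\beta)$ fixed; subtracting consecutive instances gives $A=0$ (as $\beta\ne0,1$) and then $B=0$. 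The identity $B=0$, after clearing any negative powers, has the shape $(\beta-1)W=a_c\beta^{s}-c\beta^{n}$ with $W\in\ZZ[\beta]$ and fixed $n,s\ge0$, so reduction modulo $\beta-1$ gives $c\equiv a_c\pmod{\beta-1}$. Hence $\A$ meets the class of every $c\in k\A$; choosing $k\ge|m_\beta(1)|$ (legitimate since $\#\A\ge2$ because $0,1\in\A$) makes the interval $k\A$ run over all $|m_\beta(1)|$ classes, whence $\#\A\ge|m_\beta(1)|$.

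For the refinement, assume $\beta$ has a positive real conjugate and let $\sigma:\beta\mapsto\beta'$ be the corresponding embedding; since no conjugate has modulus~$1$ we have $\beta'>0$ and $\beta'\ne1$, the principal case being $\beta'>1$. Applying $\sigma$ to the two $\ZZ[\beta]$-identities produced by the pumping argument turns them into relations among real numbers in which, since $\beta'>1$, the digits of $U$ and $V$ (all in $\{m,\dots,M\}$) contribute with a fixed sign. Running this for the two extreme digits $c=kM$ and $c=km$ of $k\A$ and comparing the leading $(\beta')^L$-terms should force the bulk digit $a_c$ and the outermost entries of the transition words to lie strictly inside $\A$ — so $\A$ must contain two elements beyond a complete residue system modulo $\beta-1$, i.e.\ $\#\A\ge|m_\beta(1)|+2$. (When $0<\beta'<1$ one argues instead from the boundedness of $\sigma(\fin{\A})$ near each power of $\beta'$, which is either vacuous under the hypotheses or returns the same estimate.)

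I expect this last step to be the genuine obstacle: extracting \emph{two} surplus digits rather than merely one from the positivity of the real conjugate requires careful bookkeeping of the $O(1)$-size transition data of the pumped outputs as $L\to\infty$, and the sign analysis at both ends of the alphabet has to be carried out with precision. By contrast, the composition of conversions and the extraction of the congruence modulo $\beta-1$ are routine once the pumping picture is set up correctly.
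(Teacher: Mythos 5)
This theorem is quoted by the paper from~\cite{FrPeSv2} without proof, so the comparison below is against the argument in that source rather than against anything in the present text.

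Your proof of the first inequality is essentially the standard one and I consider it correct: composing the given parallel conversion with a digitwise splitting rule gives a local conversion from $k\A$ to $\A$; pumping a constant block $c^L$ and using locality yields a representation of $c\,(\beta^L-1)/(\beta-1)$ of the form (bounded suffix)(constant bulk $a_c$)(bounded prefix); separating the $\beta^L$-part from the constant part of the resulting identity gives $c\equiv a_c \pmod{\beta-1}$ in $\ZZ[\beta]$; and the identification $\ZZ[\beta]/(\beta-1)\ZZ[\beta]\cong\ZZ/ m_\beta(1)\ZZ$ together with $\A$ being an interval of integers forces $\#\A\geq|m_\beta(1)|$. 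This is the same route as in~\cite{FrPeSv2} and as in the paper's Theorem~\ref{thm:modulo-beta&beta-1}.

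The second inequality is where your proposal has a genuine gap, and it is not merely a matter of "careful bookkeeping": the strategy you describe cannot work, because it extracts information only from the constant-block inputs $c^L$, and that information is already consistent with $\#\A=|m_\beta(1)|+1$. Concretely, take $\beta=3$ and $\A=\{-1,0,1\}$, so $|m_\beta(1)|=2$, $\beta$ is its own positive real conjugate, and the theorem asserts $\#\A\geq4$. In balanced ternary every $c\,(3^L-1)/2$ with $c\in k\A$ has, for large $L$, exactly the pumped shape you describe, with all digits in $\{-1,0,1\}$, bulk digit $a_c\equiv c\pmod 2$, and bounded transition words; moreover the map $c\mapsto a_c$ already uses both extreme digits $\pm1$ (e.g.\ $3^L-1$ has bulk digit $0$ with transition digits $1$ and $-1$). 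So every identity and every sign/magnitude relation obtainable from your pumping picture under the real embedding is satisfied by an alphabet with only \emph{one} digit beyond a complete residue system, and no contradiction can be derived from it; the actual obstruction for $(3,\{-1,0,1\})$ is the rigidity (uniqueness) of representations, which your inputs never probe. The proof in~\cite{FrPeSv2} of the "$+2$" part indeed needs a different device exploiting the positive real conjugate on a richer family of inputs, not just the constant blocks. Finally, your parenthetical dismissal of the case $0<\beta'<1$ is also unjustified: that case is not vacuous (e.g.\ quadratic Pisot bases such as $\beta=2+\sqrt2$ have conjugate $2-\sqrt2\in(0,1)$) and must be argued separately.
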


\section{Addition -- Standard vs. Parallel}\label{sec:addition}

The general idea of addition (standard or parallel) in any numeration system $(\beta,\A)$ is the following: we sum up two numbers digit-wise, and then convert the result with digits in~$\A+\A$ into the alphabet~$\A$. Obviously, digit-wise addition is computable in parallel, so the problematic part is the digit set conversion of thus obtained result. It can be easily done in a standard way (from right to left), but parallel digit set conversion is non-trivial. Parallel conversion may be based on the same formulas as the standard one, but the choice of so-called \emph{weight coefficients} differs in general.\\

Let $w_{n'} \cdots w_1 w_0 \bullet w_{-1} \cdots w_{-m'}$ be a $(\beta, \A+\A)$-representation of $w \in \fin{\A+\A}$ obtained by digit-wise addition of $(\beta,\A)$-representations of summands. We search for a $(\beta,\A)$-representation of~$w$, i.e., a sequence $z_{n} \cdots z_1 z_0 z_{-1} \cdots z_{-m}$ such that $z_j \in \A$ and $ z_{n} \cdots z_1 z_0 \bullet z_{-1} \cdots z_{-m}=(w)_{\beta,\A}$. Note that the indices $n$ and $-m$ of the first and the last non-zero digits of the converted representation $(w)_{\beta,\A} = z_{n} \cdots z_1 z_0 \bullet z_{-1} \cdots z_{-m}$ generally differ from the indices $n'$ and~$-m'$ of the original representation $(w)_{\beta,{\A+\A}} = w_{n'} \cdots w_1 w_0 \bullet w_{-1} \cdots w_{-m'}$. We have $ n\geq n'$ and $m \geq m'$; if not, the converted representation is padded by zeros. For easier notation, such representations can be multiplied by~$\beta^{m'}$. Hence, without loss of generality, we can consider only conversion of elements of~$(\A+\A)[\beta]$, i.e., numbers from~$\fin{\A+\A}$, whose representations have all digits with negative indices equal to zero.\\

Digits $w_j$ are converted from $\A+\A$ into the alphabet~$\A$ by digit-wise addition of suitable representations of zero. Any polynomial $R(x)=r_s x^s + \dots + r_1 x + r_0$ with coefficients $r_j \in \Zbeta$ such that $R(\beta) = 0$ gives a representation of zero in the base~$\beta$. Such polynomial $R$ is called a \emph{rewriting rule} for~$\beta$. One of the coefficients of~$R$ which is greatest in modulus (so-called \emph{dominant coefficient}) may be used for conversion of a digit from~$\A+\A$ into~$\A$. Nevertheless, the Extending Window Method proposed later in Section~\ref{sec:EWM-description} to generate parallel addition algorithms is strongly dependent on the rewriting rule. Therefore, usage of an arbitrary rewriting rule~$R$ is not within the scope of this work, and we focus only on the simplest possible representation of zero -- rewriting rule deduced from the polynomial
\begin{equation}\label{eq:basic-zero-representation}
    R(x) = x - \beta \in \left(\Zbeta\right)[x]\,.
\end{equation}

Since $R(\beta) = 0 = \beta^{j} \cdot R(\beta) = 1 \cdot \beta^{j+1} - \beta \cdot \beta^{j}$ for any $j \in \NN$, there is a representation of zero in the form $1 (-\beta) 0 \cdots 0 \bullet = (0)_\beta$, with $-\beta$~on the $j$-th~position. We multiply this representation by a so-called \emph{weight coefficient} $q_j \in \Zbeta$, in order to obtain another representation of zero in the form
\begin{equation*}
    q_j (-q_j \beta) \underbrace{0 \cdots 0}_{j}\bullet = (0)_\beta \, .
\end{equation*}

This is digit-wise added to $w_{n} \cdots w_1 w_0 \bullet$ in order to convert the digit~$w_j$ from~$\A+\A$ into the alphabet~$\A$. Such conversion of the $j$-th~digit causes a \emph{carry}~$q_{j}$ onto the $(j+1)$-th~position, i.e., to the left neighbour. This is clear from the column notation of digit-wise addition.
\begin{center}
\begin{tabular}{rccccccclcl}
	$w_{n'}$              & $\cdots$  & $w_{j+1}$ & $w_{j}$        & $w_{j-1}$        & $\cdots$  & $w_1$ & $w_0$ & $\bullet$ & $=$ & $(w)_{\beta,{\A+\A}}$\\
	                      &           &           & $q_{j-1}$      & $-\beta q_{j-1}$ & $\iddots$ &       &       &           & $=$ & $(0)_\beta$\\
	                      & $\iddots$ & $q_{j}$   & $-\beta q_{j}$ &                  &           &       &       &           & $=$ & $(0)_\beta$ \\ \hline
    $z_{n} \cdots z_{n'}$ & $\cdots$  & $z_{j+1}$ & $z_{j}$        & $z_{j-1}$        & $\cdots$  & $z_1$ & $z_0$ & $\bullet$ & $=$ & $(w)_{\beta,{\A}}$\\
\end{tabular}
\end{center}

Hence, the desired formula for conversion on the $j$-th position is $z_j := w_j + q_{j-1} - q_j \beta$. The coefficient $q_{-1}$ is zero, since there is no carry from the right onto the $0$-th position. The terms \emph{carry} and \emph{weight coefficient} are related to a specific position: a weight coefficient~$q_{j-1}$ is a carry from the right neighbour $(j-1)$ onto the $j$-th position, $q_j$~is a weight coefficient chosen on the $j$-th position, and thus $q_j$~is a carry from the $j$-the position onto the $(j+1)$-th position, etc. Therefore, the conversion using the rewriting rule $x - \beta$ prolongs the part of non-zero digits only to the left, as there is no carry to the right. So all positions with negative indices remain with zero digits in the converted representation $z_n \cdots z_1 z_0 \bullet = (w)_{\beta,{\A}}$.\\

The conversion preserves the value of~$w$, since only representations of zero are added, formally
\begin{align}\label{eq:valuePreserving}
    \sum_{j\geq 0} z_j \beta^j &=w_0 - \beta q_0 + \sum_{j> 0} (w_j + q_{j-1} - q_j \beta) \beta^j  \notag = \\
    &= \sum_{j\geq 0} w_j \beta^j + \sum_{j>0} q_{j-1} \beta^j - \sum_{j\geq 0} q_j \cdot \beta^{j+1} = \\
    &=\sum_{j\geq 0} w_j \beta^j + \sum_{j>0} q_{j-1} \beta^j - \sum_{j> 0} q_{j-1} \cdot \beta^j = \sum_{j\geq 0} w_j \beta^j = w \,. \notag
\end{align}

The weight coefficients $q_j$ must be chosen so that the converted digits $z_j$ are in the alphabet~$\A$, namely,
\begin{equation}\label{eq:conversionFormula}
    z_j = w_j + q_{j-1} - q_j \beta \in \A \mathrm{\ for\ any\ } j \in \NN \,.
\end{equation}

For standard addition algorithm, the digit set conversion runs from the right ($j=0$) to the left ($j = n$), until all non-zero digits and carries are converted into the alphabet~$\A$. In this way, determination of the weight coefficients is a trivial task since the carry is known when a weight coefficient is determined.\\

But when designing parallel addition algorithms, choosing the weight coefficient is the crucial and difficult task, because there might be more possible carries from the right neighboring $(j-1)$-th~position that arrive to the $j$-th~position. This is done via the Extending Window Method, as described in Section~\ref{sec:EWM-description}. We require that the digit set conversion from~$\A+\A$ into~$\A$ is computable in parallel, i.e. there exist constants $r, t \in \NN_0$ such that for all $j\geq 0$ it holds that $z_j = z_j(w_{j+t}, \dots, w_{j-r})$. In our case, the anticipation~$t$ equals zero, since we use the rewriting rule $x - \beta$. To avoid dependency on all less significant digits (on the right from the processed position), we need some variety in the choice of the weight coefficient~$q_j$. This implies that the used numeration system $(\beta, \A)$ must be redundant.\\

The core difference between standard and parallel addition can be expressed, for conversion of $(\beta, \A + \A)$-representation $w_{n'} \ldots w_1 w_0 \bullet = (w)_{\beta, \A + \A}$ into $(\beta, \A)$-representation $z_{n} \ldots z_1 z_0 \bullet = (w)_{\beta, \A}$, as follows:
\begin{align*}
    \mathrm{standard\ addition:} \qquad z_j &=  z_j(w_{j}, \dots, w_{j-r}, \dots, w_{0})      \,, \\
    \mathrm{parallel\ addition:} \qquad z_j &=  z_j(w_{j+t}, \dots, w_{j-r})  \,.
\end{align*}

\begin{remark}\label{rem:A+A_B_A}
Consider a base $\beta \in \CC, |\beta| > 1$, and alphabet $\A \subset \Zbeta$ with sets $\B, \D \subset \Zbeta$ satisfying:
\begin{equation}
    0 \in \D \subset \A \subsetneq \B \subset \A + \A \qquad \mathrm{and} \qquad \A + \D \subset \B \, ,
\end{equation}
and let $l \in \NN$ be such that
\begin{equation}\label{eq:x=a+d+...+d}
    (\forall x \in \A) (\exists d^{(1)}, \ldots, d^{(l)} \in \D) (x = d^{(1)} + \cdots + d^{(l)}) \, .
\end{equation}
Then existence of parallel conversion in base~$\beta$ from~$\B$ to~$\A$ implies existence of parallel conversion in base~$\beta$ from~$\A + \A$ to~$\A$, and thus also existence of parallel addition in the numeration system $(\beta, \A)$.\\
Clearly, when summing up two elements $x, y \in \fin{\A}$ expressed as $x = \sum x_j \beta^j, y = \sum y_j \beta^j$ with $x_j, y_j \in \A$, we can use the form \eqref{eq:x=a+d+...+d} of each $x_j = d_j^{(1)} + \cdots + d_j^{(l)}$, and denote by $d^{(k)} = \sum d_j^{(k)} \beta^j$ for each $k = 1, \ldots, l$. Then, we split the operation $x + y$ into $l$~operations, gradually producing $z^{(k)} := d^{(k)} + z^{(k-1)}$ for $k = 1, \ldots, l$, with $z^{(0)} = y$. Each $k$-th step provides a~$(\beta, \A)$-representation of $z^{(k)}$ via parallel conversion of $d^{(k)} + z^{(k-1)}$ from~$\A + \D \subset \B$ to~$\A$. After $l$~iterations, we get the desired $(\beta, \A)$-representation of $z^{(l)}$ in the form
\begin{align*}
	z^{(l)} &= d^{(l)} + z^{(l-1)} = d^{(l)} + (d^{(l-1)} + z^{(l-2)}) = \cdots \\
	&= d^{(l)} + (d^{(l-1)} + (\cdots + (d^{(1)} + y))) = \sum_{k=1}^l d^{(k)} + y = x + y \, .
\end{align*}
The number $l \in \NN$ of iterations is fixed (not depending on $x, y \in \fin{\A}$), and composition of a fixed number of parallel conversions is still a parallel conversion; so we have the conversion from~$\A + \A$ to~$\A$ done in parallel.\\
This approach can be easily used for alphabets $\A \subset \ZZ$ of consecutive integers, e.g.:
\begin{itemize}
    \item $\A = \{m, \ldots, 0, \ldots, M\}$ with $\D = \{0, \pm 1\}$,  $\B = \{m-1, \ldots, M+1\}$ and $l = \max\{-m, M\}$; or
    \item $\A = \{0, \ldots, M\}$ with $\D = \{0, 1\}$, $\B = \{0, \ldots, M+1\}$ and $l = M$.
\end{itemize}
In the case of complex alphabets $\A \subset \Zbeta$, the application may be useful especially when bigger (non-minimal) alphabets are considered for the parallel addition algorithm.
\end{remark}

\section{Extending Window Method to Construct Parallel Addition Algorithms}\label{sec:EWM-description}

For a numeration system $(\beta, \A)$ such that $\beta$~is an algebraic integer and ${\A \subset \Zbeta}$, and let $\B \subset \Zbeta$ be an input alphabet with $\A \subsetneq \B \subset \A+\A$, according to Remark~\ref{rem:A+A_B_A}. The Extending Window Method (EWM) is a~newly proposed approach, attempting to construct algorithms for digit set conversion in the base~$\beta$ from~$\B$ to~$\A$ computable in parallel. Due to our choice~\eqref{eq:basic-zero-representation} of the representation of zero used within the EWM, the resulting parallel addition algorithms always have zero anticipation $t=0$, so carries only from the right.\\

As mentioned above, the key problem is to find appropriate weight coefficients $q_j \in \Zbeta$ such that
\begin{equation*}
	z_j = \underbrace{w_j}_{\in \B} + \,q_{j-1} - q_j \beta \in \A \mathrm{\quad for\ all \quad} j\geq 0 \, ,
\end{equation*}
for any input $w \in \fin{\B}$ with $(\beta, \B)$-representation $w_{n'} \dots w_1 w_0 \bullet = (w)_{\beta,\B}$.\\

The digits~$z_j$ of the result have to satisfy $z_j = z_j (w_{j}, \dots, w_{j-r})$ for some fixed memory $r \in \NN$, so the carries~$q_j$ have to fulfil $q_j = q_j (w_j, \ldots, w_{j-(r-1)})$ for the same~$r$. But the fact that the converted digit is on the $j$-th~position is not important -- the conversion must proceed in the same way on every position. Therefore, we simplify the notation by omitting the index~$j$ from the subscripts. From now on, $w_0 \in \B$ is the converted digit, $w_{-1} w_{-2} \dots \in \B$ are its neighboring digits on the right, $q_{-1} \in \Zbeta$ is the carry from the right, and we search for a weight coefficient $q_0 \in \Zbeta$ such that
\begin{equation*}
    z_0=w_0 + q_{-1} - q_0 \beta \in \A \,.
\end{equation*}

Before describing the Extending Window Method, let us introduce two definitions:

\begin{definition}\label{def:weightCoefficientsSet}
Let $(\beta, \A)$ be a numeration system, and let $\B \subset \Zbeta$ be a digit set such that $\A \subsetneq \B \subset \A+\A$. Any finite set $\Q \subset \Zbeta$ containing~$0$ such that
\begin{equation*}
    \B + \Q \subset \A + \beta \Q
\end{equation*}
is called a \emph{weight coefficients set} for the given numeration system $(\beta, \A)$ and input digit set~$\B$.
\end{definition}

A weight coefficients set $\Q \subset \Zbeta$ satisfies
\begin{equation}
    (\forall w_0 \in \B)(\forall q_{-1}\in\Q)(\exists q_0 \in \Q )(\underbrace{w_0 + q_{-1} - q_0 \beta}_{z_0} \in \A ) \,.
\end{equation}
In other words, there is a weight coefficient $q_0 \in \Q$ for any carry $q_{-1} \in \Q$ from the right and for any digit~$w_0$ from the input alphabet~$\B$, such that $z_0 = w_0 + q_{-1} - q_0 \beta$ is in the target alphabet~$\A$.

\begin{definition}\label{def:weightFunction}
Let $\Q \subset \Zbeta$ be a weight coefficients set for numeration system $(\beta, \A)$ and input digit set $\B \subset \Zbeta$. Let $r \in \NN$, and let $q:\B^{r} \rightarrow \Q$ be a mapping such that
\begin{align*}
    w_0 + q(w_{-1}, \dots, w_{-r}) - \beta q\tupleo{(r-1)} & \in    \A \text{ for\ any } w_0,w_{-1}, \dots, w_{-r} \in \B \, , \\
                                         \text{and} \quad   q(0, \dots, 0) & =      0 \,.
\end{align*}
Such mapping~$q$ is called \emph{weight function of length}~$r$ for $(\beta, \A)$ and input digit set~$\B$.
\end{definition}

Having a weight function $q: \B^{r} \rightarrow \Q$, we define a function $\phi: \B^{r+1} \rightarrow \A$ by the formula
\begin{equation}\label{eq:localConversion}
    \phi(w_{0}, \dots, w_{-r})=w_0+ \underbrace{q(w_{-1}, \dots, w_{-r})}_{=q_{-1}} - \beta \underbrace{q\tupleo{(r-1)}}_{=q_0}=:z_0 \in \A \,,
\end{equation}
and thus obtain the digit set conversion from $\B$ to $\A$ in base $\beta$ as $(r+1)$-local function, with anticipation~$0$ and memory~$r$. The requirement that $q(0, \ldots, 0) = 0$, i.e., zero output of the weight function~$q$ for the input of~$r$ zeros, guarantees that $\phi(0, \dots, 0) = 0$. Thus, the first condition of Definition~\ref{def:digitSetConversion} is satisfied. The second condition follows from the equation~\eqref{eq:valuePreserving}.\\

Let us recall the principle of the digit set conversion algorithms based on the rewriting rule \mbox{$x-\beta$}. Assume existence of the weight coefficients set~$\Q$ and the weight function $q: \B^{r} \rightarrow \Q$ for the given numeration system $(\beta, \A)$ and the input digit set~$\B$. To convert $w_{n'} \ldots w_1 w_0 \bullet = (w)_{\beta, \B}$ into $z_{n} \ldots z_1 z_0 \bullet = (w)_{\beta, \A}$, first assign the weight coefficients $q \in \Q$ for each position -- independently, so all at once (in parallel). Then multiply the rewriting rule by the weight coefficients~$q$, and add them digit-wise to the input sequence. In fact, it means that the equation~\eqref{eq:conversionFormula} is applied on each position independently. The digit set conversion is computable in parallel thanks to the fact that the weight coefficients are determined as outputs of the weight function~$q$ of a fixed length~$r$.\\

In order to enable this way of computation, we introduce the so-called \emph{Extending Window Method}. It works in two phases, for a given numeration system $(\beta, \A)$ and an input digit set $\B$. First, it finds some weight coefficients set~$\Q \subset \Zbeta$, as described in Definition~\ref{def:weightCoefficientsSet}. This set~$\Q$ then serves as the starting point for the second phase, in which we gradually increment the expected length~$r$, until the weight function~$q: \B^r \rightarrow \Q$ is uniquely defined for each $\tupleo{(r-1)} \in \B^{r}$, as required in Definition~\ref{def:weightFunction}. If both these phases are successful, the local conversion function is finally determined -- we use the weight function outputs~$q$ as the weight coefficients in the formula~\eqref{eq:localConversion}.\\

Further in this section, we describe construction of the weight coefficients sets and the weight functions, in the two phases respectively. Convergence of both phases is then discussed in Section~\ref{sec:EWM-convergence}.

\subsection{Phase 1 -- Weight Coefficients Set}\label{sub-sec:EWM-description-phase1}

The goal of the first phase is to compute a weight coefficients set $\Q \subset \Zbeta$, i.e., find a set $\Q \ni 0$ such that
$$ \B + \Q \subset \A + \beta \Q\,. $$
We build a sequence $\Q_0, \Q_1, \Q_2, \dots$ of sets $\Q_k \subset \Zbeta$ iteratively, by extending $\Q_k$ to~$\Q_{k+1}$ in such a way that all elements of the set $\B + \Q_k$ get covered by elements of the extended set~$\Q_{k+1}$:
$$ \B+ \Q_k \subset \A + \beta \Q_{k+1}\,.$$
This procedure is repeated until the extended weight coefficients set $\Q_{k+1}$ is the same as the previous set~$\Q_{k}$.
In the sequel, we use the expression \emph{weight coefficient $q$ covers an element $x$}, meaning that there is a digit $a \in \A$ such that $x = a + \beta q$.\\

We start with $\Q_0 = \{ 0 \}$, search for all weight coefficients~$q_0$ necessary to cover all elements $x \in \B$, and add them to the set~$\Q_0$ to obtain the set~$\Q_1$. Then, assume that we have the set~$\Q_k$ for some $k\geq 1$. The weight coefficients in~$\Q_k$ now may appear as the carries~$q_{-1}$. If there are no suitable coefficients~$q_0$ in the set~$\Q_k$ to cover all sums $x = b + q_{-1}$ of coefficients $q_{-1} \in \Q_k$ and digits $b \in \B$, we extend~$\Q_k$ to~$\Q_{k+1}$ with such suitable coefficients, and increase $k := k+1$. And so on, until there is no need to add more elements into~$\Q_k$, as the set~$\Q_k$ already covers all elements from $\B + \Q_k$, so in fact $Q_k = Q_{k+1}$. Then the weight coefficients set $\Q := \Q_k = Q_{k+1}$ satisfies the Definition~\ref{def:weightCoefficientsSet}. Algorithmic description of this process in quasi-code is summarized in Algorithm~\ref{alg:weightCoefSet}. Section~\ref{sub-sec:EWM-convergence-phase1} discusses the convergence of Phase~1, namely the conditions under which it actually happens that $\Q_{k+1} = \Q_k$ for some $k \in \NN$.\\

\begin{algorithm}
\caption{Search for weight coefficients set~$\Q$ (Phase~1)}
\label{alg:weightCoefSet}
    \begin{algorithmic}[1]
   		\REQUIRE{numeration system $(\beta, \A)$, input digit set~$\B$}
        \STATE $k := -1$
        \STATE $Q_0:=\{0\}$
        \REPEAT
            \STATE $k:=k+1$
        	\STATE set $C_x := \{\frac{x-a}{\beta} \colon a\in\A, x-a \text{ is divisible by }\beta\}$ for each $x \in \B + \Q_{k}$ \label{alg-line:candidates}
            \STATE extend $\Q_k$ to $\Q_{k+1}$ so that $\B + \Q_k \subset \A + \beta \Q_{k+1}$, \\
         	i.e., $C_x \cap \Q_{k+1} \neq \emptyset$ for each $x \in \B + \Q_{k}$ (e.g. by Algorithm~\ref{alg:extendWeightCoefSet})
        \UNTIL{$\Q_k = \Q_{k+1}$}
        \STATE $\Q:=\Q_k$
        \ENSURE{weight coefficients set $\Q$}
    \end{algorithmic}
\end{algorithm}

Note that the extension of~$\Q_k$ to~$\Q_{k+1}$ is not unique. Algorithm~\ref{alg:extendWeightCoefSet} shows two possible ways of such construction. Let $C_x = \{\frac{x-a}{\beta} \colon a\in\A, x-a \text{ is divisible by }\beta\}$ for each $x \in \B + \Q_{k}$, and let $\Q'_{k+1}$ contain all elements of $\Q_k$ and elements from all $C_x$ such that $\#C_x=1$. The set $\Q'_{k+1}$ is then extended to $\Q_{k+1}$ by adding all smallest elements from every $C_x$ such that $C_x \cap \Q'_{k+1} = \emptyset$. Different norms may be used to determine the smallest elements, e.g. the absolute value or the $\beta$-norm from Definition~\ref{def:betaNorm}. Other methods of extending $\Q_k$ to~$\Q_{k+1}$ are suggested in~\cite{JLe-thesis}.

\begin{algorithm}
  \caption{Extending intermediate weight coefficients set $Q_k \rightarrow Q_{k+1}$ (Phase~1)}
  \label{alg:extendWeightCoefSet}
  \begin{algorithmic}[1]
    \REQUIRE{previous interim weight coefficients set~$\Q_{k}$, list of candidates $C_x$ for each $x \in \B + \Q_{k}$}
    \STATE $\Q_{k+1}:=\Q_{k}\cup \{q \colon \# C_x =1, C_x = \{q\}, x \in \B + \Q_{k}\}$
    \FORALL{$x \in \B + \Q_{k}$}
        \IF{$C_x \cap \Q_{k+1}=\emptyset$}
	        \STATE add all smallest elements in absolute value (or, alternatively, in $\beta$-norm) of $C_x$ to $\Q_{k+1}$
        \ENDIF
    \ENDFOR
    \ENSURE{intermediate weight coefficients set $\Q_{k+1}$}
  \end{algorithmic}
\end{algorithm}

\subsection{Phase 2 -- Weight Function}\label{sub-sec:EWM-description-phase2}

In the second phase, we want to find a~length $r \in \NN$ and a~weight function $q: \B^{r} \rightarrow \Q$. We start with the weight coefficients set~$\Q$ obtained in Phase~1. The idea is to reduce the number of necessary weight coefficients which are used to convert a given input digit up to just one single value. This is done by increasing gradually the number~$r$ of considered input digits (to the right). When we know more digits (right neighbours of the processed input digit) that cause the carry from the right, then we may be able to decrease the set of possible carries from the right, and consequently need less weight coefficients to convert the input digit into~$\A$.\\

We introduce a notation for the sets of possible weight coefficients for given input digits. If $w_0 \in \B$, then $\Q_{[w_0]}$ denotes a subset of~$\Q$ such that
$$ (\forall q_{-1} \in \Q)(\exists q_0 \in \Q_{[w_0]})(w_0 + q_{-1} - q_0 \beta \in \A)\,. $$
It means that, since the input digits on the right side from the processed input digit can be arbitrary, any carry~$q_{-1}$ from the set~$\Q$ is possible. However, we may be able to limit the set~$\Q$ to its subset~$\Q_{[w_0]}$ of weight coefficients which allow the conversion of~$w_0$ to~$\A$, due to knowledge of the input digit~$w_0$ itself.\\

By induction with respect to $k \in \NN, k\geq 1$, for all $\tupleo{k}\in \B^{k+1}$, let $\Qwo{k}$ denote a~subset of $\Qwo{(k-1)}$ such that
\begin{equation*}
	 (\forall q_{-1} \in \Qw{1}{k})(\exists q_0 \in \Qwo{k})(w_0 + q_{-1} - q_0 \beta \in \A)\,.
\end{equation*}

The sets $\Qwo{k}$ of possible weight coefficients, and consequently a weight function $q$, are constructed by Algorithm~\ref{alg:weightFunction}. The idea is to check all possible right carries $q_{-1}\in\Q$ and determine a minimal subset of values $q_0 \in \Q$ such that
$$ z_0=w_0 + q_{-1} - q_0 \beta \in \A \,. $$

So we obtain a subset $\Q_{[w_0]} \subset \Q$ of weight coefficients which are necessary to cover the sum of the digit~$w_0$ with any carry $q_{-1} \in \Q$. This is done separately for every $w_0 \in \B$, so that we obtain such a subset $\Q_{[w_0]} \subset \Q$ for all $w_0 \in \B$. Then, assuming that we know the input digit~$w_{-1}$, the set of possible carries from the right is also reduced to~$\Q_{[w_{-1}]}$. Thus we may consider the pair $(w_0, w_{-1})$ of input digits, and reduce the set $\Q_{[w_0]}$ to a set $\Q_{[w_0, w_{-1}]} \subset \Q_{[w_0]}$, which is again a minimal subset necessary to cover all elements of $w_0 + \Q_{[w_{-1}]}$.\\

In the $k$-th step, we search for a minimal subset $\Qwo{k} \subset \Qwo{(k-1)}$ such that
\begin{equation*}
    w_0 + \Qw{1}{k} \subset \A + \beta \Qwo{k}\,.
\end{equation*}
Apart from the processed digit itself, we consider here also $k$~digits on the right. To  construct the set~$\Qwo{k}$, we select from~$\Qwo{(k-1)}$ such weight coefficients which are necessary to cover the sums of digits $w_0 + q_{-1}$, with all possible carries $q_{-1}$ from the set $\Qw{1}{k}$.\\

Proceeding in this manner may lead to a unique weight coefficient~$q_0$ for long enough $r$-tuple
of considered input digits $(w_0, \ldots, w_{-(r-1)})$. If there is $r\in\NN$ such that
$$ \#\Qwo{(r-1)} = 1 \mathrm{\quad for\ all \quad} \tupleo{(r-1)} \in \B^r \,, $$
then the output $q\tupleo{(r-1)}$ is defined as the only element of~$\Qwo{(r-1)}$.
To verify that
$$ z_0 = \phi(w_{0}, \dots, w_{-r}) = w_0 + \underbrace{q\tuple{1}{r}}_{= q_{-1}} - \beta \underbrace{q\tupleo{(r-1)}}_{= q_0} $$
is an element of the alphabet~$\A$, just recall that $q_0 = q\tupleo{(r-1)}$ is the only element of the set~$\Qwo{(r-1)}$,
which was constructed so that
$$ w_0 + \Qw{1}{(r-1)} \subset \A + \beta \Qwo{(r-1)}\,. $$
At the same time, $q_{-1}=q\tupleo{(r-1)}$ is the only element of~$\Qw{1}{r}$, which is a subset of~$\Qw{1}{(r-1)}$.\\

Unfortunately, finiteness of Phase~2 is not guaranteed. Some ways that might reveal non-convergence are discussed in Section~\ref{sub-sec:EWM-convergence-phase2}.\\

\begin{algorithm}
	\caption{Search for weight function $q$ (Phase 2)}
	\label{alg:weightFunction}
    \begin{algorithmic}[1]
		\REQUIRE{numeration system $(\beta, \A)$, input digit set~$\B$, weight coefficients set~$\Q$}
        \FORALL{$w_0 \in \B$}
            \STATE find set $\Q_{[w_0]} \subset \Q$ such that $ w_0 + \Q \subset \A + \beta \Q_{[w_0]} $ (e.g. by Algorithm~\ref{alg:minimalSet})
        \ENDFOR
        \STATE $k:=0$
        \WHILE{$\max\{\#\Qwo{k}\colon \tupleo{k}\in \B^{k+1} \} > 1$}
            \STATE $k:= k +1$
            \FORALL{$\tupleo{k}\in \B^{k+1}$}
                \STATE find set $\Qwo{k} \subset \Qwo{(k-1)}$ such that $w_0 + \Qw{1}{k} \subset \A + \beta \Qwo{k}$\\ (e.g. by Algorithm~\ref{alg:minimalSet})
            \ENDFOR
        \ENDWHILE
        \STATE $r := k+1$
        \STATE $q\tupleo{(r-1)} :=$ the only element of $\Qwo{(r-1)}$, for each $\tupleo{(r-1)} \in \B^{r}$
        \ENSURE weight function $q: \B^{r} \rightarrow \Q$
    \end{algorithmic}
\end{algorithm}

Similarly to Phase~1, the choice of~$\Qwo{k}$ is not unique. A list of different methods of choice is in~\cite{JLe-thesis}, the Algorithm~\ref{alg:minimalSet} below describes just two of them. Given a~$(k+1)$-tuple of input digits $(w_0, \ldots, w_{-k}) \in \B^{k+1}$, the set of possible carries $\Qw{1}{k}$ from the right to~$w_0$, and the previous set of possible weight coefficients $\Qwo{(k-1)}$ for $w_0$, let $D_x= \{q_0\in\Qwo{(k-1)}\colon\, \exists \, a\in\A: x=a+\beta q_0 \}$ for each $x \in w_0 + \Qw{1}{k}$. First, we put into $\Qwo{k}$ the elements of all $D_x$ such that $\#D_x=1$. Then, we repeat the following procedure \textbf{while} the set $D=\{D_x\colon x \in w_0 + \Qw{1}{k}, D_x \cap \Qwo{k} = \emptyset\}$ is non-empty. Let $D'$ be all sets in~$D$ of the minimal size, and let $g$~be the center of gravity of $\Qwo{k}$ considered as complex numbers. Let $T$~be all elements of $\bigcup D'$ closest to~$g$ in absolute value, or alternatively, the smallest in $\beta$-norm (see Definition~\ref{def:betaNorm}). We add a deterministically chosen element of~$T$ to $\Qwo{k}$, update~$D$, and continue the \textbf{while} loop. When the loop ends, i.e., every~$D_x$ has non-empty intersection with $\Qwo{k}$, we have the desired result.\\

\begin{algorithm}
	\caption{Search for set $\Qwo{k} \subset \Qwo{(k-1)}$ (Phase~2)}
    \label{alg:minimalSet}
  	\begin{algorithmic}[1]
    \REQUIRE{input digits $(w_0, \ldots, w_{-k}) \in \B^{k+1}$, set of possible carries $\Qw{1}{k}$, previous set of possible weight coefficients $\Qwo{(k-1)}$}
    \STATE $D_x:= \{q_0\in\Qwo{(k-1)}\colon\, \exists \, a\in\A: x=a+\beta q_0 \}$ for each $x \in w_0 + \Qw{1}{k}$
    \STATE $D:=\{D_x \colon x \in w_0 + \Qw{1}{k}\}$
    \STATE $\Qwo{k}:=\{q\colon \{q\}\in D\}$
	\STATE $D:=\{D_x\in D \colon D_x \cap \Qwo{k} = \emptyset\}$
    \WHILE{$D\neq\emptyset$}
        \STATE $m:=\min\{\#D_x \colon D_x\in D \}$
	   	\STATE $D':=\{D_x \in D \colon \#D_x=m\}$
		\STATE $g:=$ center of gravity of elements of $\Qwo{k}$ as complex numbers
		\STATE $T:=$ elements of $\bigcup D'$ which are closest to $g$ in absolute value \\
			     (alternatively, $T:=$ elements of $\bigcup D'$ which are smallest in $\beta$-norm)
  		\STATE $q:=$ deterministically chosen element of $T$ (e.g. the lexicographically smallest one)
        \STATE $\Qwo{k}:= \Qwo{k} \cup \{q\}$
        \STATE $D:=\{D_x\in D \colon D_x \cap \Qwo{k} = \emptyset\}$
    \ENDWHILE
    \ENSURE{$\Qwo{k}$}
  \end{algorithmic}
\end{algorithm}

Notice that, for a given length $r \in \NN$, the number of calls of Algorithm~\ref{alg:minimalSet} within Algorithm~\ref{alg:weightFunction} is
$$ \sum_{k=0}^{r-1}  \#\B^{k+1} = \#\B \frac{\#\B^r-1}{\#\B-1}\,. $$
It implies that the time complexity grows exponentially. The required memory is also exponential, as we have to store the sets~$\Qwo{k}$ for all $(w_0, \dots, w_{-k}) \in \B^{k+1}$, at least for $k = r-1$.\\

\section{Convergence of Extending Window Method}\label{sec:EWM-convergence}

\subsection{Convergence of Phase 1}\label{sub-sec:EWM-convergence-phase1}

In this section, we show that, if the Extending Window Method converges, then the base $\beta \in \CC$ must be \emph{expanding}, i.e., all its conjugates are greater than~$1$ in modulus. Then we prove that this property -- expanding base -- is also a sufficient condition for convergence of Phase~1, provided that the alphabet $\A \subset \Zbeta$ contains at least one representative of each congruence class modulo~$\beta$ in~$\Zbeta$. We see from Theorem~\ref{thm:modulo-beta&beta-1} that the requirements put on the alphabet~$\A$ are in line with the necessary conditions on~$(\beta, \A)$ for parallel addition.

\begin{theorem}\label{thm:betaMustBeExpanding}
Let $\A \subset \Zbeta$ be an alphabet such that $1 \in \A[\beta]$. If the Extending Window Method with the rewriting rule $x - \beta$ converges for numeration system~$(\beta, \A)$, then the base~$\beta$ is expanding.
\end{theorem}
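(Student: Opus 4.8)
The plan is to argue by contraposition: assuming $\beta$ is not expanding, I will produce an input on which the EWM (Phase~1, the construction of the weight coefficients set $\Q$) cannot terminate, because the sets $\Q_k$ grow without bound. The key quantitative tool is a Galois conjugate. Since $\beta$ is an algebraic integer with $|\beta| > 1$ but not expanding, it has a conjugate $\beta'$ with $|\beta'| \le 1$; in fact the hypothesis $1 \in \A[\beta]$ (together with the fact that $1 \in \A[\beta] \subset \ZZ[\beta]$ forces $\beta$ to be an algebraic integer — or this is assumed) rules out $|\beta'| = 1$ only via Theorem~\ref{thm:base_no-modulo-1} if a parallel algorithm existed, but here I want the bare estimate, so I will just take a conjugate with $|\beta'| \le 1$ and let $\sigma:\ZZ[\beta]\to\ZZ[\beta']$ denote the corresponding field isomorphism (restricted to $\ZZ[\beta]$).

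First I would recall the defining relation of the weight coefficients set: $\Q$ must satisfy $\B + \Q \subset \A + \beta\Q$, and more importantly that the iterates satisfy $\B + \Q_k \subset \A + \beta\Q_{k+1}$, so every $q_{-1}\in\Q_k$ and every $b\in\B$ force some $q_0\in\Q_{k+1}$ with $b + q_{-1} - \beta q_0 \in \A$, i.e. $q_0 = (b + q_{-1} - z_0)/\beta$ for some $z_0\in\A$. Applying $\sigma$ gives $\sigma(q_0) = (\sigma(b) + \sigma(q_{-1}) - \sigma(z_0))/\beta'$. Now I would bound things in the conjugate: let $M = \max\{|\sigma(b)| : b\in\B\} + \max\{|\sigma(a)| : a\in\A\}$, a finite constant depending only on $\A$ and $\B$. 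Then $|\sigma(q_0)| \ge (|\sigma(q_{-1})| - M)/|\beta'| \ge |\sigma(q_{-1})| - M$ since $|\beta'|\le 1$. Thus starting from a carry $q_{-1}$ with large $|\sigma(q_{-1})|$, the forced successor has conjugate-modulus at least $|\sigma(q_{-1})| - M$ — so once $|\sigma(q)|$ exceeds $M$ on some element of $\Q_k$, the process can never bring it back down below $|\sigma(q)| - M$, and iterating along a chain of such forced covers shows $\sup\{|\sigma(q)| : q\in\Q_k\}$ is non-decreasing up to an additive $M$; the point is to show it is in fact eventually strictly increasing, so the $\Q_k$ cannot stabilize.

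The subtle part — and the main obstacle — is that the inequality $|\sigma(q_0)| \ge |\sigma(q_{-1})| - M$ only says the successor is not much smaller; to force non-termination I need to exhibit an element of some $\Q_k$ with $|\sigma(q)|$ genuinely large, arising from the method itself rather than being assumed. For this I would use the input digit set $\B$: since $\A \subsetneq \B$, pick $b^* \in \B\setminus\A$. Starting from $q_{-1} = 0 \in \Q_0$, covering $b^*$ forces some nonzero $q_0 \in \Q_1$ (if $0$ covered $b^*$ we'd have $b^*\in\A$), so $\Q_1$ contains a nonzero element $q^{(1)}$. If $|\sigma(q^{(1))}| > 0$ — which I can arrange by choosing $b^*$, or by noting that a nonzero algebraic integer cannot have all conjugates of modulus $< $ any given bound if one conjugate is $0$... — actually the cleanest route is: iterate covering starting from $q^{(1)}$, using at each stage the digit $b\in\B$ whose conjugate $\sigma(b)$ is aligned (in argument) with $\sigma(q_{-1})$, so that $|\sigma(b) + \sigma(q_{-1}) - \sigma(z_0)|$ is actually close to $|\sigma(q_{-1})| + |\sigma(b)|$ minus a bounded error; this yields a chain $q^{(1)}, q^{(2)},\dots$ in $\Q_1 \subset \Q_2 \subset \cdots$ with $|\sigma(q^{(n)})|$ growing roughly linearly in $n$, hence these are pairwise distinct and the sequence $(\Q_k)$ never stabilizes, contradicting convergence. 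I expect the delicate bookkeeping to be exactly this alignment argument — ensuring the "bad" digit choice is available in $\B$ at every step and that the error terms really stay bounded — and I would handle the degenerate sub-case $|\beta'| = 1$ either by the same estimate (where it still gives $|\sigma(q_0)| \ge |\sigma(q_{-1})| - M$) or by appealing to the classification in Theorem~\ref{thm:base_no-modulo-1}.
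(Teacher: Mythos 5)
There is a genuine gap, and it sits exactly where you flag it. Your contraction estimate $|\sigma(q_0)| \geq (|\sigma(q_{-1})| - M)/|\beta'|$ is correct for \emph{every} admissible covering element, but it only yields unbounded growth of the sets $\Q_k$ once some $\Q_k$ already contains an element with $|\sigma(q)| > M/(1-|\beta'|)$, and nothing in the proposal actually produces such an element. The ``alignment'' idea does not close this: the digit $a \in \A$ used in a cover is chosen by the algorithm (it can partially cancel $\sigma(b)$), the covering element itself is chosen from a candidate set that may already meet the current $\Q_k$ (so no new element need be added), and the available directions $\sigma(b)$, $b \in \B$, form a finite set that need not align with an arbitrary $\sigma(q_{-1})$. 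Worse, the case of a conjugate with $|\beta'| = 1$ --- which is included in ``not expanding'' and is not excluded by the hypothesis $1 \in \A[\beta]$ --- kills the strategy outright: the estimate degenerates to $|\sigma(q_0)| \geq |\sigma(q_{-1})| - M$, which is perfectly compatible with a bounded, hence finite, set $\Q$, so Phase~1 may well converge there and the obstruction (if any) must be located in Phase~2, which your argument never touches. Your fallback of ``appealing to Theorem~\ref{thm:base_no-modulo-1}'' does not repair this, because that theorem speaks about the existence of \emph{some} alphabet allowing $k$-block parallel addition, not about non-convergence of the EWM for the given $(\beta,\A)$; to use it you would first have to argue that EWM convergence produces a parallel addition algorithm --- which is precisely the reduction you are trying to avoid.

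For comparison, the paper's proof is a two-line reduction: convergence of the EWM with the rewriting rule $x - \beta$ yields a digit set conversion that is a $p$-local function with anticipation $t = 0$ (no carry ever propagates to the right), and Corollary~3.6 of the cited reference states that a numeration system admitting parallel addition without anticipation must have an expanding base. If you want a self-contained argument along your lines, you would in effect be reproving that corollary; the honest route is either to cite it, as the paper does, or to carry out the full conjugate-pumping argument on the \emph{local function} $\phi$ produced by a convergent EWM (using long constant input blocks $b^n$ and the periodicity of the induced weight coefficients), rather than on the intermediate sets $\Q_k$ of Phase~1.
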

\begin{proof}
By Corollary 3.6 in~\cite{JLe-article}, if the numeration system $(\beta, \A)$ allows parallel addition without anticipation (i.e., $t=0$ in Definition~\ref{def:plocal}),	then $\beta$~is expanding. Parallel addition produced by the~EWM is indeed without anticipation, since there is no carry to the right when using the rewriting rule $x-\beta$.
\end{proof}

We need to define a norm in~$\Zbeta$, in order to prove Lemma~\ref{lem:suffCondPhase1}, which provides a finite set of weight coefficients~$\Q$. Finiteness of~$\Q$ is crucial for the proof of convergence of Phase~1. We exploit the fact that $\Zbeta=\{\sum_{i=0}^{d-1} u_i \beta^i\colon u_i\in\ZZ\}$, where $d$~is the degree of~$\beta$, if and only if $\beta$~is an algebraic integer. Hence, there is an obvious bijection $\pi:\Zbeta \rightarrow \ZZ^{d}$ given by
\begin{equation*}
    \pi(u)=\vect{u} \quad \text{ for every } u=\sum_{i=0}^{d-1} u_i \beta^i \in \Zbeta \,.
\end{equation*}
Using the concept of companion matrix, the additive group~$\ZZ^d$ can be equipped with multiplication such that the mapping~$\pi$ is a ring isomorphism (e.g. see~\cite{katai}). For our purpose, the following lemma is sufficient.

\begin{lemma}\label{lem:multiplyBeta}
Let $\beta \in \CC$ be an algebraic integer with the minimal polynomial $m_\beta (x) = x^d + p_{d-1} x^{d-1} + \cdots + p_1 x + p_0 \in \ZZ[x]$. If $S_\beta$~is the companion matrix of~$m_\beta$, i.e.,
\begin{equation*}
	S_\beta = \begin{pmatrix}
	0 & 0 & \cdots & 0 & -p_0 \\
	1 & 0 & \cdots & 0 & -p_1 \\
	0 & 1 & \cdots & 0 & -p_2 \\
	\vdots &   & \ddots & & \vdots \\
	0 & 0 & \cdots & 1 & -p_{d-1}
    \end{pmatrix} \in \ZZ^{d\times d} \, ,
\end{equation*}
then $\pi(\beta u) = S_\beta \cdot \pi(u)$ for any $u= \sum_{i=0}^{d-1} u_i \beta^i\in \Zbeta$.
\end{lemma}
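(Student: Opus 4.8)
The plan is to check the identity on the $\ZZ$-module basis $1, \beta, \ldots, \beta^{d-1}$ of $\Zbeta$ and then extend it by additivity. First I would record the structural fact already recalled above: since $\beta$ is an algebraic integer of degree $d$, every $u \in \Zbeta$ has a \emph{unique} representation $u = \sum_{i=0}^{d-1} u_i \beta^i$ with $u_i \in \ZZ$ --- uniqueness because $1, \beta, \ldots, \beta^{d-1}$ are $\QQ$-linearly independent, and existence because $m_\beta$ is monic, so every higher power $\beta^k$ reduces to a $\ZZ$-combination of $1, \ldots, \beta^{d-1}$. Hence $\pi$ is a well-defined bijection and, manifestly, a homomorphism of additive groups: $\pi(u+v) = \pi(u) + \pi(v)$. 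Therefore both maps $u \mapsto \pi(\beta u)$ and $u \mapsto S_\beta\,\pi(u)$ are $\ZZ$-linear on $\Zbeta$, and it suffices to verify that they coincide on the basis elements $\beta^i$, $0 \le i \le d-1$, whose $\pi$-images are the standard basis vectors $e_1, \ldots, e_d$ of $\ZZ^d$, with $\pi(\beta^i) = e_{i+1}$.

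Next I would split into two cases according to the index $i$. For $0 \le i \le d-2$ one has simply $\beta \cdot \beta^i = \beta^{i+1}$, so $\pi(\beta\cdot\beta^i) = e_{i+2}$; on the other hand $S_\beta\,\pi(\beta^i) = S_\beta e_{i+1}$ is the $(i+1)$-st column of $S_\beta$, which by the displayed shape of the companion matrix is exactly $e_{i+2}$. For $i = d-1$ I would invoke $m_\beta(\beta) = 0$ to write $\beta^d = -p_0 - p_1\beta - \cdots - p_{d-1}\beta^{d-1}$, so that $\pi(\beta\cdot\beta^{d-1}) = \pi(\beta^d) = (-p_0, -p_1, \ldots, -p_{d-1})^T$; and this vector is precisely the last column of $S_\beta$, i.e. $S_\beta e_d = S_\beta\,\pi(\beta^{d-1})$. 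Since the two $\ZZ$-linear maps agree on every basis vector, they agree everywhere, which is the claimed identity $\pi(\beta u) = S_\beta\,\pi(u)$.

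I do not expect any genuine obstacle in this argument: it is a direct computation, and the only thing demanding a little attention is the index bookkeeping between the columns of $S_\beta$ and the powers of $\beta$. The single nontrivial input --- uniqueness of the coordinate representation $u = \sum_{i=0}^{d-1} u_i\beta^i$ --- is already at hand, being the characterization of algebraic integers quoted immediately before the lemma, so no new machinery is needed.
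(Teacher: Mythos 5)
Your proof is correct and is essentially the same argument as the paper's: the paper carries out the identical computation (reducing $\beta^d$ via $m_\beta(\beta)=0$ and matching against the columns of $S_\beta$) directly for a general $u=\sum_{i=0}^{d-1}u_i\beta^i$, whereas you perform it on the basis vectors $\beta^i$ and extend by $\ZZ$-linearity of $\pi$ --- a purely organizational difference. Your index bookkeeping ($\pi(\beta^i)=e_{i+1}$, $S_\beta e_{i+1}=e_{i+2}$ for $i\le d-2$, and $S_\beta e_d$ equal to the last column $(-p_0,\dots,-p_{d-1})^T=\pi(\beta^d)$) checks out.
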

\begin{proof}
If $\pi(u)=\vect{u}$, then
\begin{align*}
	\pi(\beta u)   & =\pi\left(\beta \sum_{i=0}^{d-1} u_i \beta^i \right) = \pi\left(u_{d-1}\underbrace{(-p_{d-1}\beta^{d-1}- \dots -p_1\beta-p_0)}_{=\beta^d} +\sum_{i=0}^{d-2} u_i \beta^{i+1}\right) \\
	               & =\pi\left( -p_0 u_{d-1} + \sum_{i=1}^{d-1} (u_{i-1}- u_{d-1} p_i) \beta^i \right) \\
	               & =( -p_0 u_{d-1}, u_{0} - u_{d-1} p_1, \dots, u_{d-2} - u_{d-1} p_{d-1} ) = S_\beta \cdot \pi(u)\,.
\end{align*}
\end{proof}

We define a vector norm and a matrix norm induced by a given diagonalizable matrix, and the following Lemma~\ref{lem:propertiesSbeta} shows selected properties of the norm given by the companion matrix $S_\beta$ and\ $S_\beta^{-1}$.

\begin{definition}\label{def:newNorm}
Let $M \in \CC^{n\times n}$ be a diagonalizable matrix, and let $P \in \CC^{n\times n}$ be a~nonsingular matrix which diagonalizes~$M$, i.e., $M = P^{-1}DP$ for some diagonal matrix $D \in \CC^{n\times n}$. We define a~\emph{vector norm} $\norm{\cdot}{M}$ by
\begin{equation*}
    \norm{x}{M} := \norm{Px}{2} \quad \mathrm{for \ all} \ x \in \CC^n \, ,
\end{equation*}
where $\norm{\cdot}{2}$ is the Euclidean norm. A~\emph{matrix norm} $\Mnorm{\cdot}{M}$ is induced by the vector norm $\norm{\cdot}{M}$ by
\begin{equation*}
	\Mnorm{A}{M} := \sup_{\norm{x}{M} = 1} \norm{Ax}{M} \quad \mathrm{for \ all} \ A \in \CC^{n\times n} \, .
\end{equation*}
\end{definition}

\begin{lemma}\label{lem:propertiesSbeta}
Let $\beta$ be an algebraic integer of degree~$d$. If $S_\beta$ is the companion matrix of the minimal monic polynomial $m_\beta$ of $\beta$, then
\begin{align*}
   \Mnorm{S_\beta}{S_\beta}         & = \max \{|\beta'| \colon \beta' \text{ is conjugate of } \beta\},\text{ and } \\
   \Mnorm{S_\beta^{-1}}{S_\beta}    & = \max \left\{\frac{1}{|\beta'|} \colon \beta' \text{ is a conjugate of } \beta\right\} \, .
   \end{align*}
\end{lemma}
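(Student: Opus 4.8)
The plan is to diagonalize $S_\beta$ explicitly and reduce both induced matrix norms to the ordinary Euclidean operator norm of a diagonal matrix, which is immediate to evaluate. First I would observe that the characteristic polynomial of the companion matrix $S_\beta$ is exactly $m_\beta$, so the eigenvalues of $S_\beta$ are precisely the conjugates $\beta=\beta_1,\dots,\beta_d$ of $\beta$. Since $m_\beta$ is irreducible over $\QQ$ and the ground field has characteristic zero, $m_\beta$ is separable, so these $d$ roots are pairwise distinct; hence $S_\beta$ is diagonalizable and we may write $S_\beta = P^{-1} D P$ with $D = \mathrm{diag}(\beta_1,\dots,\beta_d)$ and $P\in\CC^{d\times d}$ nonsingular. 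This is precisely the setting of Definition~\ref{def:newNorm}, in which $\norm{x}{S_\beta} = \norm{Px}{2}$ with $\norm{\cdot}{2}$ the Euclidean norm. I would also record that $S_\beta$ is invertible, because $\det S_\beta = (-1)^d p_0 = (-1)^d m_\beta(0) \neq 0$ (the constant term of an irreducible polynomial of degree $d\geq 1$ having a root of modulus $>1$ cannot be zero), so that $S_\beta^{-1} = P^{-1} D^{-1} P$.

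The core step is the change of variables $y = Px$. For every $x\in\CC^d$ one has
\[
\norm{S_\beta x}{S_\beta} = \norm{P S_\beta x}{2} = \norm{\bigl(P S_\beta P^{-1}\bigr)(Px)}{2} = \norm{Dy}{2}, \qquad \norm{x}{S_\beta} = \norm{y}{2}.
\]
Because $P$ is a bijection of $\CC^d$, as $x$ runs over the unit sphere of $\norm{\cdot}{S_\beta}$ the vector $y$ runs over the Euclidean unit sphere; therefore $\Mnorm{S_\beta}{S_\beta} = \sup_{\norm{y}{2}=1}\norm{Dy}{2}$, and the same argument with $S_\beta^{-1} = P^{-1} D^{-1} P$ gives $\Mnorm{S_\beta^{-1}}{S_\beta} = \sup_{\norm{y}{2}=1}\norm{D^{-1}y}{2}$. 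It then remains to use the elementary fact that the Euclidean operator norm of a diagonal matrix equals the largest modulus of its diagonal entries: from $\sum_i|\beta_i|^2|y_i|^2 \leq \bigl(\max_i|\beta_i|\bigr)^2\sum_i|y_i|^2$, with equality attained at the appropriate standard basis vector, we obtain $\Mnorm{S_\beta}{S_\beta} = \max_i|\beta_i|$, and likewise $\Mnorm{S_\beta^{-1}}{S_\beta} = \max_i 1/|\beta_i|$, which are exactly the asserted formulas.

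The only point deserving care --- and the main obstacle, such as it is --- concerns the definition of the norm $\norm{\cdot}{S_\beta}$: a priori it depends on the chosen diagonalizing matrix $P$. However, the change-of-variables computation shows that both matrix norms collapse to the Euclidean operator norm of the fixed diagonal matrix $D$ (which is determined up to a permutation of its diagonal, irrelevant for the maximum), so the two quantities in the statement do not depend on the choice of $P$ and the lemma is unambiguous. All the remaining manipulations are routine.
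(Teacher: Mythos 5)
Your proof is correct and follows essentially the same route as the paper: both rest on the facts that the characteristic polynomial of $S_\beta$ is $m_\beta$ (so its eigenvalues are the distinct conjugates of $\beta$) and that $S_\beta=P^{-1}DP$ is diagonalizable, the only difference being that the paper cites the identity $\Mnorm{M}{M}=\rho(M)$ for diagonalizable $M$ from the literature, whereas you prove it inline by the change of variables $y=Px$. Your closing remark that the two induced matrix norms are independent of the choice of $P$ is a correct and worthwhile observation that the paper leaves implicit.
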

\begin{proof}
It is well known that the characteristic polynomial of the companion matrix~$S_\beta$ is~$m_\beta$ (see e.g.~\cite{horn}). Since the minimal polynomial~$m_\beta$ has no multiple roots, $S_\beta$~is diagonalizable over~$\CC$. Namely, there is a~nonsingular complex matrix~$P$ such that $S_\beta=P^{-1}DP$, where $D$~is diagonal matrix with the conjugates of~$\beta$ on the diagonal. Therefore, the norms $\norm{\cdot}{S_\beta}$ and $\Mnorm{\cdot}{S_\beta}$ are well-defined. Since the matrix~$S_\beta^{-1}$ is also diagonalized by~$P$, the vector norms $\norm{\cdot}{S_\beta}$ and~$\norm{\cdot}{S_\beta^{-1}}$ are the same, and so are the induced matrix norms $\Mnorm{\cdot}{S_\beta}$ and~$\Mnorm{\cdot}{S_\beta^{-1}}$.

Now, we use a known result from matrix theory~\cite{horn}: if $M \in \CC^{n\times n}$ is a~diagonalizable matrix, then the spectral radius $\rho(M)$ of the matrix~$M$ equals $\rho(M) = \Mnorm{M}{M}$. Since the eigenvalues of~$S_\beta$ are the conjugates of~$\beta$, we have
\begin{equation*}
	\Mnorm{S_\beta}{S_\beta} = \rho(S_\beta) = \max \{|\beta'| \colon \beta' \text{ is conjugate of } \beta\}\,.
\end{equation*}
Similarly,
\begin{equation*}
	\Mnorm{S_\beta^{-1}}{S_\beta} = \Mnorm{S_\beta^{-1}}{S_\beta^{-1}} = \rho(S_\beta^{-1})= \max \{\frac{1}{|\beta'|} \colon \beta' \text{ is conjugate of } \beta\}\,,
\end{equation*}
where we use the fact that the eigenvalues of~$S_\beta^{-1}$ are reciprocal of the eigenvalues of~$S_\beta$.
\end{proof}

Finally, we may define a norm in $\Zbeta$.

\begin{definition}\label{def:betaNorm}
Let $\pi$ be the isomorphism between $\Zbeta$ and $(\ZZ^d,+,\odot_\beta)$. Using notation from the previous Lemma~\ref{lem:propertiesSbeta}, we define \emph{$\beta$-norm}  $\normBeta{\cdot}: \Zbeta \rightarrow \RR^+_0$ by
$$ \normBeta{x} = \norm{\pi(x)}{S_\beta} \quad \mathrm{for \ all} \ x \in \Zbeta \, . $$
\end{definition}

An important property of the $\beta$-norm is that, for a given constant $K > 0$, there are only finitely many elements of~$\Zbeta$ bounded by~$K$ in this norm. The explanation is as follows: images of elements of~$\Zbeta$ under the isomorphism $\pi$ are integer vectors, and there are only finitely many integer vectors in any finite-dimensional vector space bounded by any norm. It is a consequence of equivalence of all norms on a~finite-dimensional vector space.

\begin{lemma}\label{lem:suffCondPhase1}
Let $\beta \in \CC$ be an expanding algebraic integer of degree~$d$. If $\A$~and~$\B$ are finite subsets of~$\Zbeta$ such that $\A$~contains at least one representative of each congruence class modulo~$\beta$ in~$\Zbeta$, then there exists a finite set $\Q \subset \Zbeta$ such that $ \B + \Q \subset \A + \beta \Q$.
\end{lemma}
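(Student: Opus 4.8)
The goal is to construct a finite set $\Q \subset \Zbeta$ with $0 \in \Q$ satisfying $\B + \Q \subset \A + \beta\Q$. The natural approach is to show that Algorithm~\ref{alg:weightCoefSet} terminates by exhibiting an explicit bounded region that all the iterates $\Q_k$ stay inside; finiteness then follows from the key property of the $\beta$-norm noted after Definition~\ref{def:betaNorm}, that a $\beta$-norm ball contains only finitely many elements of $\Zbeta$. So the plan is: (1) first observe that the congruence hypothesis on $\A$ guarantees the covering step is always \emph{possible}, i.e., for every $x \in \Zbeta$ there is at least one candidate $q \in \Zbeta$ with $x - \beta q \in \A$ (choose $a \in \A$ congruent to $x$ modulo $\beta$, then $q = (x-a)/\beta \in \Zbeta$); (2) then control the \emph{size} of such a $q$ using the expanding property.

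\textbf{The size estimate.} Work in the $\beta$-norm. If $x = a + \beta q$ with $a \in \A$, then $\normBeta{q} = \normBeta{\beta^{-1}(x - a)} = \norm{S_\beta^{-1}\pi(x-a)}{S_\beta} \leq \Mnorm{S_\beta^{-1}}{S_\beta}\,\normBeta{x - a} \leq \Mnorm{S_\beta^{-1}}{S_\beta}\,(\normBeta{x} + \normBeta{a})$. By Lemma~\ref{lem:propertiesSbeta}, $\Mnorm{S_\beta^{-1}}{S_\beta} = \max\{1/|\beta'|\}$, and since $\beta$ is expanding this constant is some $\lambda < 1$. Writing $A := \max\{\normBeta{a} : a \in \A\}$ and $B := \max\{\normBeta{b} : b \in \B\}$ (finite since $\A, \B$ are finite), I claim every iterate satisfies $\Q_k \subset \{q : \normBeta{q} \leq \rho\}$ for a suitable fixed radius $\rho$. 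Indeed, suppose inductively $\normBeta{q_{-1}} \leq \rho$ for all elements already in $\Q_k$; a new element $q_0$ is chosen to cover some $x = b + q_{-1}$ with $b \in \B$, $q_{-1} \in \Q_k$, so $\normBeta{x} \leq B + \rho$, and by the estimate above $q_0$ can be chosen (pick any candidate, e.g.\ the one coming from a congruent $a \in \A$) with $\normBeta{q_0} \leq \lambda(B + \rho + A)$. It therefore suffices to pick $\rho$ so that $\lambda(B + \rho + A) \leq \rho$, i.e.\ $\rho \geq \lambda(A+B)/(1-\lambda)$, which is possible precisely because $\lambda < 1$. Starting from $\Q_0 = \{0\}$, which satisfies $\normBeta{0} = 0 \leq \rho$, the induction goes through.

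\textbf{Conclusion.} Let $\Q^\ast := \{q \in \Zbeta : \normBeta{q} \leq \rho\}$; this is a finite set by the $\beta$-norm ball property. The argument above shows that if each extension step in Algorithm~\ref{alg:extendWeightCoefSet} is performed by selecting, for each uncovered $x$, some candidate $q \in C_x$ of minimal $\beta$-norm (which, by step (1), is nonempty, and by the estimate lies in $\Q^\ast$), then every $\Q_k \subset \Q^\ast$. The sequence $\Q_0 \subset \Q_1 \subset \Q_2 \subset \cdots$ is nondecreasing and bounded by the finite set $\Q^\ast$, hence stabilizes: there is $k$ with $\Q_{k+1} = \Q_k =: \Q$, and by construction $\B + \Q_k \subset \A + \beta\Q_{k+1} = \A + \beta\Q$, with $0 \in \Q$. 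This $\Q$ is the desired finite weight coefficients set.

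\textbf{Main obstacle.} The only genuinely delicate point is making the induction on $\Q_k \subset \Q^\ast$ airtight: one must ensure the extension procedure can \emph{always} stay within the ball, which requires both that $C_x \neq \emptyset$ for every relevant $x$ (this is exactly where the hypothesis that $\A$ meets every congruence class modulo $\beta$ is used) and that among the candidates in $C_x$ at least one has controlled $\beta$-norm (this is the contraction estimate via $\lambda < 1$). Everything else — equivalence of norms, finiteness of $\beta$-balls, monotone stabilization — is routine once these two facts are in place.
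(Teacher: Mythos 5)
Your proof is correct and follows essentially the same route as the paper: both use the $\beta$-norm, the contraction bound $\Mnorm{S_\beta^{-1}}{S_\beta}=1/|\gamma|<1$ from Lemma~\ref{lem:propertiesSbeta}, and finiteness of $\beta$-norm balls in $\Zbeta$; your radius $\rho\geq\lambda(A+B)/(1-\lambda)$ is just a slightly coarser version of the paper's $R=C/(|\gamma|-1)$ with $C=\max\normBeta{b-a}$. The only cosmetic difference is that the paper takes $\Q$ to be the entire ball and verifies $\B+\Q\subset\A+\beta\Q$ directly, whereas you run the iterative construction inside the ball and let it stabilize (which is really the content of Theorem~\ref{thm:suffCondPhase1}).
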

\begin{proof}
We use the mapping $\pi: \Zbeta \rightarrow \ZZ^{d}$ and the $\beta$-norm $\normBeta{\cdot}$ to give a~bound on elements of~$\Zbeta$. Let $\gamma$~be the smallest conjugate of~$\beta$ in modulus. Denote $C := \max\{ \normBeta{b-a} \colon a \in \A, b \in \B \}$. Consequently, set
\begin{equation}\label{eq:R+Q}
    R := \frac{C}{|\gamma| - 1} \mathrm{\quad and \quad} \Q := \{q \in \Zbeta \colon \normBeta{q} \leq R\} \,.
\end{equation}
By Lemma~\ref{lem:propertiesSbeta}, we have
$$ \Mnorm{S_\beta^{-1}}{S_\beta} = \max \left\{\frac{1}{|\beta'|} \colon \beta' \text{ is conjugate of } \beta \right\} = \frac{1}{|\gamma|}\,. $$
Also, we have $|\gamma| > 1$, as $\beta$~is an expanding algebraic integer. Since $C > 0$, the set~$\Q$ is non-empty. Any element $x = b + q \in \Zbeta$ with $b \in \B$
and $q \in \Q$ can be written as $x = a + \beta q'$ for some $a \in \A$ and $q' \in \Zbeta$,
due to the presence of at least one representative of each congruence class modulo~$\beta$ in~$\A$. Using the isomorphism~$\pi$ and Lemma~\ref{lem:multiplyBeta}, we may write $\pi(q') = S^{-1}_\beta \cdot \pi(b-a+q)$. We prove that $q'$ is in~$\Q$:
\begin{align*}
    \normBeta{q'}   & = \norm{ \pi \left(q'\right)}{S_\beta} = \norm{S^{-1}_\beta \cdot \pi \left(b - a + q \right)}{S_\beta} \leq \Mnorm{S^{-1}_\beta}{S_\beta} \normBeta{b - a + q} \\
                    & \leq \frac{1}{|\gamma|} \left( \normBeta{b-a} + \normBeta{q} \right) \leq \frac{1}{|\gamma|} \left( C + R \right) = \frac{C}{|\gamma|} \left( 1 + \frac{1}{|\gamma|-1} \right) = R \,.
\end{align*}
Hence $q' \in \Q$, and thus $x = b + q \in \A + \beta \Q$. Since there are only finitely many elements of~$\ZZ^{d}$ bounded by the constant~$R$, the set~$\Q$ must be finite.
\end{proof}

The way how candidates for the weight coefficients are chosen at line~\ref{alg-line:candidates} in Algorithm~\ref{alg:weightCoefSet} is the same as in the proof of Lemma~\ref{lem:suffCondPhase1}. Therefore, the convergence of Phase~1 is guaranteed by the following theorem.

\begin{theorem}\label{thm:suffCondPhase1}
Let $\beta \in \CC$ be an algebraic integer. Let $\A \subset \Zbeta$ be an alphabet containing at least one representative of each congruence class modulo~$\beta$ in~$\Zbeta$, and let $\B \subset \Zbeta$ be an input alphabet. If $\beta$~is expanding, then Phase~1 of the Extending Window Method converges.
\end{theorem}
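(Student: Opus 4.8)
The plan is to reduce convergence of Phase~1 to the finiteness of a single set, exactly the set furnished by Lemma~\ref{lem:suffCondPhase1}. Since $\beta$ is an expanding algebraic integer and $\A$ meets every congruence class modulo~$\beta$ in~$\Zbeta$, that lemma provides a finite set $\Q\subset\Zbeta$ with $\B+\Q\subseteq\A+\beta\Q$, namely $\Q=\{q\in\Zbeta\colon\normBeta{q}\le R\}$ with $R=C/(|\gamma|-1)$, $C=\max\{\normBeta{b-a}\colon a\in\A,\ b\in\B\}$, and $\gamma$ the smallest conjugate of~$\beta$ in modulus. The heart of the argument is then to prove, by induction on~$k$, that every interim set $\Q_k$ produced by Algorithm~\ref{alg:weightCoefSet} satisfies $\Q_k\subseteq\Q$.

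The base case is immediate: $\Q_0=\{0\}\subseteq\Q$ because $\normBeta{0}=0\le R$. For the inductive step, assume $\Q_k\subseteq\Q$ and pick any $x\in\B+\Q_k\subseteq\B+\Q$, say $x=b+q$ with $b\in\B$ and $q\in\Q$. The chain of inequalities carried out in the proof of Lemma~\ref{lem:suffCondPhase1} (via the isomorphism $\pi$, Lemma~\ref{lem:multiplyBeta}, and Lemma~\ref{lem:propertiesSbeta}) produces some $a\in\A$ with $\beta\mid x-a$ and $\normBeta{\frac{x-a}{\beta}}\le R$; equivalently, the candidate set $C_x=\{\frac{x-a}{\beta}\colon a\in\A,\ \beta\mid x-a\}$ computed at line~\ref{alg-line:candidates} contains an element of~$\Q$, so the minimal $\beta$-norm over $C_x$ is at most~$R$. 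Now I would inspect the extension step (Algorithm~\ref{alg:extendWeightCoefSet}, in its $\beta$-norm variant): if $\#C_x=1$, its unique element is forced to be that element of~$\Q$; otherwise the algorithm adds precisely the elements of $C_x$ of minimal $\beta$-norm, which therefore have $\beta$-norm at most~$R$ and hence lie in~$\Q$. Either way only elements of~$\Q$ are added, so $\Q_{k+1}\subseteq\Q$, closing the induction.

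It then remains to conclude: $\Q_0\subseteq\Q_1\subseteq\Q_2\subseteq\cdots$ is a non-decreasing chain of subsets of the finite set~$\Q$, so $\#\Q_0\le\#\Q_1\le\cdots\le\#\Q$ stabilises, and because $\Q_k\subseteq\Q_{k+1}$, equality of cardinalities forces $\Q_k=\Q_{k+1}$ for some~$k$. That is exactly when Algorithm~\ref{alg:weightCoefSet} terminates, and by construction the returned set $\Q=\Q_k$ satisfies $\B+\Q_k\subseteq\A+\beta\Q_{k+1}=\A+\beta\Q_k$, hence is a weight coefficients set in the sense of Definition~\ref{def:weightCoefficientsSet}. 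I expect the only delicate point to be the extension step: the argument as sketched needs the $\beta$-norm version of Algorithm~\ref{alg:extendWeightCoefSet} (the variant designed to mirror the selection made in Lemma~\ref{lem:suffCondPhase1}), since it relies on the added candidates having small $\beta$-norm rather than small absolute value; accommodating the absolute-value variant would merely require enlarging~$\Q$ by a norm-equivalence constant and changes nothing structurally. Everything else is bookkeeping.
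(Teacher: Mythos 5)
Your proof is correct and follows essentially the same route as the paper: invoke Lemma~\ref{lem:suffCondPhase1} for the finite set $\Q$, prove by induction that every interim set $\Q_k$ of Algorithm~\ref{alg:weightCoefSet} stays inside $\Q$, and conclude from the increasing chain of subsets of a finite set. The only difference is that your closing caveat is unnecessary: since $C=\max\{\normBeta{b-a}\colon a\in\A,\ b\in\B\}$ ranges over \emph{all} pairs, the inequality chain in the lemma's proof bounds \emph{every} element of $C_x$ by $R$ (not just one), so $\Q_{k+1}\subseteq\Q$ holds for any selection variant of Algorithm~\ref{alg:extendWeightCoefSet}; this also sidesteps your proposed norm-equivalence fix for the absolute-value variant, which would actually be delicate since bounded absolute value need not bound the $\beta$-norm on $\Zbeta$ (e.g.\ for real $\beta$, where $\Zbeta$ is dense in $\RR$).
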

\begin{proof}
Let $R > 0$ be the constant and $\Q \subset \Zbeta$ the finite set from \eqref{eq:R+Q} in Lemma~\ref{lem:suffCondPhase1}, for the alphabet~$\A$ and the input alphabet~$\B$. We prove by induction that all intermediate weight coefficient sets~$\Q_k$ in Algorithm~\ref{alg:weightCoefSet} are subsets of the finite set~$\Q$. Let us start with $\Q_0 = \{0\}$, whose elements are bounded by any positive constant. Suppose that the intermediate weight coefficients set~$\Q_k$ has elements bounded by the constant~$R$. We see from the proof of Lemma~\ref{lem:suffCondPhase1} that the candidates for the set~$\Q_{k+1}$ obtained in~$C_x$ at line~\ref{alg-line:candidates} of Algorithm~\ref{alg:weightCoefSet} are also bounded by~$R$. Thus, the next intermediate weight coefficients set~$\Q_{k+1}$ has elements bounded by the constant~$R$ as well, i.e., $\Q_{k+1} \subset \Q$. Since $\#\Q$ is finite and $\Q_0 \subsetneq \Q_1 \subsetneq \Q_2 \subsetneq \cdots \subset \Q$, the Phase~1 successfully ends once we obtain $\Q_k = \Q_{k+1}$.
\end{proof}

\subsection{Convergence of Phase 2}\label{sub-sec:EWM-convergence-phase2}

We do not have any straightforward conditions, sufficient or necessary, for convergence of Phase~2 of the EWM, based on properties of the base~$\beta$ or alphabet~$\A$. Nevertheless, the non-convergence can be controlled during the course of the algorithm. An easy check of non-convergence can be done by finding the weight coefficient sets~$\Q_{[b, \dots, b]}$ for each $b \in \B$. For that purpose, we introduce a notion of so-called \emph{stable Phase~2},
which is then used also in the main result of this section: the control of non-convergence during Phase~2 is transformed into searching for a cycle in a directed graph.\\

Firstly, we mention several equivalent conditions of non-convergence of Phase~2:

\begin{lemma}\label{lem:equivalentStatementsForNonConvergenePhaseTwo}
The following statements are equivalent for the EWM applied on a numeration system $(\beta, \A)$ and an input alphabet~$\B$:
\begin{enumerate}[i)]
	\item Phase~2 of the EWM does not converge;
	\item $(\forall \, k \in \NN) \, (\exists \, \tupleo{k} \in \B^{k+1}) \, (\#\Qwo{k} \geq 2)$;
	\item $(\exists \, (w_{-j})_{j\geq 0} \in \B^\NN) (\exists \, k_0 \in \NN) (\forall k \geq k_0) (\#\Qwo{k} = \#\Qwo{(k-1)} \geq 2)$.
\end{enumerate}
\end{lemma}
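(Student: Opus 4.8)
The plan is to prove the cycle $i)\Rightarrow ii)\Rightarrow iii)\Rightarrow i)$, with $ii)\Rightarrow iii)$ the only step needing real work; the implications $i)\Leftrightarrow ii)$ and $iii)\Rightarrow ii)$ are essentially bookkeeping. Two structural facts about the sets $\Qwo{k}$, both read off from Definition~\ref{def:weightFunction} and the construction in Algorithm~\ref{alg:weightFunction}, will be used throughout: every $\Qwo{k}$ is nonempty (it is a subset of~$\Q$ chosen so that $w_0 + \Qw{1}{k} \subset \A + \beta\Qwo{k}$, and the left-hand side is nonempty), and $\Qwo{k} \subseteq \Qwo{(k-1)}$, so that appending one more neighbour~$w_{-k}$ on the right can only shrink the set; in particular, if one input tuple extends another on the right, the set attached to the longer tuple has cardinality at most that attached to the shorter one.

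For $i)\Leftrightarrow ii)$ I would just unwind the definition of convergence. Phase~2 converges exactly when the \textbf{while} loop of Algorithm~\ref{alg:weightFunction} halts, i.e.\ when $\max\{\#\Qwo{k} \colon \tupleo{k}\in\B^{k+1}\}\le 1$ for some~$k$; since each $\Qwo{k}$ is nonempty this maximum is always~$\ge 1$, so the loop halts iff, for some~$k$, $\#\Qwo{k}=1$ for every $\tupleo{k}\in\B^{k+1}$. Negating this statement gives precisely $ii)$, and this same negation is exactly what $i)$ (non-convergence) asserts, so $i)\Leftrightarrow ii)$. For $iii)\Rightarrow ii)$, take the sequence $(w_{-j})_{j\ge0}$ and the index~$k_0$ from~$iii)$; then $\#\Qwo{k}\ge 2$ holds for $k\ge k_0$ by hypothesis, and for $0\le k<k_0$ monotonicity gives $\#\Qwo{k}\ge\#\Qwo{(k_0-1)}\ge 2$, so each prefix of the sequence witnesses~$ii)$.

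The heart of the matter, and the step I expect to be the main obstacle, is $ii)\Rightarrow iii)$, which is a compactness (König's lemma) argument. Call $\tupleo{k}\in\B^{k+1}$ \emph{bad} if $\#\Qwo{k}\ge 2$. By the monotonicity fact, the length-$k$ prefix $\tupleo{(k-1)}$ of a bad $(k+1)$-tuple is again bad; hence the bad tuples, ordered by ``is a prefix of'', form a forest with at most $\#\B$ roots (the bad one-tuples~$(w_0)$), and by~$ii)$ it contains a bad tuple of every length, so it has infinitely many vertices. Adjoining a virtual root above all the bad one-tuples makes this a finitely branching infinite tree, so König's lemma yields an infinite branch: a sequence $(w_{-j})_{j\ge0}\in\B^{\NN}$ every prefix of which is bad. (Equivalently: the cylinder sets $\{v\in\B^{\NN}\colon \text{the length-}(k+1)\text{ prefix of }v\text{ is bad}\}$ are nonempty, clopen and nested decreasing, so by compactness of $\B^{\NN}$ their intersection is nonempty.) Along this sequence the integer sequence $(\#\Qwo{k})_{k\ge0}$ is non-increasing and bounded below by~$2$, hence eventually constant; so there is $k_0$ with $\#\Qwo{k}=\#\Qwo{(k-1)}\ge 2$ for all $k\ge k_0$, which is~$iii)$. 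This closes the cycle.

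The genuinely delicate points are all in the last paragraph: one must set up the tree so that badness is inherited when the \emph{last} coordinate~$w_{-k}$ is dropped (not the first), check finite branching to legitimise König's lemma --- this is exactly where finiteness of~$\B$ enters --- and then observe that a non-increasing sequence of positive integers stabilises, which is what converts ``some infinite bad branch exists'' into the precise cardinality-stabilisation statement of~$iii)$. Everything else is routine unwinding of the definitions.
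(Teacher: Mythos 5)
Your proof is correct and follows essentially the same route as the paper: both hinge on the nesting $\Qwo{k}\subseteq\Qwo{(k-1)}$, extract an infinite input sequence all of whose prefixes have $\#\Qwo{k}\ge 2$, and conclude via eventual constancy of a non-increasing integer sequence. The only difference is that you spell out the K\"onig's lemma/compactness argument that the paper compresses into the single clause ``there is an infinite sequence \dots since $\Qwo{k}\supset\Qwo{(k+1)}$''; your version is more careful but not a different proof.
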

\begin{proof}
\textit{i)}$\iff$\textit{ii):} The {\bf while} loop in Algorithm~\ref{alg:weightFunction} ends if and only if there is $k\in\NN$ such that $\#\Qwo{k}=1$ for all $\tupleo{k} \in \B^{k+1}$.\\
\textit{ii)}$\iff$\textit{iii):} For $\Rightarrow$, there is an infinite sequence $(w_{-j})_{j \geq 0}$ such that $\#\Qwo{k} \geq 2$ for all $k \in \NN$, since $\Qwo{k} \supset \Qwo{(k+1)}$. Hence, the sequence of integers $(\#\Qwo{k})_{k\geq 0}$ is eventually constant. The opposite implication is trivial.
\end{proof}

We need to ensure that the choice of a possible weight coefficients set $\Qwo{k} \subset \Qwo{(k-1)}$ is determined by the input digits~$(w_0, \ldots, w_{-k}) \in \B^{k+1}$ and the set $\Qw{1}{k}$, while the influence of the set $\Qwo{(k-1)}$ is limited. This is formalized in the following definition:

\begin{definition}
Let~$\B$ be an alphabet of input digits. We say that \emph{Phase~2} of the EWM is \emph{stable} if
$$ \Qw{1}{k} = \Qw{1}{(k-1)} \implies \Qwo{k} = \Qwo{(k-1)} $$
for all $k \in \NN, k \geq 2$ and for all $\tupleo{k} \in \B^{k+1}$.
\end{definition}

Although this definition may seem restrictive, it is actually a natural way how an algorithm should be designed. The set~$\Qwo{k}$ is constructed so that
$$ \B + \Qw{1}{k} \subset \A + \beta \Qwo{k} \, , $$
i.e., there is no reason to choose the set $\Qwo{k}$ as a proper subset of $\Qwo{(k-1)}$, as we know that
$$ \B + \underbrace{\Qw{1}{(k-1)}}_{= \Qw{1}{k}} \subset \A + \beta \Qwo{(k-1)} \,, $$
and $\Qw{1}{(k-1)}$ was chosen as sufficient. In other words, if $\Qwo{k}$ is a proper subset of $\Qwo{(k-1)}$, then the set $\Qwo{(k-1)}$ could have been chosen as $\Qwo{k}$ in the previous iteration step already.

We may guarantee that Phase~2 is stable by wrapping any way of choice of the set~$\Qwo{k}$ into a simple {\bf while} loop, see~\cite{JLe-thesis} for details.\\

Now we use the fact that finiteness of Phase~2 implies existence of a length $m \in \NN$ such that, for every~$b \in \B$, the set $\Qb{m}$ contains only one element; where $\Qb{m}$ is a shorter notation for $ \Q_{[\underbrace{\scriptstyle b, \dots, b}_m]} \,. $
The following theorem shows that $\#\Qb{m}$~must decrease every time we increase the length~$m$, otherwise Phase~2 does not converge.

\begin{theorem}\label{thm:bbbCondition}
If $m_0 \in \NN$ and $b\in\B$ are such that the sets $\Qb{m_0}$ and~$\Qb{m_0-1}$ produced by a~stable Phase~2 of the EWM have the same size, then
$$ \#\Qb{m} = \#\Qb{m_0} \qquad \mathrm{\quad for\ every \quad} m \geq m_0-1\,. $$
Particularly, if $\#\Qb{m_0} \geq 2$, then the Phase~2 of the EWM does not converge.
\end{theorem}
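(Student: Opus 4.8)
The plan is to reduce the whole statement to a single set equality forced by finiteness, and then to propagate that equality along the constant input word $b\,b\,b\cdots$ by means of stability of Phase~2. First I would unwind the shorthand $\Qb{m}$: it is exactly the set $\Qwo{(m-1)}$ evaluated at the constant tuple $(b,\dots,b)$ of length $m$, while $\Qw{1}{(m-1)}$ at the same argument equals $\Qb{m-1}$. Consequently, for this particular word, the general properties of the sets built in Algorithm~\ref{alg:weightFunction} specialise to the nested chain
\begin{equation*}
  \Q \;\supseteq\; \Qb{1} \;\supseteq\; \Qb{2} \;\supseteq\; \Qb{3} \;\supseteq\; \cdots
\end{equation*}
(each inclusion being $\Qwo{k}\subseteq\Qwo{(k-1)}$ applied to the truncated tuple), and the definition of a \emph{stable} Phase~2 specialises to the implication: for every $k\ge 2$, if $\Qb{k}=\Qb{k-1}$ then $\Qb{k+1}=\Qb{k}$.

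Next I would argue as follows. The weight coefficients set $\Q$ is finite by Definition~\ref{def:weightCoefficientsSet}, hence so is every $\Qb{m}$. Since $\Qb{m_0}\subseteq\Qb{m_0-1}$ and $\#\Qb{m_0}=\#\Qb{m_0-1}<\infty$, these two finite sets must coincide, i.e.\ $\Qb{m_0}=\Qb{m_0-1}$. Feeding this into the specialised stability implication with $k=m_0$ (admissible because $m_0\ge 2$) yields $\Qb{m_0+1}=\Qb{m_0}$; iterating with $k=m_0+1,m_0+2,\dots$ gives, by a straightforward induction on $m$, that $\Qb{m}=\Qb{m_0}$ for every $m\ge m_0-1$, and in particular $\#\Qb{m}=\#\Qb{m_0}$ for all such $m$. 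The borderline value $m_0=1$ (if it is admitted at all) is read with the convention $\Qb{0}:=\Q$ and handled the same way, provided the stabilising wrapper is also applied at the step $k=1$.

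For the ``particularly'' clause, assume $\#\Qb{m_0}\ge 2$. For $m\ge m_0-1$ we have just shown $\#\Qb{m}=\#\Qb{m_0}\ge 2$; for $1\le m<m_0-1$ the inclusion $\Qb{m}\supseteq\Qb{m_0-1}$ gives $\#\Qb{m}\ge\#\Qb{m_0-1}\ge 2$ as well. Hence, for every $k\in\NN$, the tuple $\tupleo{k}\in\B^{k+1}$ with $w_0=\dots=w_{-k}=b$ satisfies $\#\Qwo{k}=\#\Qb{k+1}\ge 2$; this is precisely statement~ii) of Lemma~\ref{lem:equivalentStatementsForNonConvergenePhaseTwo}, so Phase~2 of the EWM does not converge. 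Equivalently, the guard $\max\{\#\Qwo{k}\colon\tupleo{k}\in\B^{k+1}\}>1$ of the \textbf{while}-loop in Algorithm~\ref{alg:weightFunction} never becomes false.

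The only genuine difficulty is bookkeeping: correctly identifying $\Qb{m}$ with the indexed objects $\Qwo{k}$ and $\Qw{1}{k}$ (minding the off-by-one between ``a word of length $m$'' and ``$\Qwo{k}$ with $k=m-1$''), and checking that the index appearing in the stability implication stays in the legal range $k\ge 2$ throughout the induction — which is exactly why the hypothesis compares $\Qb{m_0}$ with $\Qb{m_0-1}$ and the conclusion only starts at $m=m_0-1$. Once stability is available in this specialised form, the mathematical content is just the one-line observation that equal finite cardinalities together with an inclusion force set equality, which stability then propagates to all subsequent window lengths.
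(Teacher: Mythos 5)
Your proposal is correct and follows essentially the same route as the paper: the inclusion $\Qb{m_0}\subset\Qb{m_0-1}$ together with equal finite cardinality forces set equality, and stability then propagates this equality to all longer constant windows, after which non-convergence follows from Lemma~\ref{lem:equivalentStatementsForNonConvergenePhaseTwo} (you invoke item~ii) where the paper invokes item~iii) with the constant sequence $(b)_{j\ge 0}$, an immaterial difference). Your extra bookkeeping on index ranges and the edge case $m_0=1$ is more careful than the paper's proof but does not change the argument.
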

\begin{proof}
As $\Qb{m_0} \subset \Qb{m_0-1}$, the assumption of the same size implies $ \Qb{m_0} = \Qb{m_0-1} \,. $ By the assumption that Phase~2 is stable, we have
$$ \Qb{m_0} = \Qb{m_0-1}    \quad \implies \quad    \Qb{m_0+1} = \Qb{m_0}   \quad \implies \quad    \Qb{m_0+2} = \Qb{m_0+1} \quad \implies \quad    \cdots $$
This implies the statement. If $\#\Qb{m_0} \geq 2$, then statement \textit{iii)} in Lemma~\ref{lem:equivalentStatementsForNonConvergenePhaseTwo} holds for the sequence $(b)_{j \geq 0}$.
\end{proof}

We use this result as follows: For all input digits~$b \in \B$, we run Algorithm~\ref{alg:weightFunction} limited only to $k$-tuples $(b, \ldots, b) \in \B^k$. In every iteration~$k$, we check whether $\Qb{k}$ is smaller than $\Qb{k-1}$. If not, then it does not converge for the input $b\cdots b$, and hence the original EWM with input alphabet~$\B$ does not converge either. This check is linear in the length of the window, and thus it is fast.\\

In general, it happens that $\Qwo{k} = \Qwo{(k-1)}$ for some combination of input digits $\tupleo{k}\in\B^{k+1}$, and Phase~2 of the EWM still does converge. Thus, a condition which signifies non-convergence during Phase~2 is more complicated. It can be formulated as searching for an infinite path in a so-called \emph{Rauzy graph}:

\begin{definition}
Let $\B$ be an alphabet of input digits, and let Phase~2 of the EWM be stable.
Consider $k \in \NN, k \geq 2$. We set
\begin{align*}
	V_k &:= \left\{ \tuple{1}{k} \in \B^k \colon \#\Qw{1}{k} = \#\Qw{1}{(k-1)} \right\} \, \mathrm{ and} \\
	E_k &:= \left\{ \tuple{1}{k} \rightarrow \tuple[w']{1}{k} \in V_k \times V_k \colon \vphantom{\tuple[w']{1}{(k-1)}}\right. \\
		& \qquad \qquad \qquad \qquad	\left. \tuple{2}{k}=\tuple[w']{1}{(k-1)}\right\} \,.
\end{align*}
The directed graph \emph{$G_k = (V_k, E_k)$} is called \emph{Rauzy graph of Phase~2 (for the length~$k$)}.
\end{definition}

This term comes from combinatorics on words. The vertices of the Rauzy graph $G_k$ are combinations of input digits for which the size of their possible weight coefficients sets did not decrease with an increment of the length~$k$; whereas in combinatorics on words, the vertices are given as factors of some language. But the directed edges are placed in the same manner -- if some combination of digits without the first digit equals another combination without the last digit.\\

The structure of the Rauzy graph~$G_k$ signifies whether the non-decreasing combinations are such that they cause non-convergence of Phase~2. Existence of an infinite walk in $G_k$ implies that Phase~2 does not converge:

\begin{theorem}\label{thm:infinitePathInRauzyGraph}
Let Phase~2 of the EWM be stable. If there exists $k_0\in\NN, k_0\geq 2$, and $\tupleo{k_0}\in\B^{k_0+1}$ such that
\begin{enumerate}[i)]
	\item $\#\Qwo{(k_0-1)}>1$ and
	\item there is an infinite walk $(\tuple[w^{(i)}]{1}{k_0})_{i \geq 1}$ in $G_{k_0}$ starting in the vertex $\tuple[w^{(1)}]{1}{k_0} = \tuple{1}{k_0} \, ,$
\end{enumerate}
then Phase~2 does not converge.
\end{theorem}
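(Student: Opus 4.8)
The plan is to show that an infinite walk in $G_{k_0}$ gives rise to an infinite input sequence $(w_{-j})_{j\geq 0}\in\B^\NN$ along which the sizes $\#\Qwo{k}$ never drop below $2$, which by statement \textit{iii)} of Lemma~\ref{lem:equivalentStatementsForNonConvergenePhaseTwo} (or directly by \textit{ii)}) forces non-convergence of Phase~2. So first I would take the infinite walk $(\tuple[w^{(i)}]{1}{k_0})_{i\geq 1}$ in $G_{k_0}$ and use the edge condition in the definition of $E_{k_0}$ -- namely $\tuple[w^{(i)}]{2}{k_0}=\tuple[w^{(i+1)}]{1}{(k_0-1)}$ -- to ``glue'' the consecutive vertices into a single bi-infinite-to-the-right word: set $w_0:=w^{(1)}_{-1}$ and, reading off successive first letters along the walk, define the whole sequence $(w_{-j})_{j\geq 0}$ so that for every $i\geq 1$ the block $\tuple{i}{(k_0+i-1)}$ (a window of length $k_0$ shifted $i-1$ places to the right) coincides with the walk vertex $\tuple[w^{(i)}]{1}{k_0}$. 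I also need to prepend the finitely many digits $w_{-1},\dots,w_{-(k_0-1)}$ supplied by $\tupleo{k_0}$ so that the window of length $k_0$ ending at $w_0$ equals the starting vertex; hypothesis \textit{ii)} guarantees the starting vertex is $\tuple{1}{k_0}$, which is exactly what $\tupleo{k_0}$ provides, so the two descriptions are consistent.

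Next, for this sequence I would track the quantities $\Qwo{k}$ for $k=k_0-1,k_0,k_0+1,\dots$. By hypothesis \textit{i)} we have $\#\Qwo{(k_0-1)}>1$, i.e.\ $\geq 2$. The key inductive claim is that $\#\Qwo{k}\geq 2$ for all $k\geq k_0-1$. For the induction step, suppose $\#\Qwo{(k-1)}\geq 2$ for some $k\geq k_0$. The window $\tuple{1}{k}$ of this sequence equals some vertex of the walk, say $\tuple[w^{(i)}]{1}{k_0}$ extended appropriately; being a vertex of $V_{k_0}$ means $\#\Qw{1}{k_0}=\#\Qw{1}{(k_0-1)}$ for the relevant sub-windows. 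Since all these sets are nested and stabilize, and since every sub-window appearing here corresponds to a vertex of $G_{k_0}$ (the defining ``non-decreasing'' property of $V_{k_0}$), stability of Phase~2 propagates equality: from $\Qw{1}{k}=\Qw{1}{(k-1)}$ we get, via the stability implication $\Qw{1}{k}=\Qw{1}{(k-1)}\implies\Qwo{k}=\Qwo{(k-1)}$ applied along the window, that $\Qwo{k}=\Qwo{(k-1)}$, hence $\#\Qwo{k}=\#\Qwo{(k-1)}\geq 2$. This closes the induction, and then $(w_{-j})_{j\geq 0}$ witnesses condition \textit{ii)} of Lemma~\ref{lem:equivalentStatementsForNonConvergenePhaseTwo}, so Phase~2 does not converge.

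The main obstacle I expect is the bookkeeping that connects the window-condition defining membership in $V_{k_0}$ (which only says the size did not decrease when going from length $k_0-1$ to $k_0$ for that particular $k_0$) to the stronger statement I need, namely that along the constructed infinite word the sizes never decrease for any $k\geq k_0$. The bridge is precisely the assumption that Phase~2 is \emph{stable}: stability turns a single non-decrease at length $k_0$ into a permanent non-decrease, exactly in the spirit of Theorem~\ref{thm:bbbCondition} (which is the special case where the walk is the constant loop at $(b,\dots,b)$). So I would spell out carefully that every length-$k_0$ sub-window occurring in the tail of $(w_{-j})$ is a vertex of $G_{k_0}$ -- this is where the edge relation of $E_{k_0}$ is used, guaranteeing that shifting the window by one always lands on another vertex -- and then invoke stability inductively on the length. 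The rest -- verifying the gluing is well-defined and that $q(0,\dots,0)=0$ is not violated (we never claim the all-zero sequence; the witness sequence may certainly contain nonzero digits) -- is routine.
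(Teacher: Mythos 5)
Your proposal is correct and follows essentially the same route as the paper: glue the walk vertices into an infinite input word via the edge relation of $G_{k_0}$, use membership in $V_{k_0}$ of each shifted length-$k_0$ window to get equality of the deep carry sets, propagate that equality to position $0$ by repeated application of stability, and conclude via condition \textit{iii)} of Lemma~\ref{lem:equivalentStatementsForNonConvergenePhaseTwo}. The paper organizes the propagation as an explicit chain of implications for each shift $\ell$ rather than as an induction on $k$, but the substance is identical.
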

\begin{proof}
Let us set $(w_k)_{k\geq 0} := w_0, w_1^{(1)}, \dots , w^{(1)}_{k_0-1}, w^{(1)}_{k_0}, w_{k_0}^{(2)},w_{k_0}^{(3)},w_{k_0}^{(4)},\dots$; and we prove that
$$\#\Qwo{k} = \#\Qwo{(k_0-1)} > 1 \mathrm{\quad for\ all \quad} k \geq k_0 - 1 \,,$$
so the condition \textit{iii)} in Lemma~\ref{lem:equivalentStatementsForNonConvergenePhaseTwo} is satisfied. Let $\ell \in \NN$. Since $\tuple{(1+\ell)}{(k_0+\ell)}$ is a vertex of~$G_{k_0}$, the set $\Qw{\ell}{(k_0 + \ell)}$ equals $\Qw{\ell}{(k_0 + \ell - 1)}$. As Phase~2 is stable, we have
\begin{align*}
                    & \Qw{\ell}{(k_0 + \ell)} = \Qw{\ell}{(k_0 + \ell-1)} \\
    \implies \quad  & \Qw{(\ell - 1)}{(k_0 + \ell)} = \Qw{(\ell - 1)}{(k_0 + \ell - 1)} \\
                    & \quad \vdots \\
    \implies \quad  & \Qw{1}{(k_0 + \ell)} = \Qw{1}{(k_0 + \ell - 1)}  \\
    \implies \quad  & \Qwo{(k_0 + \ell)} = \Qwo{(k_0 + \ell - 1)} \,.
\end{align*}
Hence, $\#\Qwo{k} = \#\Qwo{(k_0 - 1)} > 1$ for all $k \geq k_0 - 1$.
\end{proof}

Let us remark that existence of an infinite walk in a finite graph is equivalent to existence of a cycle in the graph. Thus, if there is an infinite walk, we may find another one whose sequence of vertices is eventually periodic. This fact can be used for revealing non-convergence of Phase~2 during its run. Namely, in $k$-th iteration, we construct the Rauzy graph $G_{k+1}$ and check whether there is an infinite walk in~$G_k$ starting in $\tuple{1}{k_0}$. This modification of Phase~2 is elaborated in~\cite{JLe-thesis}.

\section{Implementation of Extending Window Method}\label{sec:EWM-implementation}

We implement the Extending Window Method (EWM) in a~more general setting than explained above. Let $\omega$ be an algebraic integer. Parallel addition is searched for a~numeration system $(\beta,\A)$ such that $\beta \in \Zomega$ and $\A \subset \Zomega$. All elements of~$\Zomega$ are algebraic integers, since $\omega$~is an algebraic integer, and clearly $\Zbeta \subset \Zomega$. We run the EWM just as described above, but all steps being now computed in~$\Zomega$, instead of~$\Zbeta$.

Let us remark that, for instance, the necessary condition from Theorem~\ref{thm:modulo-beta&beta-1}, that $\A$~must contain all representatives modulo $\beta-1$, is not valid in~$\Zomega$ anymore, since congruence classes in~$\Zomega$ and~ $\Zbeta$ are different if $\Zbeta \subsetneq \Zomega$ -- see the discussion in conclusion of~\cite{JLe-article}. Nevertheless, the statements in Theorem~\ref{thm:betaMustBeExpanding} ($\beta$~must be expanding), Theorem~\ref{thm:suffCondPhase1} (convergence of Phase~1),
and Theorems~\ref{thm:bbbCondition} and \ref{thm:infinitePathInRauzyGraph} (control of convergence of Phase~2) can be proven also for~$\Zomega$, see~\cite{JLe-thesis}.\\

Our implementation of the EWM in SageMath can be downloaded from~\cite{githubParAdd}. User information is provided in \verb+readme.md+, and more details about the implementation can be found also in~\cite{JLe-thesis}. We remark that the program allows to choose an algebraic integer~$\omega$ by means of its minimal polynomial~$m_{\omega}$, then the base $\beta \in \Zomega$ and the alphabet $\A \subset \Zomega$. An input alphabet~$\B \subsetneq \A+\A$ or a block length $k \in \NN$ for $k$-block approach may be specified.

One can also select various methods of choice in Phase~1 and in Phase~2. The Algorithm~\ref{alg:extendWeightCoefSet} above describes methods encoded by \verb+'1b'+ (absolute value) and \verb+'1d'+ ($\beta$-norm) for Phase~1. The ways of choice in the Algorithm~\ref{alg:minimalSet} for Phase~2 can be selected by \verb+'2b'+ (center of gravity) and \verb+'2d'+ ($\beta$-norm). Description of other methods available in the program implementation can be found in~\cite{JLe-thesis} or directly in the source code.\\

Before starting Phase~2, the program checks whether it converges for inputs consisting of repetition of a single digit $(b, \ldots, b)$, and stops if not, according to Theorem~\ref{thm:bbbCondition}.
During the computation of Phase~2, Rauzy graphs are constructed, and the computation stops if the conditions of Theorem~\ref{thm:infinitePathInRauzyGraph} are satisfied. In other words, non-finiteness of the EWM can be revealed by these checks.\\

Results of the program for selected examples from Section~\ref{sec:EWM-results} are available at~\cite{JLe-results}. Various combinations of methods in Phase~1 and Phase~2 are included. Log-files provide information about the course of the computations, and the resulting weight functions~$q$ are stored in CSV format when the EWM is successful.

\section{Results Obtained by Extending Window Method}\label{sec:EWM-results}

This chapter shows selected results of the Extending Window Method (EWM). Firstly, we focus on numeration systems with parallel addition algorithms known from previous works of various authors -- we investigate whether the EWM delivers the same algorithms as they have developed manually. Then, we describe a set of new results -- parallel addition algorithms not known so far.

\subsection{Integer Bases}\label{sub-sec:EWM-results-integer}

Let us start with positive integer bases $\beta = b \in \NN$, $b \geq 2$, which certainly are algebraic integers; and they have no algebraic conjugates other than themselves, so no conjugates of modulus~$\leq 1$. We apply the EWM on (several samples of) numeration systems with these bases, analyzed manually in previous works:

\begin{itemize}

    \item A.~Avizienis -- bases $\beta = b \geq 3$, with (symmetric) alphabet $\A = \{ -a, \ldots, 0, \ldots, a \}$, using the smallest possible $a = \lceil (\beta+1)/2 \rceil$: The EWM provides the same parallel addition algorithms as introduced by A.~Avizienis in~\cite{Avizienis}. This was tested on a sample of numeration systems $(3, \{-2, \ldots, +2\})$, $(4, \{-3, \ldots, +3\})$, $(7, \{-4, \ldots, +4\})$, or $(10, \{-6, \ldots, +6\})$; and the pattern of the resulting parallel addition algorithms shows that the EWM would work analogously and correctly, using any positive integer $\beta \in \NN$, $\beta \geq 3$. The key parameters of the parallel addition algorithms obtained here are as follows:
        \begin{itemize}
            \item weight coefficients $q_j = q_j (w_j) \in \Q = \{0, \pm 1\}$ depend on one position only, and thus
            \item digits of the sum $z_j = z_j(w_j, q_j, q_{j-1}) = z_j(w_j, w_{j-1})$ are $2$-local function, with memory $r = 1$.
        \end{itemize}

    \item C.~Y.~Chow, J.~E.~Robertson -- even bases $\beta = b = 2a$ with $a\in\NN$, $a\geq1$, and with (symmetric) alphabet $\A = \{ -a, \ldots, 0, \ldots, a \}$: Again, the EWM delivers the same parallel addition algorithms as published earlier in~\cite{ChowRobertson} by Chow \& Robertson for the tested sample of numeration systems $(2, \{-1, 0, +1\})$, $(4, \{-2, \ldots, +2\})$ or $(10, \{-5, \ldots, +5\})$, and would do the same with any positive even base $\beta = 2a$. The key parameters of these parallel addition algorithms are as follows:
        \begin{itemize}
            \item weight coefficients $q_j = q_j (w_j, w_{j-1}) \in \Q = \{0, \pm 1\}$ depend on two positions (due to smaller $\#\A$),
            \item digits of the sum $z_j = z_j(w_j, q_j, q_{j-1}) = z_j(w_j, w_{j-1}, w_{j-2})$ are $3$-local function, with memory $r = 2$.
        \end{itemize}

    \item B.~Parhami -- any positive integer base $\beta \geq 2$, with alphabet $\A = \{ -d, \ldots, 0, \ldots, b-d \}$, $0 \leq d \leq b$, of the minimal size $\# \A = \beta+1$ for parallel addition: Algorithms published in~\cite{Parhami,FrPeSv2} for these numeration systems are now obtained equally also by the EWM, for any choice of $d \in \{0, \ldots, b\}$, i.e., for all shapes of alphabets in question (positive or negative or mixed, symmetric or non-symmetric). This was tested explicitly on numeration systems $(2, \{0, 1, 2\})$, $(3, \{-1, \ldots, 2\})$, $(4, \{-3, \ldots, +1\})$, or $(7, \{-4, \ldots, +3\})$, and works in general, with parameters:
        \begin{itemize}
            \item weight coefficients $q_j = q_j (w_j, w_{j-1}) \in \Q = \{0, \pm 1\}$ or $\Q = \{0, 1, 2\}$ or $\Q = \{-2, -1, 0\}$, depending on two positions (due to the minimal alphabet size $\#\A$ for parallel addition),
            \item digits of the sum $z_j = z_j(w_j, q_j, q_{j-1}) = z_j(w_j, w_{j-1}, w_{j-2})$ are $3$-local function, with memory $r = 2$.
        \end{itemize}

    \item Also for negative integer bases $\beta = -b \in \ZZ$, $b \geq 2$, we can apply the EWM, because they are algebraic integers without conjugates of modulus $\leq 1$. Again, we focus on alphabets $\A = \{ -d, \ldots, 0, \ldots, b-d \}$, $0 \leq d \leq b$, of the minimal size $\#\A = b+1$ for parallel addition. Testing was done on numeration systems $(-2, \{0, 1, 2\})$, $(-3, \{-1, \ldots, 2\})$, $(-4, \{-3, \ldots, +1\})$, or $(-7, \{-4, \ldots, +3\})$; and also here the EWM produced the same algorithms as derived manually earlier in~\cite{FrPeSv2}, with parameters:
        \begin{itemize}
            \item weight coefficients $q_j = q_j (w_j, w_{j-1}) \in \Q = \{0, \pm 1\}$ or $\Q = \{0, 1, 2\}$ or $\Q = \{-2, -1, 0\}$, depending on two positions (due to the minimal alphabet size $\#\A$ for parallel addition),
            \item digits of the sum $z_j = z_j(w_j, q_j, q_{j-1}) = z_j(w_j, w_{j-1}, w_{j-2})$ are $3$-local function, with memory $r = 2$.
        \end{itemize}

\end{itemize}

Table~\ref{tab:parallel-addition-algo_integer-bases} summarizes these results, together with the basic parameters of the respective numeration systems and parallel addition algorithms.

\begin{table}[h]
\begin{center}
\begin{tabular}{|c|c|c|c|c|c|}
\hline
    Base                        & Minimal                   & Alphabet  & Alphabet              & Locality      & Same \\
    $\beta$                     & polynomial $m_{\beta}$    & $\A$      & size $\#\A$           & $p = 1 + r$   & alg. as \\
\hline \hline

    \textbf{Avizienis}          & $X-b$     & $\{ -a, \ldots, a \}$     & $\#\A = 2 \lfloor \frac{\beta}{2} \rfloor + 3$ & \multirow{2}{*}{$p=2$} & \multirow{2}{*}{\cite{Avizienis}} \\
    \cline{1-2}
    \multicolumn{2}{|c|}{$\beta = b, b \geq 3, b \in \NN$}  & $a = \lceil \frac{b+1}{2} \rceil$ & non-minimal $\#\A$  &           &  \\
\hline \hline

    \textbf{Chow \& Robertson}  & $X-2a$    & \multirow{2}{*}{$\{ -a, \ldots, a \}$}     & $\#\A = \beta+1$      & \multirow{2}{*}{$p=3$}         & \multirow{2}{*}{\cite{ChowRobertson}}\\
    \cline{1-2}
    \multicolumn{2}{|c|}{$\beta = 2a, a\in\NN$}             &           & minimal $\#\A$        &               &  \\
\hline \hline

    \textbf{Parhami}            & $X-b$     & $\{ -d, \ldots, b-d \}$   & $\#\A = \beta+1$      & \multirow{2}{*}{$p=3$}         & \multirow{2}{*}{\cite{Parhami}} \\
    \cline{1-2}
    \multicolumn{2}{|c|}{$\beta = b, b \geq 2, b\in\NN$}    & $0 \leq d < b$    & minimal $\#\A$ &              &  \\
\hline \hline

    \textbf{negative integer}   & $X+b$     & $\{ -d, \ldots, b-d \}$   & $\#\A = |\beta|+1$    & \multirow{2}{*}{$p=3$}         & \multirow{2}{*}{\cite{FrPeSv2}}\\
    \cline{1-2}
    \multicolumn{2}{|c|}{$\beta = -b, b \geq 2, b\in\NN$}   & $0 \leq d < b$    & minimal $\#\A$ &               &  \\
\hline
\end{tabular}
\caption{($1$-block) parallel addition algorithms obtained by the Extending Window Method for numeration systems with integer bases and alphabets are the same as found earlier by Avizienis, Chow \& Robertson, Parhami, or Frougny \& Pelantov\'{a} \& Svobodov\'{a}.}
\label{tab:parallel-addition-algo_integer-bases}
\end{center}
\end{table}

\subsection{Real Bases}\label{sub-sec:EWM-results-real}

Selected classes of real bases were elaborated earlier in~\cite{FrPeSv2} and~\cite{FrHePeSv}, where the parallel addition algorithms are given, even on alphabets of the minimal size for parallel addition. Some of these bases are not eligible for the EWM, since they do not fulfil the core conditions for its usage:

\begin{itemize}

    \item Rational bases $\beta = \pm a/b$ are algebraic numbers, but not algebraic integers.

    \item Real bases $\beta$ being quadratic Pisot numbers, i.e., roots of polynomials $\beta^2 = a\beta \pm b$, are algebraic integers, but they are not expanding, as their algebraic conjugates are smaller than $1$ in modulus.

\end{itemize}

Next, we study other classes of real bases, which are EWM-eligible, being expanding algebraic integers. In the sequel, we show examples of successful EWM applications for such bases, and compare thus obtained results -- parallel addition algorithms -- with those previously found manually (where available).

\subsubsection{Real Bases -- Roots of Integers}\label{sub-sub-sec:EWM-results-real-roots-of-integers}

We limit this class of bases to real roots of integers in the so-called \emph{minimal form}
$$\beta = \sqrt[\ell]{b} \quad \mathrm{with} \quad \ell, b \in \NN, b \geq 2 \, ,$$
where $\beta$ cannot be written as $\beta = \sqrt[\ell'']{c}$, with $\ell = \ell' \ell''$ and $b = c^{\ell'}$, for any $\ell', \ell'', c \in \NN$, $\ell'>1$.\\

Parallel addition algorithms -- as $1$-block $(2\ell + 1)$-local functions -- were obtained manually in~\cite{FrPeSv2} for such bases with alphabets $\A = \{0, \ldots, b\}$ of the minimal possible size $\# \A = b+1$. Also the automated EWM is able to find ($1$-block) parallel addition algorithms for these numeration systems $(\beta, \A)$, see selected examples in Table~\ref{tab:parallel-addition-algo_real-root-of-integer}. The EWM-results on these examples lead to a~hypothesis that, for base $\beta = \sqrt[\ell]{b}$ and alphabet $\A = \{0, \ldots, b\}$, the size of the weight coefficients set obtained during Phase~1 of the EWM is $\#\Q = 3^{\ell}$, and that the $p$-locality of the parallel addition function resulting after Phase~2 is $p = 2\ell + 1$. That is the same $p$-locality as in the parallel addition algorithms provided for these numeration systems in~\cite{FrPeSv2}, nevertheless, the complexity of the algorithms constructed via EWM is a lot higher than of those proposed manually in~\cite{FrPeSv2}.

\begin{table}[h]
\begin{center}
\begin{tabular}{|c|c|c|c|c|c|}
\hline
    Base        & Minimal                   & Alphabet              & Alphabet      & Weight coefficients   & Locality \\
    $\beta$     & polynomial $m_{\beta}$    & $\A$                  & size $\#\A$   & set size $\#\Q$       & $p = 1 + r$ \\
\hline \hline

    $\beta = \sqrt[2]{2}$   & $X^2 - 2$     & $\{0, \ldots, 2\}$    & $\#\A = 3$        & $\#\Q = 9$        & $p = 5$ \\
    \hline

    $\beta = \sqrt[2]{3}$   & $X^2 - 3$     & $\{0, \ldots, 3\}$    & $\#\A = 4$        & $\#\Q = 9$        & $p = 5$ \\
    \hline

    $\beta = \sqrt[2]{5}$   & $X^2 - 5$     & $\{0, \ldots, 5\}$    & $\#\A = 6$        & $\#\Q = 9$        & $p = 5$ \\
    \hline

    $\beta = \sqrt[2]{13}$  & $X^2 - 13$    & $\{0, \ldots, 13\}$   & $\#\A = 14$       & $\#\Q = 9$        & $p = 5$ \\
    \hline

    $\beta = \sqrt[2]{17}$  & $X^2 - 17$    & $\{0, \ldots, 17\}$   & $\#\A = 18$       & $\#\Q = 9$        & $p = 5$ \\
    \hline

    $\beta = \sqrt[2]{21}$  & $X^2 - 21$    & $\{0, \ldots, 21\}$   & $\#\A = 22$       & $\#\Q = 9$        & $p = 5$ \\
    \hline

    $\beta = \sqrt[3]{2}$   & $X^3 - 2$     & $\{0, \ldots, 2\}$    & $\#\A = 3$        & $\#\Q = 27$       & $p = 7$ \\
    \hline

    $\beta = \sqrt[3]{7}$   & $X^3 - 7$     & $\{0, \ldots, 7\}$    & $\#\A = 8$        & $\#\Q = 27$       & $p = 7$ \\
    \hline

    $\beta = \sqrt[4]{5}$   & $X^4 - 5$     & $\{0, \ldots, 5\}$    & $\#\A = 6$        & $\#\Q = 81$       & $p = 9$ \\
    \hline

    $\beta = \sqrt[5]{6}$   & $X^5 - 6$     & $\{0, \ldots, 6\}$    & $\#\A = 7$        & $\#\Q = 243$      & $p = 11$ \\
    \hline \hline

    $\beta = \sqrt[\ell]{b}$   & $X^{\ell} - b$    & $\{0, \ldots, b\}$    & $\#\A = b + 1$    & hypothesis: & hypothesis: \\
    $b \in \NN, b \geq 2$   &       &           &           & $\#\Q = 3^{\ell}$    & $p = 2\ell + 1$ \\

\hline
\end{tabular}
\caption{($1$-block) parallel addition algorithms obtained by the Extending Window Method for selected bases of the form $\beta = \sqrt[\ell]{b}$, with $\ell, b \in \NN$, and non-negative integer alphabets $\A = \{0, \ldots, b\}$ of the minimal size $\#\A = b+1$ for parallel addition. The full look-up tables containing the weight coefficients $q_j = q_j(w_j, w_{j-1}, \ldots, w_{j-(2\ell - 1)})$, assigned to all necessary $2\ell$-tuples of digits $w_j \in \A + \A$, are published in~\cite{JLe-results}. In several cases, we use a~smaller input set $\B \subsetneq \A+\A$, in order to decrease the size of the look-up tables of weight coefficients.}
\label{tab:parallel-addition-algo_real-root-of-integer}
\end{center}
\end{table}

\subsubsection{Real Quadratic Bases}\label{sub-sub-sec:EWM-results-real-quadratic}

For some real quadratic bases (other than the classes already described above), we consider non-integer alphabets, and apply the EWM on such numeration systems. It is successful -- the EWM does provide parallel addition algorithms for those systems, and even on alphabets of minimal size. See Table~\ref{tab:parallel-addition-algo_real-quadratic} for a selection of such results.

\begin{table}[h]
\begin{center}
\begin{tabular}{|c|c|c|c|c|}
\hline
    Base        & Minimal                                              & Alphabet      & Weight coefficients    & Locality \\
    $\beta$     & polynomial $m_{\beta}$                               & size $\#\A$   & set size $\#\Q$        & $p = 1 + r$ \\
\hline \hline

    $\beta = \frac{1}{2} \sqrt{17} - \frac{9}{2}$  & $x^2 + 9x + 16$   & $\#\A = 26$   & $\#\Q = 17$            & $p = 6$ \\
    \hline

	$\beta = \sqrt{5} - 5$                         & $x^2 + 10x + 20$  & $\#\A = 31$   & $\#\Q = 11$            & $p = 4$ \\
    \hline	

    $\beta = \frac{3}{2} \sqrt{5} - \frac{15}{2}$  & $x^2 + 15x + 45$  & $\#\A = 61$   & $\#\Q = 15$            & $p = 4$ \\
\hline
\end{tabular}
\caption{($1$-block) parallel addition algorithms obtained by the Extending Window Method for selected real quadratic bases, using non-integer alphabets of the minimal size $\#\A = \min \{|m_{\beta}(0)|, |m_{\beta}(1)|\}$ for parallel addition. The full look-up tables containing the weight coefficients $q_j = q_j(w_j, w_{j-1}, \ldots, w_{j-(r-1)})$, assigned to all necessary $r$-tuples of digits $w_j \in \A + \A$, are published in~\cite{JLe-results}.}
\label{tab:parallel-addition-algo_real-quadratic}
\end{center}
\end{table}

\subsection{Complex Bases}\label{sub-sec:EWM-results-complex}

It is in the area of numeration systems with complex bases that the EWM provides the most interesting new results, which would be extremely laborious (if not impossible) to achieve by manual calculation.

\subsubsection{Complex Quadratic Bases}\label{sub-sub-sec:EWM-results-complex-quadratic}

Table~\ref{tab:parallel-addition-algo_complex-quadratic} shows selected results of the EWM for numeration systems with complex quadratic bases with non-integer alphabets. These results were obtained via methods 1d and 2b in our EWM-implementation. All results, including also other methods, can be found in~\cite{JLe-results}.

\begin{table}[h]
\begin{center}
\begin{tabular}{|c|c|c|c|c|}
\hline
    Base                                                & Minimal               & Alphabet      & Weight coefficients   & Locality \\
    $\beta$                                             & polynomial $m_\beta$  & size $\#\A$   & set size $\#\Q$       & $p = 1 + r$ \\
\hline \hline

    $\beta = -\imath \sqrt{11} - 4$                         & $x^2 + 8x + 27$   & $\#\A = 36$   & $\#\Q = 13$           & $p = 8$ \\
    \hline

    $\beta = \imath \sqrt{11} - 4$                          & $x^2 + 8x + 27$   & $\#\A = 36$   & $\#\Q = 13$           & $p = 6$ \\
    \hline

    $\beta = \frac{1}{2} \imath \sqrt{11} - \frac{7}{2}$    & $x^2 + 7x + 15$   & $\#\A = 23$   & $\#\Q = 13$           &  $p = 6$ \\
    \hline

    $\beta = \frac{1}{2} \imath \sqrt{7} - \frac{1}{2}$     & $x^2 + x + 2$     & $\#\A = 4$    & $\#\Q = 29$           &  $p = 9$ \\
    \hline

    $\beta = \imath \sqrt{7} - 4$                           & $x^2 + 8x + 23$   & $\#\A = 32$   & $\#\Q = 10$           &  $p = 6$ \\
    \hline

    $\beta = -\frac{3}{2} \imath \sqrt{3} - \frac{15}{2}$   & $x^2 + 15x + 63$  & $\#\A = 79$   & $\#\Q = 13$           &  $p = 4$ \\
    \hline

    $\beta = -\frac{3}{2} \imath \sqrt{3} - \frac{9}{2}$    & $x^2 + 9x + 27$   & $\#\A = 37$   & $\#\Q = 13$           &  $p = 3$ \\
    \hline

    $\beta = \imath \sqrt{2} - 3$                           & $x^2 + 6x + 11$   & $\#\A = 18$   & $\#\Q = 15$           &  $p = 5$ \\
    \hline

    $\beta = -2\imath - 4$                                  & $x^2 + 8x + 20$   & $\#\A = 29$   & $\#\Q = 11$           &  $p = 3$ \\
    \hline

    $\beta = -3\imath - 3$                                  & $x^2 + 6x + 18$   & $\#\A = 25$   & $\#\Q = 15$           &  $p = 5$ \\
\hline
\end{tabular}
\caption{For the class of numeration systems with complex quadratic bases allowing parallel addition with non-integer alphabets, we provide here a~selection from the Extending Window Method results, where the ($1$-block) parallel addition algorithms were obtained on alphabets of minimal size $\#\A$ for parallel addition.}
\label{tab:parallel-addition-algo_complex-quadratic}
\end{center}
\end{table}

\subsubsection{Complex Bases -- Roots of Integers}\label{sub-sub-sec:EWM-results-complex-roots-of-integers}

Here we focus on bases of the type $\beta = \sqrt[\ell]{-b}$, with $\ell, b \in \NN$, $b \geq 2$.\\

For some of these bases, e.g $\beta = \imath\sqrt{2} = \sqrt[2]{-2}$ or the so-called Knuth base $\beta_K = 2\imath = \sqrt[2]{-4}$, the parallel addition algorithms were already provided in~\cite{FrPeSv2}, using non-negative integer alphabets of the minimal size. Those algorithms are given by quite simple formulas, where $q_j = q_j(w_j, w_{j-2})$ and $z_j = z_j(w_j, q_j, q_{j-2})$, so $z_j = z_j(w_j, w_{j-2}, w_{j-4})$. Such formulas are formally $5$-local functions (with memory $r = 4$), but using only every second position. Therefore, we could actually process all odd positions and all even positions separately, via just $3$-local functions (with memory $r = 2$) in each part. When using the EWM on these examples of bases, we also obtain parallel addition algorithms, even for various integer alphabets of minimal size (so not just for non-negative ones); but their formulas are not so simple, although their $p$-locality parameters are the same, i.e., $p = 5$ with memory $r = 4$. For non-integer alphabets, however, the EWM does not provide any new results:

\begin{itemize}

    \item Base $\beta = \imath \sqrt{2} = \sqrt[2]{-2}$, with minimal polynomial $m_{\beta}(X) = X^2+2$, needs the minimal alphabet size for parallel addition equal to $\#\A = 3 = \max\{2, 3\} = \max\{|m_{\beta}(0)|, |m_{\beta}(1)|\}$. The EWM acts as follows:
            \begin{itemize}
                \item On integer alphabets $\A = \{0, 1, 2\}$ or $\A = \{-1, 0, 1\}$: EWM provides parallel addition algorithms with weight coefficients $q_j = q_j (w_j, w_{j-1}, w_{j-2}, w_{j-3})$, i.e., memory $r = 4$, and consequently a $5$-local addition function $z_j = z_j (w_j, q_j, q_{j-1}) = z_j (w_j, \ldots, w_{j-4})$.
                \item On non-integer alphabets $\A = \{0, 1, 1+\imath\sqrt{2}\}$ or $\A = \{0, 1, -\imath\sqrt{2}\}$: EWM gets cycled in Phase 2, so no new parallel addition algorithm is obtained here.
            \end{itemize}

    \item Knuth base $\beta_K = 2 \imath = \sqrt[2]{-4}$, with minimal polynomial $m_{\beta}(X) = X^2+4$, has the minimal alphabet size for parallel addition $\#\A = 5 = \max\{4, 5\} = \max\{|m_{\beta}(0)|, |m_{\beta}(1)|\}$. The EWM ends up as follows:
        \begin{itemize}
            \item On integer alphabets $\A = \{0, \ldots, 4\}$ or $\A = \{-1, \ldots, 3\}$ or $\A = \{-2, \ldots, 2\}$: EWM results in parallel addition algorithms with memory $r = 4$ due to weight coefficients $q_j = q_j (w_j, w_{j-1}, w_{j-2}, w_{j-3})$, and thus $p = 5$ for $z_j = z_j(w_j, q_j, q_{j-1}) = z_j(w_j, \ldots, w_{j-4})$, i.e., a $5$-local function for the digits of the sum.
            \item On non-integer alphabets, e.g. $\A = \{0,1+2\imath, -1-2\imath, 2-2\imath, -2+2\imath\}$: no parallel addition algorithm obtained by EWM, due to several elements $b \in \A+\A$, for which there is no unique weight coefficient for inputs of the form $(b, \ldots, b)$.
        \end{itemize}

\end{itemize}

For the bases mentioned above, it is not any helpful to use the $k$-block concept -- it does not allow to decrease the alphabet size for parallel addition. The EWM correctly reports for such attempts that there are not enough representatives of congruence classes $\mod (\beta-1)$ in the alphabet.\\

Parameters of the $1$-block parallel addition algorithms for bases $\beta = \imath \sqrt{2}$ and $\beta = 2\imath$ are summarized in Table~\ref{tab:parallel-addition-algo_complex-root-of-integer}, together with another two examples: $\beta = -\imath \sqrt{7}$ and $\beta = -\imath \sqrt{11}$. The latter two are elaborated in two ways -- on alphabets of minimal and non-minimal size, respectively, via $5$-local and $3$-local functions, respectively.

\begin{table}[h]
\begin{center}
\begin{tabular}{|c|c|c|c|c|c|}
\hline
    Base            & Minimal                       & Alphabet      & Weight coefficients   & Locality      & Comment \\
    $\beta$         & polynomial $m_\beta$          & size $\#\A$   & set size $\#\Q$       & $p = 1 + r$   & \\
\hline \hline

	$\beta = -\imath \sqrt{11}$    & $x^2 + 11$    & $\#\A = 13$    & $\#\Q = 9$            & $p = 3$      & non-minimal $\#\A$\\
    \cline{3-6}
	                               &               & $\#\A = 12$    & $\#\Q = 9$            & $p = 5$      & minimal $\#\A$ \\
    \hline \hline
    	
    $\beta = -\imath \sqrt{7}$     & $x^2 + 7$     & $\#\A = 9$     & $\#\Q = 9$             & $p = 3$      & non-minimal $\#\A$\\
    \cline{3-6}
                                   &               & $\#\A = 8$     & $\#\Q = 9$             & $p = 5$      & minimal $\#\A$ \\
    \hline \hline

    $\beta = \imath \sqrt{2}$      & $x^2 + 2$     & $\#\A = 3$     & $\#\Q = 9$             & $p = 5$      & minimal $\#\A$ \\
    \hline \hline

     $\beta = 2\imath$             & $x^2 + 4$     & $\#\A = 5$     & $\#\Q = 9$             & $p = 5$      & minimal $\#\A$ \\
\hline
\end{tabular}
\caption{($1$-block) parallel addition algorithms for complex bases of the type $\beta = \sqrt[\ell]{-b}$, with $\ell, b \in \NN$, allowing parallel addition with integer alphabets. On some of the bases, we illustrate how the higher (than minimal) number of elements in alphabet $\#\A$ helps to decrease the $p$-locality of the parallel addition function.}
\label{tab:parallel-addition-algo_complex-root-of-integer}
\end{center}
\end{table}

\subsubsection{Canonical Number Systems}\label{sub-sub-sec:Canonical-Number-Systems}

In the sequel, we investigate in detail another two bases of the type $\beta = \sqrt[\ell]{-b}$, $b \in \NN$:
\begin{itemize}
    \item Penney base $\beta_P = -1 + \imath = \sqrt[4]{-4}$: studied in Section~\ref{sub-sec:EWM-results-Penney};
    \item Eisenstein base $\beta_E = -1 + \exp{\frac{2\pi\imath}{3}} = \frac{-3+\imath\sqrt{3}}{2} = \sqrt[6]{-27}$: studied in Section~\ref{sub-sec:EWM-results-Eisenstein}.
\end{itemize}

For these bases, the EWM provides a rich set of results -- not only regarding $1$-block parallel addition, but using also the $k$-block concept. Here we exploit the fact that both these bases form Canonical Number Systems (CNS -- as studied in~\cite{Kovacs, Kovacs+Petho}) and a~result from~\cite{FrHePeSv}, summarized in the following Remark:

\begin{remark}\label{rem:CNS_k-block}
An algebraic number~$\beta$ and the alphabet $\C = \{ 0, 1, \ldots, |N(\beta)|-1 \}$, where $N(\beta)$ is the norm of~$\beta$ over~$\QQ$, form a~\emph{Canonical Number System (CNS)}, if any element $X$ of the ring of integers $\ZZ[\beta]$ has a unique representation in the form $X = \sum_{k=0}^n x_k \beta^k$, where $x_k \in \C$. In a CNS, block parallel addition is possible on the alphabet $\A = \{ 0, 1, \ldots, 2|N(\beta)|-2 \}$ or on the alphabet $\A = \{-|N(\beta)|+1, \ldots, 0, \ldots, |N(\beta)|-1 \}$.
\end{remark}

Application of this result on Penney and Eisenstein bases means:
\begin{itemize}
    \item the norm of Penney base is $N(\beta_P) = 2$, so $(\beta_P, \C_P)$ with $\C_P = \{0, 1\}$ is a CNS -- therefore, block parallel addition in base $\beta_P$ is possible on alphabets $\A_P = \{ -1, 0, 1 \}$ or $\A_P = \{ 0, 1, 2 \}$;
    \item the norm of Eisenstein base is $N(\beta_E) = 3$, so $(\beta_E, \C_E)$ with $\C_E = \{ 0, 1, 2\}$ is a CNS -- thus block parallel addition in base $\beta_E$ is possible on alphabets $\A_E = \{ -2, -1, 0, 1, 2\}$ or $\A_E = \{ 0, 1, 2, 3, 4 \}$.
\end{itemize}

But the length~$k$ of blocks for the $k$-block parallel addition is not given in Remark~\ref{rem:CNS_k-block} -- so we test it via the EWM, proceeding simply from $k = 2, 3, \ldots$ upwards. The EWM is successful, and generates the algorithms of block parallel addition for both Penney and Eisenstein numeration systems, with $k = 2$ and $k = 3$, respectively. The alphabets $\A_{P_2}$ and $\A_{E_3}$, of sizes $\A_{P_2} = 3$ and $\A_{E_3} = 5$, in these algorithms are exactly those as predicted by the Remark~\ref{rem:CNS_k-block}; both the symmetric and the non-negative sets of consecutive integers. In both cases, the $k$-block concept helps to decrease the size of alphabets for parallel addition by two digits.

\begin{table}[h]
\begin{center}
\begin{tabular}{|c|c|c|c|c|l|p{6cm}|}
\hline
    Base $\beta$    &         Alphabet        &       & Block     & Locality      &  \\
   Minimal polynomial $m_\beta$          &  $\A$                  & $\#\A$   & length    & $p = 1 + r$   & Comment\\
\hline \hline

    \textbf{Penney}     & $\{ 0, \pm 1, \pm \imath \}$  & $5$    & $k=1$     & $p=7$ & minimal $\#\A$ \\
    \cline{2-6}
    $\beta = 1-\imath$  & $\{ 0, \pm 1 \}$              & \multirow{2}{*}{$3$}    & \multirow{2}{*}{$k=2$}     & \multirow{2}{*}{$p=6$} & \multirow{2}{*}{$\#\A$ proposed in \cite{Kovacs+Petho}} \\
    $m_{\beta}=X^2+2X+2$ & $\{ 0, 1, 2 \}$              &        &           &       & \\
\hline \hline

    \textbf{Eisenstein} &  $\{ 0, \pm 1, \pm \omega, \pm \omega^2 \}$                & $7$   & $k=1$ & $p=4$ & minimal $\#\A$ \\
    \cline{2-6}
    $\beta = 1-\omega, \omega = \exp{\frac{2\pi\imath}{3}}$  & \multirow{2}{*}{$\{ 0, 1, \pm \omega, \omega^2 \}$} & \multirow{2}{*}{$5$}   & \multirow{2}{*}{$k=3$} & \multirow{2}{*}{$p=3$} & \multirow{2}{*}{$\#\A$ proposed in \cite{Kovacs+Petho}} \\
    $m_{\beta}=X^2+3X+3$ & & & & &\\
\hline
\end{tabular}
\caption{Parallel addition algorithms obtained by the Extending Window Method for the Penney and Eisenstein (complex) numeration systems with non-integer alphabets are new results; in both $1$-block and $k$-block variants. Detailed comments on these results are elaborated in Sections~\ref{sub-sec:EWM-results-Penney} and~\ref{sub-sec:EWM-results-Eisenstein}. The full look-up tables containing the weight coefficients $q_j = q_j(w_j, w_{j-1}, \ldots, w_{j-r+1})$, assigned to all necessary $r$-tuples of digits $w_j \in \A + \A$, are published in~\cite{JLe-results}.}
\label{tab:parallel-addition-algo_Penney-Eisenstein}
\end{center}
\end{table}

\subsection{Penney Numeration Systems}\label{sub-sec:EWM-results-Penney}

In 1960's, W.~Penney~\cite{Penney} proposed to use the complex base $\beta_P = -1 + \imath$ with the alphabet $\C_P = \{0, 1\}$. Such numeration system $(-1+\imath, \{0, 1\})$ can represent any complex number $X \in \CC$ as $X = (x_n \ldots x_1 x_0 \bullet x_{-1} \ldots)_{-1+\imath}$, with $x_j \in \{0, 1\}$. The Penney base $\beta_P = -1+\imath$ is an algebraic integer, with minimal polynomial $m_{\beta_P}(X) = X^2 + 2X + 2$. So the minimum size of alphabets $\A_P$ allowing ($1$-block) parallel addition in base $\beta_P$, according to Theorem~\ref{thm:minimal-alphabet-size}, is limited by $ \# \A_P \geq \max \{ |m_{\beta_P}(0)|, |m_{\beta_P-1}(0)| \} = \max \{ 2, 5 \} = 5 \, . $ For $k$-block parallel addition, the alphabet size of~$3$ is suggested in Remark~\ref{rem:CNS_k-block}.

\subsubsection{Penney Base with Integer Alphabet: $1$-block}

We consider $5$-digit integer alphabets, e.g. non-negative $\A = \{0, \dots, 4\}$ or symmetric $\A = \{-2, \dots, 2\}$. The EWM does not work here: although Phase 1 finds successfully the weight coefficient sets $\Q$, there are several digits $b \in \A + \A$ not passing the one-letter-input part of Phase 2 in the EWM algorithm.

But we can obtain the parallel addition algorithms due to the fact that $\beta_P^4 = -4$, and using the results obtained earlier for the negative integer base $\gamma = -4$. The modification of algorithm from base $\gamma = -4$ to $\beta_P = -1+\imath = \sqrt[4]{-4}$, with the same alphabet $\A$, is quite simple. We perform conversion of digits from $w_j \in \A+\A$ to $z_j \in \A$, by means of the same weight coefficients $q_j \in \Q$, as follows:
\begin{eqnarray*}
    \mathrm{for} \ (\gamma, \A): & \qquad z_j := w_j + q_{j-1} - q_j \gamma  = w_j + q_{j-1} + 4 q_j \qquad \ \mathrm{where} \quad q_j = q_j (w_j, w_{j-1}) \, ; \\
    \mathrm{for} \ (\beta_P, \A):  & \qquad z_j := w_j + q_{j-4} - q_j \beta_P^4 = w_j + q_{j-4} + 4 q_j \qquad \mathrm{where} \quad q_j = q_j (w_j, w_{j-4}) \,.
\end{eqnarray*}
While in $(-4, \A)$, we had $z_j = z_j (w_j, q_j, q_{j-1}) = z_j (w_j, w_{j-1}, w_{j-2})$, and so parallel addition was a $3$-local function, now in $(-1+\imath, \A)$ we obtain $z_j = z_j (w_j, q_j, q_{j-4}) = z_j (w_j, w_{j-4}, w_{j-8})$, and thus $9$-local function of parallel addition, with memory $r=8$ and anticipation $t=0$.

For the example of symmetric alphabet $\A = \{0, \pm 1, \pm 2\}$, the weight function $q_j : (\A+\A)^2 \rightarrow \{0, \pm 1\} = \Q$ for parallel addition in $(\beta_P, \A)$ has the following form:
\begin{eqnarray*}
    q_j = +1    & \mathrm{for} & (w_j \leq -3) \quad \mathrm{or} \quad (w_j = -2 \ \ \mathrm{and} \ \ w_{j-4} \geq +2) \\
    q_j = -1    & \mathrm{for} & (w_j \geq +3) \quad \mathrm{or} \quad (w_j = +2 \ \ \mathrm{and} \ \ w_{j-4} \leq -2) \\
    q_j = 0 \ \ &              & \mathrm{otherwise} \, .
\end{eqnarray*}
So here we have example of numeration systems $(\beta_P, \A)$ where parallel addition algorithms exist, but the EWM it not able to find them, although its prerequisites are fulfilled.

\subsubsection{Penney Base with Complex Alphabet: $1$-block}

Using the fact that $\pm \imath \equiv_{-2+\imath} \pm 2$ and the Penney base $\beta_P$ fulfils $\beta_P-1 = -2 + \imath$, we can move from the integer alphabet $\{0, \pm 1, \pm 2\}$ to the complex alphabet $\A_P = \{ 0, \pm 1, \pm \imath \}$, still containing representatives of all $5$~congruence classes modulo $\beta_P - 1$, and, moreover, $\A_P \cdot \A_P = \A_P$, i.e., $\A_P$ is closed under multiplication.\\

Application of EWM on numeration system $(-1 + \imath,\{0, \pm 1, \pm \imath\})$ is successful -- provides an algorithm of parallel addition with memory $r=6$ and anticipation $t=0$, so $7$-local function, using the weight coefficient set of size $\# \Q = 45$, as follows:
\begin{equation*}
    z_j := w_j + q_{j-1} - q_j \beta_P = w_j + q_{j-1} - (\imath - 1) q_j  \, , 
\end{equation*}
where $q_j = q_j (w_j, \ldots, w_{j-5})$. Hence, $z_j = z_j (w_j, q_j, q_{j-1}) = z_j (w_j, \ldots, w_{j-6})$.

Having an alphabet $\A_P$ closed under multiplication can be advantageous for efficiency of various arithmetic operations (e.g. multiplication and division), but for the parallel addition itself, this alphabet requires a very large size of the look-up table describing the weight coefficient function $q_j$. The weight function has six arguments, so the maximum size of the look-up table could be $\# (\A_P+\A_P)^6 = 13^6 = 4\,826\,809$. The algorithm obtained by EWM results in the look-up table for $q_j$ of somewhat smaller size $2\,165\,713$ -- but still, rather disadvantageous.

\subsubsection{Penney Base with Integer Alphabet: $2$-block}

The EWM algorithm is successful also in search for parallel addition algorithm in the Penney base with an integer alphabet, using the $k$-block concept for $k = 2$. In this case, it is sufficient to take a (symmetric) alphabet of three digits only, as suggested in Remark~\ref{rem:CNS_k-block} for the~CNS.\\

In the $2$-block concept, we actually regard the $(\beta_P, \A_{P_2})$-representation of a number $x \in \CC$ as a (half-length) representation in a transformed numeration system $(\widetilde{\beta_P}, \widetilde{\A_{P_2}})$:
\begin{align*}\label{Penney_integer_2-block}
    \beta_P &= \imath-1    &   &\rightarrow &   \widetilde{\beta_P} &= \beta_P^2 = -2\imath \\
    \A_{P_2} &= \{0, \pm 1\}\!\!   &   &\rightarrow &   \widetilde{\A_{P_2}} &= \{ \beta_P a_1 + a_0 \, \vert \, a_j \in \A_{P_2} \} = \{ (\imath-1) a_1 + a_0 \, \vert
    \, a_j \in \{0, \pm 1\} \} \,, \, \# \widetilde{\A_{P_2}} = 9 \\
    x_j &\in \A_{P_2}          &   &\rightarrow &   x &= ( x_{2n+1} x_{2n} \ldots x_1 x_0 \bullet x_{-1} x_{-2} \ldots )_{\beta_P} = \sum_{j=-\infty}^{2n+1} x_j \beta_P^j  \\
    & & & & &=\sum_{l=-\infty}^n (\beta_P x_{2l+1} + x_{2l}) (\beta_P^2)^{l} \\
    \widetilde{x}_l &\in \widetilde{\A_{P_2}}  &   &\rightarrow &   x &= ( \widetilde{x}_{n} \ldots \widetilde{x}_0 \bullet \widetilde{x}_{-1} \ldots)_{\widetilde{\beta_P}} = \sum_{l=-\infty}^n \widetilde{x}_{l} (\widetilde{\beta_P})^{l} \quad \mathrm{with} \quad \widetilde{x}_l = \beta_P x_{2l+1} + x_{2l}\,.
\end{align*}

Application of the EWM algorithm on numeration system $(\widetilde{\beta_P}, \widetilde{\A_{P_2}})$ provides the following result:
\begin{itemize}
    \item weight coefficient function $\widetilde{q}: (\widetilde{\A_{P_2}}+\widetilde{\A_{P_2}})^5 \rightarrow \widetilde{\Q}$, with $\widetilde{q}_j = \widetilde{q}_j (\widetilde{w}_j, \ldots, \widetilde{w}_{j-4}) \in \widetilde{\Q} \subset \Zbeta$, described by $60\,721$ distinct input combinations $(\widetilde{w}_j, \ldots, \widetilde{w}_{j-4})$;
    \item parallel addition $\widetilde{z}_j := \widetilde{w}_j + \widetilde{q}_{j-1} - \widetilde{q}_j \widetilde{\beta_P}$ is then a $6$-local function, with memory $r = 5$ and anticipation $t = 0$, with respect to the numeration system $(\widetilde{\beta_P}, \widetilde{\A_{P_2}})$;
    \item this means, in view of the original numeration system $(\beta_P, \A_{P_2})$, a $12$-local function, with parameters of memory and anticipation equal to $r = 10$, $t = 1$ for even positions, and $r = 11$, $t = 0$ for odd positions.
\end{itemize}

So the alphabet $\A_{P_2}$ for $2$-block parallel addition could be limited to just three elements $\{0, \pm 1\}$, and also the look-up table (of $60\,721$ items) describing the weight coefficient function is smaller than in the $1$-block case. Similar results as on the symmetric alphabet $\A_{P_2} = \{ -1, 0, 1\}$ are obtained by EWM also for the non-negative alphabet $\A_{P_2} = \{ 0, 1, 2\}$; the latter result has the same $p$-locality, but larger look-up table describing the weight coefficient function on $114\,481$ distinct input combinations $(\widetilde{w}_j, \ldots, \widetilde{w}_{j-4})$.

\subsubsection{Penney Base with Complex Alphabet: $2$-block}

Attempts to apply the EWM method on non-integer alphabets $\A$ of size $3$, containing $\{0, 1\}$, were not successful with the $2$-block approach. As the third element of the alphabet $\A$, all of the five remaining elements of $\Zbeta$ on the unit circle were tried: $ -1, \pm \imath, \imath $; but they all have failed, in various steps of the EWM algorithm.

\subsection{Eisenstein Numeration Systems}\label{sub-sec:EWM-results-Eisenstein}

Let us denote by $\omega$ the third root of unity $\omega = \exp{\tfrac{2 \pi \imath}{3}} = \tfrac{-1 + \imath \sqrt{3}}{2}$, an algebraic integer with minimal polynomial $m_\omega (X) = X^2 + X + 1$. Eisenstein base is the complex number $\beta_E = -1 + \omega = \tfrac{-3 + \imath \sqrt{3}}{2}$, an algebraic integer with minimal polynomial $m_{\beta_E}(X) = X^2 + 3X + 3$. This number $\beta_E$ generates the set $\ZZ [\beta_E]$ of so-called \emph{Eisenstein integers} of the form:
\begin{equation}\label{Eisenstein_integers}
    \ZZ [\beta_E] = \left\{ \left. \sum_{k=0}^n a_k \beta_E^k \, \right| \, n \in \ZZ^{+}, a_k \in \ZZ \right\} = \left\{ a_1 \beta_E + a_0 \, | \, a_k \in \ZZ \right\} \quad \mathrm{with} \quad \beta_E = \tfrac{-3 + \imath \sqrt{3}}{2} \, .
\end{equation}

Again, we derive the minimum size of alphabets~$\A_E$ (integer or complex) allowing $1$-block parallel addition in this base, due to Theorem~\ref{thm:minimal-alphabet-size}, as $ \# \A_E \geq \max \{ |m_{\beta_E}(0)|, |m_{\beta_E-1}(0)| \} = \max \{ 3, 7 \} = 7 \, . $ For $k$-block parallel addition, the alphabet size of~$5$ is suggested in Remark~\ref{rem:CNS_k-block}.

\subsubsection{Eisenstein Base with Integer Alphabet: $1$-block}

We consider $7$-digit integer alphabets, non-negative $\A = \{0, \ldots, 6\}$ or symmetric $\A = \{-3, \ldots, 3\}$, and apply the~EWM on such numeration systems -- but unsuccessfully. Phase~1 does find the weight coefficient sets~$\Q$, but there are several digits $b \in \A + \A$ not passing the one-letter-input part of Phase~2 in the~EWM.

Parallel addition algorithms for the Eisenstein base $\beta_E$ with the mentioned integer alphabets~$\A$ could be found manually, using the rewriting rule $X^2 + 3X + 3$, but with quite complex weight function; and the resulting parallel addition function would be $9$-local.

\subsubsection{Eisenstein Base with Complex Alphabet: $1$-block}

We have applied the EWM successfully on the following alphabet $\A_E \subset \ZZ [\beta_E]$ of the minimal size:
\begin{equation}\label{Eisenstein_alphabet}
    \A_E = \{ 0, \pm 1, \pm \omega, \pm \omega^2 \} = \{ 0, \pm 1, \pm \omega, \pm (\omega + 1) \} \qquad \text{with} \qquad \# \A_E = 7 \, .
\end{equation}

This alphabet~$\A_E$ is non-integer, but it has many advantages: besides fitting into the minimal $7$-digit size for parallel addition, it is centrally symmetric ($-\A_E = \A_E$), and also closed under multiplication ($\A_E \cdot \A_E = \A_E$).

The EWM provides parallel addition algorithm in numeration system $(\beta_E, \A_E)$, as follows:
\begin{itemize}
    \item the weight coefficients set is, by coincidence, equal to $\Q_E = \A_E + \A_E$ of size $\# \Q_E = 19$;
    \item the memory parameter $r = 3$, i.e., each weight coefficient $q_j \in \Q_E$ depends on (at most) $3$~digits of the representation of $w = w_n \ldots w_1 w_0 \decdot w_{-1} \ldots w_{-m} \in (\A_E + \A_E)^*$:
        \begin{equation*}
            q_j = q_j (w_j, w_{j-1}, w_{j-2}) \in \Q_E \qquad \text{such \ that} \qquad z_j = w_j + q_{j-1} - q_j \beta_E \in \A_E \, ;
        \end{equation*}
    \item thus $p = r+1 = 4$, i.e., the parallel addition is a $4$-local function:
        \begin{equation*}
            z_j = z_j (w_j, q_{j-1} (w_{j-1}, w_{j-2}, w_{j-3}), q_j (w_j, w_{j-1}, w_{j-2})) = z_j (w_j, \ldots, w_{j-3}) \in \A_E \, .
        \end{equation*}
\end{itemize}

The memory $r = 3$ means that description of the weight coefficient function $q_j: (\A_E + \A_E)^3 \rightarrow \Q_E$ requires to provide the values $q_j = q_j (w_j, w_{j-1}, w_{j-2})$ for up to $19^3 = 6\,859$ combinations of triplets $(w_j, w_{j-1}, w_{j-2}) \in (\A_E + \A_E)^3$. This look-up table can be economized by making use of the $6$-fold rotation symmetry of the sets $\A_E$, $\A_E + \A_E$, $\Q_E$. Let us denote:
\begin{equation*}
    \R := \left\{ \left. \pm \omega^k \, \right| \, k \in \ZZ \right\} = \left\{ \pm 1, \pm \omega, \pm \omega^2 \right\} \, .
\end{equation*}

Then, for any element $\rho \in \R$, we have $\rho \A_E = \A_E$, $\rho (\A_E + \A_E) = (\A_E + \A_E)$, $\rho \Q_E = \Q_E$, and, consequently, for any set of digits $w_j, w_{j-1}, w_{j-2}, w_{j-3} \in \A_E + \A_E$:
\begin{align}\label{q_j_rotated}
    q_j (\rho w_j, \rho w_{j-1}, \rho w_{j-2}) & =  \rho q_j (w_j, w_{j-1}, w_{j-2}) \, , \nonumber \\
    \nonumber
    z_j (\rho w_j, \rho w_{j-1}, \rho w_{j-2}, \rho w_{j-3}) & =  \rho w_j + q_{j-1} (\rho w_{j-1}, \rho w_{j-2}, \rho w_{j-3}) - q_j (\rho w_j, \rho w_{j-1}, \rho w_{j-2}) \beta_E  \\
     & =  \rho w_j + \rho q_{j-1} (w_{j-1}, w_{j-2}, w_{j-3}) - \rho q_j (w_j, w_{j-1}, w_{j-2}) \beta_E  \\
     \nonumber
     & =  \rho (w_j + q_{j-1} (w_{j-1}, w_{j-2}, w_{j-3}) - q_j (w_j, w_{j-1}, w_{j-2}) \beta_E)  \\
     \nonumber
     & =  \rho z_j (w_j, w_{j-1}, w_{j-2}, w_{j-3}) \, .
\end{align}


\begin{figure}[h]
    \centering
    \includegraphics[scale=0.5]{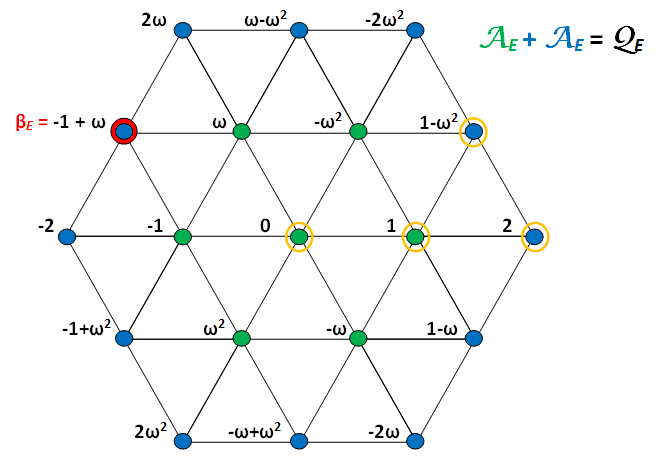}
    \caption{To describe the algorithm of parallel addition in the complex Eisenstein numeration system $(\beta_E, \A_E)$, it is sufficient to provide the weight coefficients $q_j = q_j (w_j, w_{j-1}, w_{j-2})$ for just those triplets $(w_j, w_{j-1}, w_{j-2}) \in (\A_E + \A_E)^3$ starting with $w_j \in \{ 0, 1, 2, 1 - \omega^2 \}$. The remaining weight coefficients are then obtained by using the equation \eqref{q_j_rotated} for all $\rho \in \R$.}
    \label{Eisenstein_complex_1-block_picture}
\end{figure}


\subsubsection{Eisenstein Base with Integer Alphabet: $3$-block}

With help of the EWM, we find out that the $k$-block concept with $k = 3$ helps to decrease the alphabet size for parallel addition in Eisenstein base with integer alphabets, from $\A_E = 7$ down to $\A_{E_3} = 5$. This result is obtained for the symmetric case $\A_{E_3} = \{ -2, \ldots, 2 \}$ as well as for the non-negative case $\A_{E_3} = \{ 0, \ldots, 4 \}$.

\subsubsection{Eisenstein Base with Complex Alphabet: $3$-block}

In the tested cases below, the alphabets~$\A$ are selected as subsets of $\{ 0, \pm 1, \pm \omega, \pm \omega^2 \}$.

\begin{itemize}

    \item {\bf $2$-block function on $4$-digit alphabets}: All cases pass successfully via Phase~1, but then fail in Phase~2. There are always (quite many) elements $b \in \widetilde{\A}_E + \widetilde{\A}_E$ which do no pass the $\Q[b^m]$-test.

    \item {\bf $3$-block function on $4$-digit alphabets}: All cases fail already in Phase~1; due to the fact that (quite many) representatives of the congruence classes $\mod (\widetilde{\beta} - 1)$ are missing in the alphabet $\widetilde{\A}_E$.

    \item {\bf $2$-block function on $5$-digit alphabets}: All cases pass successfully via Phase~1, but then fail in Phase~2. Already the $\Q[b^m]$-test of Phase~2 is never passed successfully, although in some cases for one element $b \in \widetilde{\A}_E + \widetilde{\A}_E$ only.

    \item {\bf $3$-block function on $5$-digit alphabets}: Here we obtain a successful result. There are 15 possible combinations of 5-digit subsets in $\A_E$ (containing $0$), and for 9 of them (depicted on Figure~\ref{Eisenstein_complex_3-block_5-digit_picture}), parallel addition can be performed via a 3-block $p$-local function. The memory parameter $r$ ranges from 2 to 4; and, with the shortest possible memory $r = 2$, we have:

        \begin{itemize}
            \item $p = r + 1 = 3$, and thus a $3$-block $3$-local addition function $\varphi : (\A_{E3} + \A_{E3})^9 \rightarrow \A_{E3}$;
            \item using one of two alphabets $\pm \A_{E3} = \pm \{ 0, 1, \omega, -\omega, -\omega^2 \}$;
            \item with the $3$-block set $\widetilde{\A_{E3}} = \{ a_2 \beta^2 + a_1 \beta + a_0 \, | \, a_j \in \A_{E3} \}$ of size $\# \widetilde{\A_{E3}} = 72$;
            \item base $\beta_{E3} = \beta_E^3 = (\omega -1)^3 = 6\omega + 3$;
            \item weight coefficients set equal to (or a subset of) $\Q_E = \A_E + \A_E$ of size $\#\Q = 19$;
            \item and the input alphabet $\A_{E3} + \A_{E3}$ of size $\# (\A_{E3} + \A_{E3}) = 280$.
        \end{itemize}

        A full description of the weight coefficient function would comprise $(\# (\A_{E3} + \A_{E3}))^r = 280^2 = 78\,400$ values. This maximum number is diminished to $161 + 33\,320 = 161 + 119*280$, since for $161$ input digits, the weight coefficient depends on just one position ($q_j = q_j (w_j)$), and only for the remaining $119$ input digits, we have to consider also their neighbour to the right ($q_j = q_j (w_j,  w_{j-1})$).

\end{itemize}

\begin{figure}[h]
    \centering
    \includegraphics[scale=0.5]{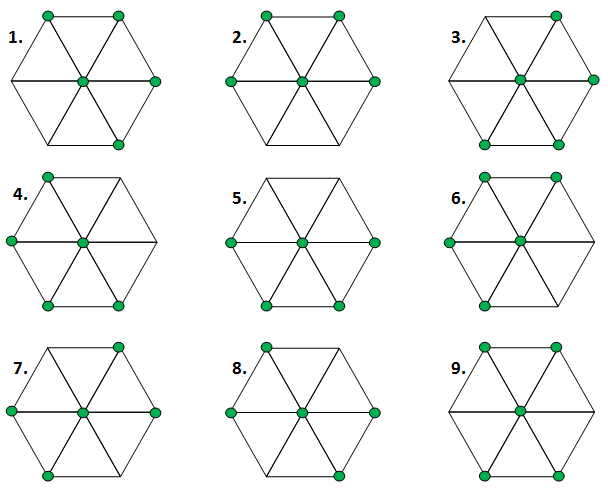}
    \caption{Here we depict the~9 subsets with~5 digits (including~$0$) in~$\A_E$, allowing parallel addition via a 3-block local function. Variants no.~1. and~4. have the minimal memory $r = 2$ of the parallel addition function.}
    \label{Eisenstein_complex_3-block_5-digit_picture}
\end{figure}

\section{Conclusions}\label{sub-sec:conclusions}

The (automated) Extending Window Method (EWM) for construction of parallel addition algorithms, as proposed and elaborated in this work, can be regarded in fact as generalization of the intuitive (manual) methods used by other authors earlier. For instance, the~EWM delivers the same parallel addition algorithms for integer bases as found previously by \cite{Avizienis}, \cite{ChowRobertson}, \cite{Parhami}, or \cite{FrPeSv2}, as illustrated in Table~\ref{tab:parallel-addition-algo_integer-bases}.

When considering certain classes of real or complex bases (quadratic, or $\ell$-th roots of integers), the EWM does provide parallel addition algorithms on (integer or non-integer) alphabets of minimal size, however, some of the underlying local functions are rather complex, even more than in some of the algorithms found manually earlier. See selected examples of real bases in Tables~\ref{tab:parallel-addition-algo_real-root-of-integer} and~\ref{tab:parallel-addition-algo_real-quadratic}, and of complex bases in Tables~\ref{tab:parallel-addition-algo_complex-quadratic} and~\ref{tab:parallel-addition-algo_complex-root-of-integer}.

In the case of complex bases and non-integer alphabets, manual search for the parallel addition algorithms is extremely laborious (if possible at all); but the~EWM could do that successfully (including block parallel addition as well), e.g. for the Penney and Eisenstein bases, samples of the Canonical Numeration Systems, as summarized in Table~\ref{tab:parallel-addition-algo_Penney-Eisenstein}.\\

Open problems remaining for future analysis regarding the Extending Window Method for construction of parallel addition algorithms are mainly the following:
\begin{itemize}
    \item Generalization of the Extending Window Method implementation using an arbitrary rewriting rule, instead of just $(X - \beta)$. This may be useful especially for numeration systems $(\beta, \A)$ with non-integer base $\beta \in \CC \setminus \ZZ$ and integer alphabet $\A \subset \ZZ$.
    \item Answering the open question of Phase 2 convergence within the Extending Window Method. This would require deeper analysis of underlying mathematical structures for the various methods considered to select the weight coefficients while iterating within Phase 2.
\end{itemize}

\section*{Acknowledgements}

The authors acknowledge financial support from the Czech Science Foundation (grant GA\v{C}R 13-03538S) and from the Czech Technical University in Prague (grant SGS 17/193/OHK4/3T/14).


\bibliographystyle{plain}

\bibliography{biblioEWM}


\end{document}